\crefname{problem}{problem}{problems}
\Crefname{problem}{Problem}{Problems}
\newtheorem{theorem}{Theorem}[subsection]
\newtheorem{proposition}[theorem]{Proposition}
\newtheorem{lemma}[theorem]{Lemma}
\newtheorem{definition}[theorem]{Definition}
\newtheorem{corollary}{Corollary}[theorem]
\newtheorem{example}[theorem]{Example}
\newtheorem{remark}[theorem]{Remark}
\newcommand{\intpart}[1]{\lfloor #1 \rfloor}
\newcommand{\fracpart}[1]{\{ #1 \}}
\newcommand{\N}{\mathbb{N}}
\newcommand{\R}{\mathbb{R}}
\newcommand{\Z}{\mathbb{Z}}
\newcommand{\C}{\mathbb{C}}
\newcommand{\cont}{\mathcal{C}}
\newcommand{\fov}{[-\frac{1}{2}, \frac{1}{2})}
\newcommand{\fcoef}[1]{e^{-2 \pi i #1}}
\newcommand{\ifcoef}[1]{e^{2 \pi i #1}}
\newcommand{\Pin}{\Pi_{\not = 0}}
\newcommand{\st}{\text{ $|$ }}
\begin{document}

\title{\textbf{Optimization of gridding algorithms for FFT by vector optimization}}

\author{Federico Achini$^{1, *}$, Paola Causin$^1$, Sara Vanini$^2$, Ke Chen$^3$ and Simone Scacchi$^1$ \\
\small
$^1$\textit{Department of Mathematics, Università degli Studi di Milano, Milano, Italy}, \\
\small
$^2$\textit{R\&D Department, Revenio Oyj - Centervue, Padova, Italy}, \\
\small
$^3$\textit{Department of Mathematics, University of Strathclyde, Glasgow, UK} \\
\footnotesize
$^*$federico.achini@unimi.it
}
\date{}

\maketitle

\begin{abstract}
The Fast Fourier Transform (FFT) is widely used in applications such as MRI, CT, and interferometry; however, because of its dependence on uniformly sampled data, it requires the use of gridding techniques for practical implementation. The performance of these algorithms strongly depends on the choice of the gridding kernel, with the first prolate spheroidal wave function (PSWF) regarded as optimal. This work redefines kernel optimality through the lens of vector optimization (VO), introducing a rigorous framework that characterizes optimal kernels as Pareto-efficient solutions of an error shape operator. We establish the continuity of such operator, study the existence of solutions, and propose a novel methodology to construct kernels tailored to a desired target error function. The approach is implemented numerically via interior-point optimization. Comparative experiments demonstrate that the proposed kernels outperform both the PSWF and the state-of-the-art methods (MIRT-NUFFT) in specific regions of interest, achieving orders-of-magnitude improvements in mean absolute errors. These results confirm the potential of VO-based kernel design to provide customized accuracy profiles aligned with application-specific requirements. Future research will extend this framework to multidimensional cases and relative error minimization, with potential integration of machine learning for adaptive target error selection.

\null

\noindent\textbf{Keywords}: gridding algorithm, FFT, vector optimization, Pareto front, uncertainty principle
\end{abstract}

\section{Introduction}
\label{sec:intro}

\subsection{Background and related works}

The Fast Fourier Transform (FFT) is a fundamental algorithm that, since its introduction by Cooley and Tukey in 1965 \cite{cooley_algorithm_1965, Brigham_FFT}, has proven to be a cornerstone in numerous applications such as MRI \cite{ansorge_physics_2016, yang_mean_2014, Fessler_minmaxNUFFT}, interferometry \cite{ye_optimal_2019, schwab_optimal_1980, brouw_aperture_1975}, and CT \cite{osullivan_fast_1985, lewitt_reconstruction_1983}, to name just a few. The FFT enables the computation of the Discrete Fourier Transform (DFT) of a discrete signal of length $N$ in only $O(N \log N)$ time, in contrast to the $O(N^2)$ complexity required by straightforward matrix-vector multiplication \cite{cooley_algorithm_1965}.  

An important assumption for the application of the FFT is that the signal has been \emph{sampled uniformly}. However, in real-world scenarios this is rarely the case, due to device imperfections or specific features of the underlying physical system (see, for example, \cite{li_compensation_2023,hogg_synthesis_1969,gallagher_introduction_2008}). Therefore, directly applying the FFT on nonuniformly sampled signals yields inaccurate or noisy results \cite{Dutt_NUFFT, Fessler_minmaxNUFFT, yang_mean_2014}. To address this limitation, signals are first approximated on a uniform grid using a gridding technique \cite{Fessler_minmaxNUFFT, osullivan_fast_1985, jackson_selection_1991}; that is, the data are interpolated onto a regular grid via convolution with a predefined kernel (the so-called \emph{gridding kernel}). Such gridding kernel is assumed to have a small support, to ensure that the convolution can be fast computed (see \Cref{ch:Definition of the gridding algorithm}). Finally, the spectrum thus obtained is corrected by multiplication with a compensation function (sometimes called \emph{apodization correction} or \emph{deapodization} \cite{chan_selection_2011, song_least-square_2009}) to improve the reconstruction accuracy. This compensation function is introduced to correct errors that come from the interpolation step, and its choice depends on the gridding kernel.  

The selection of the optimal gridding kernel and deapodization function has been the subject of extensive research \cite{schwab_optimal_1980, ye_optimal_2019, schomberg_gridding_1995, Fessler_minmaxNUFFT}. The first prolate spheroidal wave function (PSWF) of zeroth order \cite{slepian_prolate_1961, landau_prolate_1961, slepian_prolate_1978} is frequently cited as optimal due to its unique property of being the bandlimited function with maximum energy concentration in the time domain~\cite{schwab_optimal_1980, brouw_aperture_1975}. This property helps suppress image aliasing and artifacts in the reconstruction. Direct computation of the first PSWF is challenging; for this reason, it is usually approximated by the Kaiser–Bessel function, which is more practical and efficient. 

A lot of work has been done to systematically find gridding kernels and compensation functions with good performance. In particular, Fessler \cite{Fessler_minmaxNUFFT} analysed in detail the error produced by gridding algorithms. A numerical procedure to generate optimal kernels and deapodization functions was described, parametrized by the size of the support of the gridding kernel and the oversampling factor chosen for the interpolation. The key idea was based on min-max optimization of the error, resulting in the gridding kernel and deapodization function minimizing the worst-case scenario. In fact, the proposed optimization involves the minimization of the error sup-norm; thus, it aims to find the gridding kernel that is \textit{globally optimal}. The same technique was also used in the same paper to find the optimal parameters for the Kaiser-Bessel kernel. Moreover, in subsequent papers by other authors, it was adapted to the $L^2$ norm \cite{jacob_optimized_2009,yang_mean_2014}. 
    
The above approach has been questioned in scenarios where the goal is to have a very high accuracy only on a specific portion of the spectrum \cite{cao_locally_1995}. In \cite{ye_optimal_2019}, the authors propose an optimization procedure to determine the optimal gridding kernel for a restricted region of the field of view, demonstrating improved performance compared to the PSWF. The key idea is to minimize the approximation error introduced by the gridding+FFT algorithm specifically within the region of interest (ROI), rather than over the entire domain.

The aim of this paper is to further generalize this approach by introducing a novel notion of optimality grounded in vector optimization theory (VO). To the authors' knowledge, the existing literature on gridding algorithms lacks a clear and consistent definition of what \emph{optimality} means for a gridding kernel in this more general case. We address this issue by defining an optimal gridding kernel as a kernel lying in the Pareto front of an operator $\Lambda$, which will be called \emph{error shape operator}. More specifically, we define an optimal kernel as a function that satisfies a minimality condition with respect to a partial ordering defined on a suitable function space. Building on this framework, we demonstrate how to construct optimal gridding kernels that yield an error profile across the spectrum that resembles a desired shape, which we refer to as the \emph{target error function}.

\subsection{Original contributions and structure of the paper}
The main contributions of this paper are:
\begin{enumerate}
    \item an analysis of the error of the gridding algorithms which generalizes \cite{ye_optimal_2019}
    \item a definition of optimality for gridding kernels from the perspective of VO
    \item the application of VO techniques to find optimal gridding kernels and compensation functions such that the returned error profiles resemble pre-specified target error shapes.
\end{enumerate}

\noindent The rest of the paper is organized as follows. In \Cref{ch:Error bound for the gridding algorithm} we extend the analysis of \cite{ye_optimal_2019} to the complex case, obtaining a more general definition of an upper bound for the gridding algorithm error. In \Cref{ch:Applying vector optimization to ell(x)} we recall the main concepts of VO, and we define the error shape operator $\Lambda$ that describes the gridding algorithms error, prove its continuity, discuss the existence of optimal solutions in the minimization of $\Lambda$, and show some additional properties. In \Cref{ch:Algorithms implementation} we discuss the algorithmic implementation of the ideas above. In \Cref{ch:Numerical results} we show the numerical results of several tests that we performed. Finally, in \Cref{ch:Discussions and conclusions}, we draw the conclusions.

\subsection{Notation}
The following notation will be used throughout the paper. Given a measure space $X$, we denote by $L^p(X)$ the $L^p$ space of all measurable \textit{complex} functions on $X$ with finite $p$-norm, and by $L^p_\R(X)$ the space of all measurable \textit{real} functions on $X$ with finite $p$-norm. If $X$ is a topological space, the space $\cont(X)$ is given by all the continuous functions $X \rightarrow \C$. If $f \in L^2(\R)$, then $\widehat{f}$ denotes its Fourier transform, which in this paper is defined by
\begin{equation*}
    \widehat{f}(x) = \int_{\R} f(t) \fcoef{xt} dt.
\end{equation*}
We denote by $\Re(z)$, $\Im(z)$, and $\overline{z}$ the real part, the imaginary part, and the complex conjugate of $z \in \C$ respectively. 

\null

Vectors in $\R^N$ or $\C^N$ are denoted by bold letters, such as $\bm{u}$. The elements of $\bm{u}$ are denoted by $u_n$, $n = 0, \dots, N-1$, and we also write $\bm{u} = (u_n)$ when the dimension of $\bm{u}$ is understood. The same notation is used for matrices, with the difference that the latter are written in capital letters, such as $\bm{H} = (h_{m, n}) \in \R^{M \times N}$ (with the exception of the gridding kernels, denoted by $\bm{C} = (C_n)$, which are indeed vectors). An order relation defined in a vector space will be denoted by $\preceq$, while the symbol $\leq$ is reserved to the total ordering in $\R$. 

Other symbols not listed here are explained in the paper.

\section{Gridding algorithms}
\label{ch:Error bound for the gridding algorithm}
In this section we describe the theoretical aspects of the gridding algorithms (\Cref{ch:Definition of the gridding algorithm}), and find an upper bound for their error (\Cref{ch:An upper bound for the gridding algorithms error}). The results presented here are a generalization of those given in \cite{ye_optimal_2019}.

\subsection{Gridding algorithms for nonuniform FFT}
\label{ch:Definition of the gridding algorithm}
Suppose we are given a discrete signal $\bm{u} = (u_n)\in\C^N$, with $N \gg 1$ an integer, and we wish to compute the following nonuniform DFT for nonuniform time samples $\bm{t} = (t_n) \in \R^N$:
\begin{equation}
    \label{eq:NDFT}
    y(x) = \sum_{n < N}u_n \fcoef{x t_n}
\end{equation}
for any frequency $x \in \fov$. Computing \cref{eq:NDFT} involves $N$ complex products and $N - 1$ additions: thus, it can be done in $O(N)$ operations. Usually, we uniformly discretize the spectrum interval $\fov$ into $M$ points $x_m = -\frac{1}{2} + \frac{m}{M}$, with $m = 0, \dots, M-1$, and compute $y(x_m)$ for each of them. Thus, we get a total of $O(MN)$ operations.  The (continuous) function $y(x)$ will be considered the ground truth whose values in the points $x_m$ we would like to reproduce.

If $M$ and $N$ are large, as is usually the case, the computational cost becomes overwhelming. Using gridding algorithms, we aim to compute an approximation of $y(x)$ exploiting the speed of the FFT algorithm. We recall that one of the requirements of the FFT algorithm is the signal to be uniformly sampled in the time domain. Thus, since $\bm{u}$ is nonuniformly sampled, we first have to approximate it on a uniform grid, which is exactly why gridding algorithms were developed. This approximation is performed by convolving the signal with a kernel $C$ (\cref{eq:gridding}); then, we apply the FFT to the resampled signal and correct the resulting spectrum by a deapodization function $h$ (\cref{eq:NFFT}). The approximation $y^*(x)$ of $y(x)$ is then given by
\begin{equation}
    \label{eq:NFFT}
    y^*(x) = h(x)\sum_{k < \gamma N} u^*_k \fcoef{xk/\gamma},
\end{equation}
where
\begin{equation}
    \label{eq:gridding}
    u^*_k = \sum_{n < N}u_n C(k - \gamma t_n).
\end{equation}
The vector $\bm{u}^* = (u_k^*)$ is the resampled signal. Notice that $\bm{u}^*$ belongs to $\C^{\intpart{\gamma N}}$, where $\gamma \geq 1$ is a parameter called \textit{oversampling factor} and $\intpart{\gamma N}$ denotes the largest integer $\leq \gamma N$. Thus, we do not exclude the possibility that the signal is interpolated on a finer grid.

The two functions $C$ and $h$ are defined as follows:
\begin{itemize}
    \item[-] $C$ is a complex function in $\cont(I_W)$, where $I_W = [-W, W]$ for some chosen parameter $W \geq 1$, called \emph{gridding kernel}. It is convolved with $\bm{u}$ to produce the new signal $\bm{u}^*$ with uniform sampling. We will see that the choice of $W$ affects the speed of the gridding algorithms, as well as their accuracy. In general, higher values of $W$ result in an increased computational cost and accuracy. 
    
    \item[-] $h$ is a complex function in $\cont(\fov)$. It is called \emph{gridding correcting function}, \emph{gridding compensation function}, or \emph{deapodization function}. Its purpose is further correcting some imperfections that could occur in the approximated signal $\bm{u}^*$. 
\end{itemize}

The scalar $\gamma$ and the functions $C$ and $h$ should be considered as parameters which can be used to calibrate the gridding algorithm, depending on our specific requirements. 

\begin{remark}[Computational cost]
    Assume now for simplicity that $M = N$, so that the number of operations we need to calculate \cref{eq:NDFT} is $O(N^2)$. If we assume that $W \ll N$, \cref{eq:gridding} can be calculated with only $O(W)$ operations, since most of the $C(k - \gamma t_n)$ terms are null (remember that $C$ has support in $I_W$, so $C(k - \gamma t_n) = 0$ whenever $|k - \gamma t_n| > W$). Repeating the computation for all $k < \gamma N$, we get a total number of operations of $O(\gamma W N)$, which is linear in $N$, provided that $\gamma$ is not too high. If we then exploit the speed of the FFT to compute \cref{eq:NFFT}, we see that we can compute the vector $(y^*(x_m)) \in \C^M$ with only $O(\gamma N\log (\gamma N))$) operations. Finally, the multiplication with $h$ requires $O(N)$ operations. Therefore, summing all the contributions, the total number of operations required by the gridding procedure (\cref{eq:gridding}) and the FFT $\cref{eq:NFFT}$) is $O(\gamma W N + \gamma N \log(\gamma N))$: a number much smaller than $O(N^2)$. In other words, the scheme given by gridding+FFT is able to quickly compute an approximation of \cref{eq:NDFT}. Usually, one will choose $W$ and $\gamma$ depending on the desired speed. In general, higher values of these two parameters result in a higher accuracy but a slower computation.
\end{remark}

The problem of choosing $C$ and $h$ is tackled in the following sections.

\subsection{Gridding error}
\label{ch:An upper bound for the gridding algorithms error}
Let $x$ be a fixed frequency in $\fov$. The (squared) error between $y(x)$ and $y^*(x)$ depends on the signal $\bm{u}$, the frequency $x$, and the functions $C$ and $h$; we stress this fact by writing $y(\bm{u}; x, h, C)$ and $y^*(\bm{u}; x, h, C)$. It can be expressed as
\begin{equation}
    \label{eq:error}
    \begin{split}
        E(\bm{u}; x, h, C) & = |y(\bm{u}; x, h, C) - y^*(\bm{u}; x, h, C)|^2 
        \\
        & = \Big|\sum_{n < N}u_n \fcoef{x t_n} - h(x)\sum_{k < \gamma N}\sum_{n< N}u_n C(k  - \gamma t_n) \fcoef{xk/\gamma}\Big|^2 
        \\     
        & = \Big| \sum_{n < N}u_n \fcoef{xt_n} \cdot \Big(1 - h(x) \sum_{k < \gamma N}C(k  - \gamma t_n)\fcoef{x(k/\gamma - t_n)}\Big)\Big|^2 \\
        & \leq \sum_{n < N}|u_n|^2 \cdot \sum_{n < N}\Big|1 - h(x) \sum_{k < \gamma N}C(k  - \gamma t_n)\fcoef{x(k/\gamma  -  t_n)}\Big|^2, 
    \end{split}
\end{equation}
where the last row contains a term that depends only on the signal $\bm{u}$, and a term that depends only on $x$, $C$, and $h$. This latter term is the most interesting, since it suggests that we can control the error in $x$ by $C$ and $h$, and it has a very natural interpretation, since it is the operator norm of the linear functional
\begin{equation}
\label{eq:linear error operator definition}
    E(-; x, h, C):\C^N \longrightarrow \C,
\end{equation}
with $\C^N$ endowed with the $2$-norm. In fact, from \cref{eq:error} we see that, for every $\bm{u} \in \C^N$,
\begin{equation}
\label{eq:bound on the norm of the error operator}
    |E(\bm{u}; x, h, C)| \leq \|\bm{u}\|_2 \cdot \theta(x, h, C),
\end{equation}
where 
\begin{equation*}
    \theta(x, h, C) = \sqrt{\sum_{n < N}\Big|1 - h(x) \sum_{k < \gamma N}C(k - \gamma t_n)\fcoef{x(k/\gamma  - t_n)}\Big|^2} \geq 0.
\end{equation*}
Notice that
\begin{equation*}
    \theta(x, h, C) = \inf\{\alpha \in \R \st |E(\bm{u}; x, h, C)| \leq \|\bm{u}\|_2 \cdot \alpha,\forall \bm{u} \in \C^N \}, 
\end{equation*}
that is,
\begin{equation*}
    \theta(x, h, C) = \|E(-; x, h, C) \|,
\end{equation*}
where $\|E(-; x, h, C)\|$ is the operator norm of $E(-; x, h, C)$.

\null

In general, one wants to keep the error as small as possible. This means that, for each $x \in \fov$, we would like to find $C$ and $h$ such that $\|E(-;x, h, C)\|$ is minimum. Writing $\gamma t_n = \intpart{\gamma t_n} + \fracpart{\gamma t_n}$, where $\intpart{t}$ denotes the largest integer smaller than $t$ and $\fracpart{t} \equiv t - \intpart{t}$, we can define $k' = k -\intpart{\gamma t_n}$, so that $k' \in \fracpart{t_n} + I_W\subseteq [-W, W+1) = I_W'$. Thus, we can conveniently rewrite the sum in the second factor of the last expression of \cref{eq:error} as 
\begin{equation*}
    \sum_{k' \in I'_W}C(k' - \fracpart{\gamma t_n})\fcoef{x(k'- \fracpart{\gamma t_n})/\gamma},
\end{equation*}
which has the advantage that $\fracpart{t_n} \in [0, 1)$ for all $n < N$. For the sake of clarity, in the following we drop the symbol $'$ from $k$ with a little abuse of notation.

If $N\gg 1$, the sequence $\fracpart{\gamma t_n}$ can be considered to be drawn randomly from a uniform distribution on $[0, 1)$ \cite[Theorem~3.3.5]{stromberg_probabilities_1960}. Therefore, $\|E(-;x, h, C)\|^2$ can be seen as the Monte Carlo approximation of a finite integral on $[0, 1)$:
\begin{equation}
    \label{eq:Monte Carlo approximation}
    \begin{split}
    \|E(-;x, h, C)\|^2 
    & =\sum_{n < N}\Big|1 - h(x) \sum_{k \in I'_W}C(k - \fracpart{\gamma  t_n})\fcoef{x(k - \fracpart{\gamma t_n})/\gamma}\Big|^2 
    \\
    & \approx N \cdot \underbrace{\int_0^{1} \Big|1 - h(x) \sum_{k \in I_W'}C(k  - \nu)\fcoef{x(k - \nu)/\gamma}\Big|^2 d\nu}_{\ell(x; h, C)}.
    \end{split}
\end{equation}
From now on, we focus on the function $\ell:\fov\rightarrow\R$, which approximates $\|E(-; x, h, C)\|/N$ and does not depend on the specific sampling $t_n$, $n = 0, \dots, N-1$.

Since $|z|^2 = z \overline{z}$ for any $z \in \C$, we can expand the integrand in $\ell(x)$: 
\begin{equation}
    \label{eq:bound_open}
    \begin{split}
        \ell(x; h, C) 
        & = 1 - h(x) \underbrace{\int_0^1\sum_{k \in I'_W}C(k - \nu)\fcoef{x(k - \nu)/\gamma}d\nu}_A - \overline{h}(x) \underbrace{\int_0^1 \sum_{k \in I'_W}\overline{C}(k - \nu)\ifcoef{x(k - \nu)/\gamma}d\nu}_{\overline{A}}
        \\
        & \quad + |h(x)|^2 \underbrace{\int_0^1 \sum_{k_1 \in I'_W}\sum_{k_2 \in I'_W}C(k_1 - \nu)\overline{C}(k_2 - \nu)\fcoef{x(k_1 - k_2)/\gamma}d\nu}_{B}
        \\
        & = 1 - h(x) A - \overline{h}(x)\overline{A}+|h(x)|^2B \\
        & = 1 - \Re(h)(A + \overline{A}) - i\cdot\Im(h)(A - \overline{A}) + \Re(h)^2 B + \Im(h)^2 B \\
        & = 1 - 2 \Re(h) \Re(A) + 2\Im(h)\Im(A) + \Re(h)^2 B + \Im(h)^2 B
    \end{split}
\end{equation}
The last row of \cref{eq:bound_open} shows that $\ell$ is a \textit{real} polynomial of degree 2 in the \textit{real} variables $\Re(h)$ and $\Im(h)$, which we denote by $F(\Re(h), \Im(h))$ (notice that $B$ is real and positive by definition). To keep the gridding error defined in \cref{eq:error} as small as possible, we need $\ell(x; h, C)$ to be as small as possible. For this reason, we find the function $h^*$ such that $(\Re(h^*), \Im(h^*))$ is the minimum point for $F$. This function $h^*$ must exist and be unique, since $F$ describes an upward elliptic paraboloid, and it depends on $C$, because the coefficients $A$ and $B$ of $F$ depend on $C$. Hence, we will write $h^*=h^*(x; C)$ and $\ell^* = \ell^*(x; C) = \ell(x; h^*, C)$.

A simple analysis of the partial derivatives reveals $(\Re(h^*), \Im(h^*)) = \big(\frac{\Re(A)}{B}, -\frac{\Im(A)}{B}\big)$. Therefore, we get
\begin{equation}
    h^*(x; C) = \frac{\overline{A}}{B},
\end{equation}
and
\begin{equation}
    \ell^*(x; C) = 1 - \frac{|A|^2}{B}.
\end{equation}

We can obtain more meaningful expressions for $A$ and $B$. First, notice that
\begin{equation}
    \label{eq:A_definition}
    \begin{split}
        A 
        & = \int_0^1\sum_{k \in I'_W}C(k - \nu)\fcoef{x(k - \nu)/\gamma}d\nu 
        \\
        & = \sum_{k \in I'_W}\int_0^1 C(k - \nu) \fcoef{x(k - \nu)\gamma}d\nu \\
        & = \sum_{k \in \Z} \int_{k - 1}^k C(\xi) \fcoef{x \xi/\gamma} d\xi \\
        & = \int_\R C(\xi)\fcoef{x\xi/\gamma}d\xi,
    \end{split}
\end{equation}
so $A = \widehat{C}(x/\gamma)$, the Fourier transform of $C$ in $x/\gamma \in \fov$. Notice that we substituted $I'_W$ with $\Z$ in the third equality, since $C$ has a bounded support.

Next, consider $B$. As a first step, define $k_1 - k_2 = \beta$ and $k_1 = \alpha$, so that the two internal sums become
\begin{equation*}
    \sum_{\alpha \in I'_W}\sum_{\beta \in \alpha - I'_W}C(\alpha - \nu)\overline{C}(\alpha - \beta - \nu) \fcoef{x\beta/\gamma}.
\end{equation*}
Since the sums are finite, we can also switch them with the integral sign
\begin{equation*}
    \begin{split}
        \int_0^1 \sum_{\alpha \in I'_W}\sum_{\beta \in \alpha - I'_W}& C(\alpha - \nu)\overline{C}(\alpha - \beta - \nu) \fcoef{x\beta/\gamma} d\nu 
        \\
        & = \sum_{\alpha \in I'_W}\sum_{\beta \in \alpha - I'_W} \int_0^1 C(\alpha - \nu)\overline{C}(\alpha - \beta - \nu)\fcoef{x\beta/\gamma} d\nu.  
    \end{split}
\end{equation*}
Again, we can substitute both $I'_W$ and $\alpha - I'_W$ with $\Z$, due to the finite support of $C$. In this way, we get 
\begin{equation}
    \label{eq:B_mid_definition}
    \begin{split}
        B & = \sum_{\beta\in\Z}\fcoef{x\beta/\gamma}\sum_{\alpha\in\Z}\int_0^1 C(\alpha - \nu)\overline{C}(\alpha - \beta - \nu) d\nu 
        \\
        & = \sum_{\beta\in\Z}\fcoef{x\beta/\gamma}\sum_{\alpha\in\Z} \int_{\alpha - 1}^\alpha C(\nu)\overline{C}(\nu - \beta) d\nu 
        \\
        & = \sum_{\beta \in \Z}\fcoef{x \beta/\gamma} \int_\R C(\nu)\overline{C}(\nu - \beta) d\nu 
        \\
        & = \sum_{\beta \in \Z} a_C(\beta)\fcoef{x\beta/\gamma}.
    \end{split}
\end{equation}
The term $a_C(\beta)$ is the \textit{autocorrelation} of $C$ calculated in the lag $\beta$ \cite[Chapter~1.9]{cohen_time-frequency_1995}. It is well-known that such quantity is strictly related to the \textit{power spectral density} $|\widehat{C}(x)|^2$ via the Wiener-Kinchin theorem. 

\begin{lemma}
\label{thm:Poisson summation formula applied to autocorrelation}
    We have
    \begin{equation*}
        \sum_{\beta \in \Z} a_C(\beta)\fcoef{x\beta/\gamma} = \sum_{m \in \Z}|\widehat{C}(x/\gamma + m)|^2.
    \end{equation*}
    Moreover, the power spectral density $|\widehat{C}(x)|^2$ is continuous and integrable.
\end{lemma}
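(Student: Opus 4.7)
The plan is to reduce the identity to the classical Poisson summation formula applied to $a_C$, combined with the Wiener--Khinchin identity $\widehat{a_C} = |\widehat{C}|^2$. The continuity/integrability claim will fall out as a byproduct of the regularity of $C$.

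First, I would establish $\widehat{a_C} = |\widehat{C}|^2$. Writing $\widetilde{C}(t) := \overline{C(-t)}$, a change of variables in the integral defining $a_C$ in \cref{eq:B_mid_definition} shows that $a_C = C * \widetilde{C}$. A one-line calculation gives $\widehat{\widetilde{C}} = \overline{\widehat{C}}$, so by the convolution theorem
\begin{equation*}
    \widehat{a_C}(\xi) = \widehat{C}(\xi)\,\overline{\widehat{C}(\xi)} = |\widehat{C}(\xi)|^2.
\end{equation*}

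Next, I would settle the regularity statement. Since $C \in \cont(I_W)$ has compact support, $C \in L^1(\R)\cap L^2(\R)$. Hence $\widehat{C}$ is continuous (and bounded) by standard properties of the Fourier transform on $L^1$, and $\widehat{C} \in L^2(\R)$ by Plancherel. Taking squared moduli, $|\widehat{C}|^2$ is continuous and integrable, which is the second claim of the lemma.

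Finally, I would invoke Poisson summation. Because $C$ is supported in $I_W$, the autocorrelation $a_C = C*\widetilde{C}$ is continuous with support in $[-2W,2W]$, so the sum on the left-hand side collapses to a finite sum over $\beta \in \Z \cap [-2W, 2W]$ and in particular converges absolutely; meanwhile $\widehat{a_C} = |\widehat{C}|^2 \in L^1(\R)$ by the previous step. Under these hypotheses the pointwise Poisson summation formula
\begin{equation*}
    \sum_{\beta \in \Z} a_C(\beta)\,\fcoef{\xi \beta} = \sum_{m \in \Z} \widehat{a_C}(\xi + m)
\end{equation*}
applies; specializing to $\xi = x/\gamma$ and substituting $\widehat{a_C} = |\widehat{C}|^2$ yields the identity.

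The only potentially delicate step is justifying pointwise Poisson summation, but here it is essentially free: the compact support of $a_C$ trivializes the convergence of $\sum_n |a_C(x+n)|$ on compacts, while $\widehat{a_C} \in L^1$ handles the right-hand side. No refined decay estimate on $\widehat{C}$ is needed.
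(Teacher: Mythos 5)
Your proposal is correct and follows essentially the same route as the paper's proof: identify $\widehat{a_C}=|\widehat{C}|^2$ (Wiener--Khinchin), then apply the Poisson summation formula, which is legitimate here because $a_C$ is continuous with compact support and $|\widehat{C}|^2\in L^1(\R)$. The only cosmetic differences are that you derive Wiener--Khinchin directly from $a_C=C*\widetilde{C}$ and the convolution theorem instead of citing it, and you get continuity of $|\widehat{C}|^2$ from $C\in L^1$ rather than from $|\widehat{C}|^2$ being the Fourier transform of the integrable function $a_C$; both variants are fine.
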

\begin{proof}
    See \Cref{ch:Proofs}.
\end{proof}
Consequently, we obtain:
\begin{theorem}
    \label{thm:error bound}
    Fix a gridding kernel $C \in \cont(I_W)$. The sharpest bound function for the error in the gridding algorithm is given by
    \begin{equation}
    \label{eq:ell bound definition}
        \ell^*(x; C) = 1 -\frac{|\widehat{C}(x/\gamma)|^2}{\sum_{m \in \Z}|\widehat{C}(x/\gamma + m)|^2} =\frac{\sum_{m \neq 0}|\widehat{C}(x/\gamma + m)|^2}{\sum_{m \in \Z}|\widehat{C}(x/\gamma + m)|^2},
    \end{equation}
    which is realized if
    \begin{equation}
        \label{eq:optimal h}
        h^*(x; C) = \frac{\overline{\widehat{C}}(x/\gamma)}{\sum_{m \in \Z} |\widehat{C}(x/\gamma + m)|^2}.
    \end{equation}
\end{theorem}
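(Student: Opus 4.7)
The plan is essentially an assembly: all the nontrivial work has already been carried out in the computations leading up to the statement, so my proof proposal is to simply combine these pieces and invoke the preceding lemma.

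First, I would recall that the derivation in equation \eqref{eq:bound_open} already established that $\ell(x;h,C)$, viewed as a polynomial in $(\Re(h),\Im(h))$, is an upward elliptic paraboloid in the real plane, and a straightforward partial-derivative computation (also done in the excerpt) gives its unique minimizer as $h^*(x;C) = \overline{A}/B$, with minimum value $\ell^*(x;C) = 1 - |A|^2/B$. So the first step is just to cite these two formulas as the starting point.

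Next, I would substitute the closed-form expressions for $A$ and $B$. From the computation \eqref{eq:A_definition} we already have $A = \widehat{C}(x/\gamma)$, and from \eqref{eq:B_mid_definition} we have $B = \sum_{\beta \in \Z} a_C(\beta)\fcoef{x\beta/\gamma}$. Applying \Cref{thm:Poisson summation formula applied to autocorrelation} rewrites $B$ as $\sum_{m \in \Z}|\widehat{C}(x/\gamma + m)|^2$. Plugging these into $h^* = \overline{A}/B$ yields \eqref{eq:optimal h} immediately, and plugging them into $\ell^* = 1 - |A|^2/B$ gives the first equality of \eqref{eq:ell bound definition}.

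Finally, for the second equality in \eqref{eq:ell bound definition}, I would isolate the $m=0$ summand in the denominator: since $|\widehat{C}(x/\gamma)|^2$ appears both as $|A|^2$ in the numerator being subtracted and as the $m=0$ term in the denominator, a single-line manipulation $1 - |A|^2/B = (B - |A|^2)/B = \sum_{m\neq 0}|\widehat{C}(x/\gamma+m)|^2 \,/\, \sum_{m\in\Z}|\widehat{C}(x/\gamma+m)|^2$ closes the argument. The only care needed is that the denominator is nonzero, which is guaranteed as long as $\widehat{C}$ does not vanish identically on the shifted lattice — a mild non-degeneracy assumption implicit in the fact that $B$ arises as the quadratic coefficient of a genuinely elliptic paraboloid (as already noted in the excerpt, $B > 0$). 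There is no real obstacle here; the substantive content was absorbed into the preceding Monte Carlo approximation, the paraboloid minimization, and the Poisson summation lemma.
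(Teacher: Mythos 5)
Your proposal is correct and matches the paper exactly: the paper states this theorem as an immediate consequence ("Consequently, we obtain") of the paraboloid minimization giving $h^* = \overline{A}/B$ and $\ell^* = 1 - |A|^2/B$, the identifications $A = \widehat{C}(x/\gamma)$ and $B = \sum_{\beta} a_C(\beta)\fcoef{x\beta/\gamma}$, and \Cref{thm:Poisson summation formula applied to autocorrelation}, which is precisely your assembly. The final algebraic step isolating the $m=0$ term and the remark on nondegeneracy of the denominator are likewise as in the paper (the latter being addressed more fully only later, in \Cref{thm:lambda is well defined}).
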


\section{Vector optimization for gridding kernels}
\label{ch:Applying vector optimization to ell(x)}
In this section, we apply the theory of vector optimization to the upper bound $\ell^*(x; C)$, so that we can compute optimal solutions, that is, optimal gridding kernels. 

In \Cref{ch:Background on vector optimization} we briefly recall some of the main concepts of VO, with a particular emphasis on $L^p$ spaces.

In \Cref{ch:Definition of the optimization problem} we formally define the optimization problem we want to solve for $\ell^*(x; C)$, exploiting the theory shown in \Cref{ch:Background on vector optimization}. 

In \Cref{ch:Lambda is continuous} we prove a continuity property related to $\ell^*(x; C)$ which is useful for proving the existence of (approximate) solutions. 

In \Cref{ch:Existence of solutions for the minimization problem} we discuss the existence of Pareto optimal solutions and we apply the method of Wierzbicki's penalty scalarizing functionals. 

In \Cref{ch:More properties of Lambda} we state more useful properties related to $\ell^*(x; C)$.

\subsection{Background on vector optimization}
\label{ch:Background on vector optimization}
Let $Y$ be a real topological linear space. A subset $D\subseteq Y$ is called a \textit{cone} if it is closed under multiplication by nonnegative scalars. It is a \textit{proper} cone if $\{0\} \not = D$, and \emph{pointed} if $D \cap -D = \{0\}$. Finally, $D$ is \textit{convex} if $D + D \subseteq D$. In this paper, we will always assume that $D$ is a proper pointed convex cone. 

\begin{example}
    Let $Y = \R^N$, with $N \geq 1$. Then
    \begin{equation*}
        D = \{ (x_1, \dots, x_N) \in \R^N \st x_1 \geq 0, \dots, x_N \geq 0 \}
    \end{equation*}
    is a proper pointed convex cone.
\end{example}
\begin{example}[Example 1.51, \cite{jahn_vector_2011}]
\label{ex:Cone for Lp(Omega)}
    Let $Y = L^p(\Omega)$, with $\Omega$ a measure space and $p \in [1, \infty]$. Then
    \begin{equation*}
        D = \{f \in L^p(\Omega) \st f(\omega) \geq 0 \text{ for almost all } \omega \in \Omega \}
    \end{equation*}
    is a proper pointed convex cone.
\end{example}

\noindent Any proper pointed convex cone $D$ induces a partial ordering on $Y$ defined by
\begin{equation*}
    y_1 \preceq y_2 \Longleftrightarrow y_2 - y_1 \in D, \quad \forall y_1, y_2 \in Y.
\end{equation*}
The inequality is strict if $y_1 \neq y_2$, and we write $y_1 \prec y_2$. 

\null

Let $Z \subseteq Y$, $Z \not = \varnothing$. Consider the following problem:
\begin{equation}
    \label{prbl:abstract VO problem}
    \begin{split}
        &\text{minimize } z \in Z\subseteq Y \\
        &\text{with respect to the ordering } \preceq \text{ defined by $D\subseteq Y$.}
    \end{split}
\end{equation}

Problems of this type are called \textit{vector optimization problems}. Frequently, $Z$ is the image of a function $f:S \rightarrow Y$, where $S\subseteq X$ and $X$ is a real topological linear space. As $\preceq$ is not a total order in general, we cannot expect to find a unique solution, but rather a set of minimal points. 
\begin{definition}
    A point $z^* \in Z$ is \emph{Pareto efficient} (or \emph{Pareto optimal}) if it is minimal, that is, there does not exist another $z \in Z$, $z \not = z^*$, such that $z \prec z^*$. The set of all Pareto optimal points is called the \emph{Pareto front} of $Z$.
\end{definition}

The study of Pareto optimal solutions is usually pursued by \textit{scalarization}; that is, by converting the original problem into one or more optimization problems with only one cost function. This concept has proven to be useful both from a theoretical and a practical point of view.

Scalarization is achieved by minimizing monotonic (possibly nonlinear) functionals of $Y$.
\begin{definition}
\label{def:D-increasing functional}
    A functional $\phi:Y \rightarrow \R$ is 
    \begin{enumerate}[label=(\roman*)]
        \item \emph{$D$-increasing} if $y_1 \preceq y_2 \Rightarrow \phi(y_1) \leq \phi(y_2)$,
        \item \emph{strongly $D$-increasing} if $y_1 \prec y_2 \Rightarrow \phi(y_1) < \phi(y_2)$,
    \end{enumerate}
    for all $y_1, y_2 \in Y$.
\end{definition}
It is easy to see that if $\phi$ is strongly $D$-increasing, then it is also $D$-increasing.
\begin{example}
    Let $Y = L^p(\Omega)$ and $D$ as in \Cref{ex:Cone for Lp(Omega)}. If $p < \infty$, then the $L^p$-norm $\| \cdot \|_p$ is a strongly $D$-increasing functional. 
\end{example}

\begin{proposition}[Lemma 5.14, \cite{jahn_vector_2011}]
    \label{thm:Pareto sufficient conditions}
    Let $\phi:Y \rightarrow \R$ be a functional and suppose there exists $z^* \in Z$ such that 
    \begin{equation}
    \label{eq:abstract minimality for functional}
       f(z^*) \leq f(z),  \quad \forall z \in Z. 
    \end{equation}
    Then: 
    \begin{enumerate}[label=(\roman*)]
        \item if $\phi$ is $D$-increasing and $z^*$ is uniquely determined by \cref{eq:abstract minimality for functional}, then $z^*$ is a Pareto optimal element of $Z$;
        \item if $\phi$ is strongly $D$-increasing, then $z^*$ is a Pareto optimal element of $Z$.
    \end{enumerate}
\end{proposition}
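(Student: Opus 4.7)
The plan is to prove both parts by contradiction, combining the scalar minimality hypothesis $\phi(z^*) \leq \phi(z)$ (the paper writes $f$ but clearly means $\phi$) with the appropriate monotonicity of $\phi$. In each case I would suppose $z^*$ is not Pareto optimal, which by the definition given above furnishes some $z \in Z$ with $z \neq z^*$ and $z \prec z^*$; equivalently, $z^* - z \in D \setminus \{0\}$.

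For part (ii) the conclusion is immediate: strong $D$-increasingness applied to $z \prec z^*$ yields $\phi(z) < \phi(z^*)$, which directly contradicts the scalar minimality of $z^*$. Hence no such $z$ can exist and $z^*$ is Pareto optimal.

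For part (i) I would note that $z \prec z^*$ entails in particular $z \preceq z^*$, so $D$-increasingness delivers only the weak inequality $\phi(z) \leq \phi(z^*)$. Combined with $\phi(z^*) \leq \phi(z)$ from the hypothesis, this forces the equality $\phi(z) = \phi(z^*)$, exhibiting $z$ as a second minimizer of $\phi$ over $Z$ distinct from $z^*$. This contradicts the assumed uniqueness of the minimizer, and the claim follows.

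The argument is essentially definition-unfolding, so there is no substantive obstacle. The only subtlety worth flagging is the need to separate $\preceq$ from $\prec$: $D$-increasingness propagates only the weak ordering to $\R$, which is precisely why part (i) must invoke uniqueness as a surrogate for the strict real inequality that strong $D$-increasingness would otherwise supply in part (ii).
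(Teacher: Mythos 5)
Your argument is correct and is precisely the standard proof of this result (the paper itself gives no proof, deferring to Lemma 5.14 of the cited reference, whose argument is the same contradiction you give): strict monotonicity kills any $z \prec z^*$ directly in (ii), while in (i) weak monotonicity plus scalar minimality forces $\phi(z) = \phi(z^*)$ and uniqueness finishes. You are also right that the $f$ in the displayed condition is a typo for $\phi$, and your remark on why uniqueness substitutes for strictness is exactly the point of splitting the two cases.
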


\null

We refer, {\em e.g.},  to \cite{jahn_vector_2011,luc_theory_1989, jahn_unified_2023} for an extended background on these topics. 

\subsubsection*{The $L^p(\Omega)$ case}
\label{sec: Lp case}
In this paper, we are particularly interested in applying VO to the case of $L^p$ spaces, as we will see that the functions describing the error the gridding algorithms have finite $p$-norm.

Suppose we are given a measure space $(\Omega, \Sigma, \mu)$, where $\Omega$ is a topological space, $\Sigma$ is the Borel $\sigma$-algebra on $\Omega$ and $\mu$ is a complete measure $\Sigma \rightarrow [0, +\infty]$. Let $H$ be a (real) topological vector space $H$ and $S$ a nonempty subset of $H$. Suppose also that we are given a cost function
\begin{equation}
   L : \Omega \times S \longrightarrow \R 
\end{equation}
such that, for a fixed $p \in [1, \infty)$, the function $L_h \equiv L(-, h):\Omega \rightarrow \R$ belongs to $L^p(\Omega)$ for all fixed $h \in S$. If we define the operator
\begin{equation}
    \Lambda:S \longrightarrow L^p(\Omega) \quad \text{such that}\quad \Lambda(h) = L_h = L(-, h),
\end{equation}
the connection with vector optimization theory becomes evident. In fact, we saw in \Cref{ex:Cone for Lp(Omega)} that $L^p(\Omega)$ comes with a natural choice of a proper pointed convex cone $D$.

We can now see that we are in the same setting described in problem (\ref{prbl:abstract VO problem}), with $Z = \Lambda(S)$ (the \textit{feasible region}). Moreover, in the $L^p(\Omega)$ case we can interpret $\Omega$ as a set of indexes for multiple cost functions. Notice that, if $\Omega = \{1, \dots, N\}$ for some $N \geq 1$, then $L^p(\Omega) = \mathbb{R}^N$, and so we are dealing with finite-dimensional linear spaces, as for example in \cite{miettinen_nonlinear_1998}.

In virtue of \Cref{thm:Pareto sufficient conditions}, we are particularly interested in finding functionals over $L^p(\Omega)$ that are $D$-increasing. In this context, Wierzbicki \cite{wierzbick_basic_1977} developed an approach based on a particular kind of functionals called \textit{penalty scalarizing functionals}. The idea is to look for elements in the Pareto front that minimize (or maximize) the distance from a previously chosen reference point. More precisely, suppose we are given a reference point $\eta \in Y$. If $\eta \preceq z$ for any $z \in Z$, then we could find the elements that have \textit{minimal} $p$-distance from $\eta$. On the other hand, if there exists $z \in Z$ such that $z \preceq \eta$, then we should look for the elements in $Z$ which \textit{maximize} the $p$-distance from $\eta$. In any case, we should arrive at a Pareto optimal solution which is somewhat the most close to $\eta$ in the $p$-norm.

Fix $\eta \in Y$. An important example of a penalty scalarizing functional is given by
\begin{equation}
    \label{eq:SAF definition}
    s_{\rho, \eta}: L^p(\Omega) \longrightarrow \R \quad\text{ such that }\quad s_{\rho, \eta}(f) = -\|f - \eta \|^p_p + \rho \|(f - \eta)_+\|^p_p,
\end{equation}
where $\rho > 1$ is a penalty coefficient and $(f - \eta)_+ = \max\{0, f - \eta \}$. 

\begin{proposition}
\label{thm:Penalty function is strongly D-increasing}
    Let $s_{\rho, \eta}:L^p(\Omega) \rightarrow \R$ as above, with $p \in [1, \infty)$. Then $s_{\rho, \eta}$ is a strongly $D$-increasing functional.
\end{proposition}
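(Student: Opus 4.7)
The plan is to reformulate the functional via the decomposition into positive and negative parts, then deduce monotonicity pointwise and upgrade to strict monotonicity using the assumption that $f_1 \prec f_2$ fails only on a null set.

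First I would observe that, for any real-valued measurable $g$, the positive and negative parts $g_+ = \max(g,0)$ and $g_- = \max(-g,0)$ have disjoint supports, so $|g|^p = g_+^p + g_-^p$ pointwise for every $p \in [1,\infty)$. Integrating and applying this to $g = f - \eta \in L^p(\Omega)$ yields
\begin{equation*}
    \|f-\eta\|_p^p = \|(f-\eta)_+\|_p^p + \|(f-\eta)_-\|_p^p,
\end{equation*}
which allows me to rewrite the penalty functional in the clean form
\begin{equation*}
    s_{\rho,\eta}(f) = (\rho - 1)\|(f-\eta)_+\|_p^p - \|(f-\eta)_-\|_p^p.
\end{equation*}
Since $\rho > 1$, both terms now carry the ``correct'' sign for a monotonicity argument: increasing $f$ should enlarge the first summand (whose coefficient is positive) and shrink the subtracted second summand.

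Next I would fix $f_1, f_2 \in L^p(\Omega)$ with $f_1 \prec f_2$, i.e.\ $f_1 \leq f_2$ almost everywhere and the set $E = \{f_1 < f_2\}$ has positive measure. Using that $t \mapsto t_+$ is nondecreasing and $t \mapsto t_-$ is nonincreasing, I obtain the pointwise a.e.\ inequalities $(f_1-\eta)_+ \leq (f_2-\eta)_+$ and $(f_1-\eta)_- \geq (f_2-\eta)_-$; raising to the $p$-th power preserves these orderings and integration gives $\|(f_1-\eta)_+\|_p^p \leq \|(f_2-\eta)_+\|_p^p$ and $\|(f_1-\eta)_-\|_p^p \geq \|(f_2-\eta)_-\|_p^p$. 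Combined with $\rho - 1 > 0$, this already yields the weak inequality $s_{\rho,\eta}(f_1) \leq s_{\rho,\eta}(f_2)$.

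To upgrade to strict inequality, I would argue pointwise on $E$: at every $\omega \in E$, if $f_2(\omega) > \eta(\omega)$ then $(f_2(\omega)-\eta(\omega))_+ = f_2(\omega) - \eta(\omega) > f_1(\omega) - \eta(\omega) \geq (f_1(\omega)-\eta(\omega))_+$, while if $f_2(\omega) \leq \eta(\omega)$ then $f_1(\omega) < f_2(\omega) \leq \eta(\omega)$ forces $(f_1(\omega)-\eta(\omega))_- > (f_2(\omega)-\eta(\omega))_- \geq 0$. Thus $E$ splits (up to a null set) as $E = E_+ \cup E_-$ where $(f_1-\eta)_+ < (f_2-\eta)_+$ strictly on $E_+$ and $(f_1-\eta)_- > (f_2-\eta)_-$ strictly on $E_-$. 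Since $\mu(E) > 0$, at least one of $\mu(E_+), \mu(E_-)$ is positive, and integrating the corresponding strict pointwise inequality on a set of positive measure yields a strict inequality between the integrals. Because both $(\rho-1)$ and the $-1$ coefficient have the sign that converts this into a strict increase of $s_{\rho,\eta}$, the conclusion $s_{\rho,\eta}(f_1) < s_{\rho,\eta}(f_2)$ follows.

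The only delicate point I anticipate is the bookkeeping around ``strict'' versus ``weak'' inequalities under the passage from pointwise a.e.\ to $L^p$-integrated statements; all else is a routine consequence of the decomposition $\|g\|_p^p = \|g_+\|_p^p + \|g_-\|_p^p$ and the monotonicity of the positive/negative part maps.
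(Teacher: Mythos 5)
Your proof is correct and follows essentially the same route as the paper's: the decomposition $\|g\|_p^p = \|g_+\|_p^p + \|g_-\|_p^p$, the rewriting $s_{\rho,\eta}(f) = (\rho-1)\|(f-\eta)_+\|_p^p - \|(f-\eta)_-\|_p^p$, and the observation that both summands move in the right direction under the ordering. In fact your handling of strictness is more careful than the paper's, which asserts that $f \prec g$ gives $f_+(\omega) < g_+(\omega)$ and $g_-(\omega) < f_-(\omega)$ almost everywhere (false, e.g.\ where both functions lie below $\eta$ the positive parts coincide); your splitting of $E=\{f_1<f_2\}$ into the region where $f_2>\eta$ and the region where $f_2\leq\eta$ repairs exactly this point. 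One cosmetic slip: the chain ``$f_2-\eta > f_1-\eta \geq (f_1-\eta)_+$'' has the last inequality reversed in general; the correct justification is simply that $(f_1-\eta)_+ = \max\{f_1-\eta,0\} < f_2-\eta$ because both $f_1-\eta$ and $0$ are strictly below $f_2-\eta$ there, and your conclusion stands.
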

\begin{proof}
    See \Cref{ch:Proofs}.
\end{proof}

\begin{corollary}
\label{thm:Corollary to penalty function is strongly D-increasing}
    Let $z^*$ be a Pareto optimal element of $Z\subseteq L^p(\Omega)$, with $p \in [1, \infty)$. Then there exists $\rho > 1$ such that $z^* \in \arg \min_{z \in Z} s_{\rho, \eta}$, with $\eta = z^*$.
\end{corollary}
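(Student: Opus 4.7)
The plan is to show that $z^*$ itself achieves the minimum of $s_{\rho,z^*}$ on $Z$ for a suitably chosen $\rho > 1$, by combining a direct algebraic reformulation of the penalty functional with the very definition of Pareto optimality. Plugging $z = z^*$ into the definition gives $s_{\rho,z^*}(z^*) = 0$ immediately, so the desired statement $z^* \in \arg\min_{z \in Z} s_{\rho,z^*}$ is equivalent to $s_{\rho,z^*}(z) \ge 0$ for every $z \in Z$.

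Next I would exploit the disjoint-support decomposition $z - z^* = (z - z^*)_+ - (z - z^*)_-$. Because the two parts have disjoint supports, the $p$-norm splits as $\|z - z^*\|_p^p = \|(z - z^*)_+\|_p^p + \|(z - z^*)_-\|_p^p$, and substituting this into the definition of $s_{\rho,z^*}$ rewrites it in the compact form
$$s_{\rho,z^*}(z) = (\rho - 1)\,\|(z - z^*)_+\|_p^p - \|(z - z^*)_-\|_p^p.$$
Thus, the required nonnegativity is equivalent to $(\rho - 1)\,\|(z - z^*)_+\|_p^p \ge \|(z - z^*)_-\|_p^p$. Now I invoke the hypothesis on $z^*$: if some $z \in Z$ satisfied $(z - z^*)_+ = 0$ almost everywhere, then $z \le z^*$ almost everywhere, i.e.\ $z \preceq z^*$, and Pareto optimality would force $z = z^*$. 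So for every $z \in Z \setminus \{z^*\}$, $\|(z - z^*)_+\|_p > 0$, and the ratio
$$r(z) := \frac{\|(z - z^*)_-\|_p^p}{\|(z - z^*)_+\|_p^p}$$
is a well-defined finite real number. Taking $\rho := 1 + \sup_{z \in Z \setminus \{z^*\}} r(z)$ then makes $s_{\rho,z^*}(z) \ge 0$ on all of $Z$ and completes the argument.

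The main obstacle, and the only nontrivial point in this scheme, is guaranteeing that the supremum defining $\rho$ is finite: nothing in the bare definition of Pareto optimality prevents $\|(z - z^*)_+\|_p$ from becoming arbitrarily small while $\|(z - z^*)_-\|_p$ stays bounded away from zero along some sequence in $Z$. I would therefore expect the argument to rely on additional structure of $Z$ (e.g.\ compactness in $L^p$, boundedness inherited from the feasible set $\Lambda(S)$, or a Geoffrion-type proper efficiency assumption on $z^*$) that yields a uniform bound $M < \infty$ for $r$. Once such an $M$ is available, any $\rho \ge 1 + M$ works, so the existence claim of the corollary follows.
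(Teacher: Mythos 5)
Your decomposition $s_{\rho,z^*}(z)=(\rho-1)\,\|(z-z^*)_+\|_p^p-\|(z-z^*)_-\|_p^p$, your use of Pareto optimality to show $(z-z^*)_+\neq 0$ for every $z\in Z\setminus\{z^*\}$, and your choice of $\rho$ as one plus the ratio $r(z)$ are exactly the ingredients of the paper's own proof, so the approach is the same. The one real difference is that the paper selects $\rho$ \emph{after} fixing the competitor $z$ (it opens with ``fix $\rho>1$'', derives $(\rho-1)\varphi((z-z^*)_+)\leq\varphi((z-z^*)_-)$ for a $z$ with $s_{\rho,z^*}(z)\leq 0$, and then ``picks'' a larger $\rho$ to reach a contradiction), whereas you correctly note that the corollary asserts the existence of a \emph{single} $\rho$ valid for all $z\in Z$ simultaneously, which requires $\sup_{z\in Z\setminus\{z^*\}}r(z)<\infty$.

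That gap is genuine, and the paper does not close it either: re-choosing $\rho$ per competitor does not yield one uniform $\rho$ (the hypothesis $s_{\rho,z^*}(z)\leq 0$ may fail for the enlarged $\rho$), and the paper's closing claim that $z^*$ is minimal ``for any $\rho>1$'' is stronger still and certainly unjustified, since for fixed $\rho$ the sublevel set $\{f: s_{\rho,z^*}(f)\leq 0\}$ strictly contains $z^*-D$. Nor does compactness of $Z$ alone repair it. Take $\Omega=[0,1]$, $p=1$, $z^*=0$, and $Z=\{0\}\cup\{z_n\}_{n\geq 1}$ with $z_n=n^{-2}\chi_{[0,1/2]}-n^{-1}\chi_{[1/2,1]}$: then $z_n\to 0$ in $L^1$, so $Z$ is compact; $0$ is Pareto optimal because every $z_n$ has a nontrivial positive part; yet $r(z_n)=n\to\infty$ and $s_{\rho,0}(z_n)=(\rho-1)/(2n^2)-1/(2n)<0$ for all $n>\rho-1$, so $0\notin\arg\min_{z\in Z}s_{\rho,0}$ for any finite $\rho$. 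So the corollary as stated needs an additional hypothesis (e.g.\ $Z$ finite, or a Geoffrion-type properness of $z^*$ giving a uniform bound on $r$), and your instinct to look for extra structure rather than assert finiteness of the supremum is the correct one.
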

\begin{proof}
    See \Cref{ch:Proofs}.
\end{proof}

\subsection{The optimization problem for \texorpdfstring{$\ell^*(x; C)$}{l*(x;C)}}
\label{ch:Definition of the optimization problem}
The error bound function $\ell^*(x; C)$ from \Cref{thm:error bound} depends only on the gridding kernel $C$. We apply vector optimization theory to find the \textit{minimal} bounds, that is, the Pareto optimal solutions. By definition, a minimal solution will be a solution such that \textit{if we try to decrease the error at some subset of frequencies by modifying the solution, this will necessarily cause an increase of the error at the remaining set of frequencies}. So, a Pareto optimal solution represents a minimal error in a very intuitive sense.

Formally, let $\Omega$ denote the spectrum domain $\fov$ endowed with the Lebesgue measure, and let $H = L^2(I_W)$ and $S = \{C \in H \st \|C\|_2 = 1 \} \subseteq H$. The choice of $S$ is justified by the fact that the function $\ell^*(x; C)$ is invariant to multiplication by a not null complex scalar of the gridding kernel $C$; that is, $\ell^*(x; C) = \ell^*(x; \alpha C)$, for any $\alpha \in \C \setminus\{0\}$. 

Fix any $p \in [1, \infty]$ and define the nonlinear operator
\begin{equation}
    \label{eq:loss gridding definition}
    \begin{split}
    \Lambda: S \longrightarrow L^p(\Omega) \quad \text{ s.t. } \quad \Lambda C=\Lambda C(x) = \ell^*(x; C) & =\frac{\sum_{m \not = 0} |\widehat{C}(x/\gamma + m)|^2}{ \sum_{m \in \Z}|\widehat{C}(x/\gamma + m)|^2} \\
    & = 1 - \frac{|\widehat{C}(x/\gamma)|^2}{\sum_{m \in \Z}|\widehat{C}(x/\gamma + m)|^2},
    \end{split}
\end{equation}
where $L^p(\Omega)$ is in this case the space of \emph{real} functions with finite $p$-norm. The operator $\Lambda$ will be called the \textit{error shape operator}. Notice that we are now working in a more general setting than that of \Cref{ch:Error bound for the gridding algorithm}, where $C$ was an element of $\cont(I_W)$. However, since $\cont(I_W)$ is dense in $L^2(I_W)$, this fact does not constitute an issue. In fact, we will prove that $\Lambda$ is continuous; therefore, if we find a kernel $C\in L^2(I_W)$ that minimizes $\Lambda$, we will always be able to produce \textit{a posteriori} a continuous function $C'\in \cont(I_W)$ such that $\Lambda C'$ is close to $\Lambda C$; which means that $C'$ is close to being Pareto optimal. 

\begin{lemma}
    \label{thm:lambda is well defined}
    The gridding error shape operator $\Lambda$ defined in \cref{eq:loss gridding definition} is well defined for any $p \in [1, \infty]$.
\end{lemma}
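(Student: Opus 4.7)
The plan is to show something strictly stronger than claimed, namely that $\Lambda C \in L^\infty(\Omega)$ with $\|\Lambda C\|_\infty \le 1$. Since $\Omega = \fov$ has Lebesgue measure $1 < \infty$, the continuous embedding $L^\infty(\Omega)\hookrightarrow L^p(\Omega)$ for every $p\in[1,\infty]$ then yields the lemma for \emph{all} $p$ simultaneously with the uniform bound $\|\Lambda C\|_p \le 1$.

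Starting from the first form in \cref{eq:loss gridding definition}, I would note that both series $\sum_{m\neq 0}|\widehat{C}(x/\gamma+m)|^2$ and $\sum_{m\in\Z}|\widehat{C}(x/\gamma+m)|^2$ consist of non-negative terms, and the numerator is obtained from the denominator by removing the single non-negative term $|\widehat{C}(x/\gamma)|^2$. Hence the numerator is pointwise dominated by the denominator, so at every $x$ where the denominator is strictly positive we have $\Lambda C(x)\in[0,1]$.

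For measurability (and convergence of the infinite sum), I would invoke \Cref{thm:Poisson summation formula applied to autocorrelation}. Since $C\in L^2(I_W)$ is supported on the bounded interval $I_W$, we also have $C\in L^1(\R)$, so $\widehat{C}$ is continuous; by the cited lemma $|\widehat{C}|^2$ is continuous and integrable, and the denominator coincides with the trigonometric series $\sum_{\beta\in\Z}a_C(\beta)\fcoef{x\beta/\gamma}$, which is a continuous non-negative function of $x$. The numerator is then the difference of two continuous functions, hence continuous as well. Therefore $\Lambda C$ is continuous (in particular Borel-measurable) on the open set $\Omega\setminus Z_C$, where $Z_C$ is the closed zero set of the denominator.

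The only subtle point is the behaviour on $Z_C$: there the ratio is of the form $0/0$, so \cref{eq:loss gridding definition} does not intrinsically assign a value. I would fix this by setting $\Lambda C\equiv 0$ on $Z_C$; the resulting function is Borel-measurable on all of $\Omega$ and still satisfies $0\le \Lambda C(x)\le 1$ pointwise, so $\Lambda C\in L^\infty(\Omega)$ as required. Any other bounded Borel extension on $Z_C$ produces the same equivalence class in $L^p(\Omega)$, so the choice of convention is immaterial. I expect this convention at $Z_C$ to be the only genuine obstacle; everything else is a routine consequence of \Cref{thm:Poisson summation formula applied to autocorrelation} and finiteness of $|\Omega|$.
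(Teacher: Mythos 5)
Your overall strategy matches the paper's: show $0 \le \Lambda C(x) \le 1$ pointwise wherever the expression makes sense (numerator dominated by denominator since it differs by the single nonnegative term $|\widehat{C}(x/\gamma)|^2$), conclude $\Lambda C \in L^\infty(\Omega)$, and use the finiteness of $|\Omega|$ to land in every $L^p(\Omega)$ at once. Your route to measurability is a little different and in fact sharper than the paper's: you obtain continuity of the denominator off its zero set from \Cref{thm:Poisson summation formula applied to autocorrelation}, since the periodization equals a finite trigonometric sum in the autocorrelation, whereas the paper only argues measurability by viewing the denominator as a pointwise limit of measurable functions.

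There is, however, one genuine gap. You assert that the value assigned to $\Lambda C$ on the zero set $Z_C$ of the denominator is immaterial because any bounded Borel extension on $Z_C$ yields the same equivalence class in $L^p(\Omega)$. That is true only if $Z_C$ has Lebesgue measure zero, and nothing in your argument establishes this: a continuous nonnegative function can perfectly well vanish on a set of positive measure, in which case $\Lambda C$ would genuinely depend on the chosen convention and would not be determined by \cref{eq:loss gridding definition} alone. This is precisely the point the paper's proof addresses: by the Paley--Wiener theorem, $\widehat{C}$ extends to an entire function, which is not identically zero because $\|C\|_2 = 1$; hence its zeros form an at most countable set, so the set where $\sum_{m\in\Z}|\widehat{C}(x/\gamma+m)|^2$ vanishes is at most countable and $Z_C$ is null. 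Adding this one observation closes the gap and justifies your claim that the convention on $Z_C$ is immaterial; the rest of your argument is sound.
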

\begin{proof}
    We have to prove that $\Lambda C$ is measurable and that its $p$-norm is finite, for any $p \in [1, \infty]$.

    By the Paley-Wiener theorem (see \cite{rudin_real_1987}, section 19.1), we know that $\widehat{C}\in L^2(\R)$ can be extended to an entire function on the complex plane; in particular, it is measurable. Therefore, for a fixed $C$, the function $\ell^*(x; C)$ is measurable, being the ratio of two pointwise limits of measurable functions, provided that the denominator is null at most on a set of measure zero. This is indeed the case, as the zero set of a holomorphic function is at most countable (Theorem 10.18, \cite{rudin_real_1987}). 
    
    Finally, let $x$ be an element of $\Omega$ in which $\ell^*(x; C)$ is well defined; that is, an element such that the denominator of $\ell^*(x; C)$ does not vanish. We have $0 \leq \ell^*(x; C) \leq 1$ by definition. But then $\ell^*(-; C) \in L^\infty(\Omega)$ and thus $\ell^*(-; C) \in L^p(\Omega)$ for all $p \in [1, \infty]$, since $\Omega$ has finite measure.    
\end{proof}

Since $L^p(\Omega)$ is a partially ordered topological real space (see \Cref{ex:Cone for Lp(Omega)}), we can apply the vector optimization machinery. We thus define the following vector optimization problem:
\begin{equation}
    \label{prbl:lambda minimization problem}
\begin{split}
    & \text{minimize } \Lambda C \in \Lambda(S)\subseteq H \\
    & \text{with respect to the partial ordering $\preceq$ induced by \Cref{ex:Cone for Lp(Omega)}}\\
    & \text{subject to } C \in S.
\end{split}    
\end{equation}

\subsection{Continuity of \texorpdfstring{$\Lambda$}{Lambda}}
\label{ch:Lambda is continuous}
The following two results are necessary for the proof of the continuity of $\Lambda$.

\begin{lemma}
\label{thm:periodization is continuous}
    Fix a compact $K\subseteq\R$ and define the space $P_K\subseteq L^1(\R)$ of continuous and integrable functions whose Fourier transforms have support inside $K$. Then the linear \emph{periodization operator}
    \begin{equation}
        \Pi:P_K \longrightarrow L^\infty(\Omega) \quad\text{ such that }\quad f\mapsto\Pi f(x)= \sum_{n \in \Z}f(x + n)
    \end{equation}
    is well defined and continuous.
    Analogously, the linear operator
    \begin{equation}
        \Pin:P_K \longrightarrow L^\infty(\Omega) \quad\text{ such that }\quad f\mapsto\Pin f(x)= \sum_{n\not = 0}f(x + n)
    \end{equation}
    is well defined and continuous.
    
    Finally, if $f(x)\geq 0$ for all $x\in \R$, then $\Pin f \leq \Pi f$.
\end{lemma}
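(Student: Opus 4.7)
The plan is to exploit the compactness of $K$ via Fourier analysis on the torus $\Omega$: $\Pi f$ will turn out to be a finite trigonometric polynomial, from which continuity will follow from elementary $L^1$--$L^\infty$ estimates. First, by Tonelli,
\begin{equation*}
\int_\Omega \sum_{n\in\Z}|f(x+n)|\,dx \;=\; \|f\|_1 \;<\; \infty,
\end{equation*}
so the series defining $\Pi f$ converges absolutely for almost every $x \in \Omega$ and $\Pi f \in L^1(\Omega)$. A Fubini swap then identifies its $m$-th Fourier coefficient with $\widehat{f}(m)$:
\begin{equation*}
\int_\Omega \Pi f(x)\,\fcoef{mx}\,dx \;=\; \sum_{n\in\Z}\int_{\Omega+n} f(y)\,\fcoef{my}\,dy \;=\; \widehat{f}(m).
\end{equation*}

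Since $\supp(\widehat{f}) \subseteq K$ with $K$ compact, only the finitely many integers $m \in K \cap \Z$ contribute, and by uniqueness of Fourier series $\Pi f$ agrees a.e.\ with the trigonometric polynomial $\sum_{m \in K\cap\Z} \widehat{f}(m)\,\ifcoef{mx}$. Combined with the trivial estimate $\|\widehat{f}\|_\infty \le \|f\|_1$, this yields $\|\Pi f\|_\infty \le \#(K\cap\Z)\cdot\|f\|_1$, giving both $\Pi f \in L^\infty(\Omega)$ and continuity of $\Pi$ as a bounded linear operator. For $\Pin$, I would write $\Pin f = \Pi f - f|_\Omega$; compact support of $\widehat{f}$ puts it in $L^1(\R)$ with $\|\widehat{f}\|_1 \le |K|\cdot\|f\|_1$, and continuity plus integrability of $f$ licenses pointwise Fourier inversion $f(x) = \int_\R \widehat{f}(\xi)\,\ifcoef{x\xi}\,d\xi$, hence $\|f\|_\infty \le \|\widehat{f}\|_1$. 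Combining gives $\|\Pin f\|_\infty \le \big(\#(K\cap\Z)+|K|\big)\|f\|_1$, establishing continuity of $\Pin$. Finally, when $f \geq 0$ the pointwise identity $\Pi f(x) = f(x) + \Pin f(x)$ immediately yields $\Pin f \le \Pi f$.

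The main delicacy is justifying the Fourier-coefficient identification rigorously: for a general $f \in L^1(\R)$ the periodization need not be bounded and one cannot freely interchange sum and integral, but the a.e.\ absolute convergence supplied by Tonelli licenses the Fubini swap, after which compact support of $\widehat{f}$ collapses the entire Fourier expansion to a finite sum and removes all remaining convergence issues.
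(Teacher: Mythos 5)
Your proof is correct and follows essentially the same route as the paper's: both identify the Fourier coefficients of $\Pi f$ with $\widehat{f}(m)$ (the paper by citing the Poisson summation formula, you by a Tonelli/Fubini argument plus uniqueness of Fourier coefficients), note that compact support of $\widehat{f}$ leaves only $\#(K\cap\Z)$ terms, and conclude $\|\Pi f\|_\infty \le \#(K\cap\Z)\,\|f\|_1$. Your treatment of $\Pin$ is in fact slightly more complete: the paper's one-line reduction $\Pin f = \Pi f - f$ tacitly needs the bound $\|f\|_{L^\infty(\Omega)} \le |K|\,\|f\|_1$ that you supply explicitly via Fourier inversion.
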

\begin{proof}
    For any $x \in \Omega$, we have
    \begin{equation*}
        \begin{split}
            |\Pi f(x)| =\Big|\sum_{n \in \Z} f(x + n)\Big|
            & = \Big|\sum_{k \in \Z} \widehat{f}(k) \fcoef{xk}\Big| \leq \sum_{k \in \Z} |\widehat{f}(k)|,
        \end{split}
    \end{equation*}
    where the second equality is an application of the Poisson summation formula. Since the support of $\widehat{f}$ is contained in $K$, the last sum is composed of a finite number of terms not larger than $c_K = \#(K \cap \Z)$. Thus, $\Pi f(x)$ is a well defined function in $L^\infty(\Omega)$. Moreover, it is known that $\|\widehat{f}\|_\infty \leq \|f \|_1$ (Theorem 1.13, \cite{folland_course_2016}). Therefore, we get
    \begin{equation*}
       \sum_{k \in \Z} |\widehat{f}(k)| \leq c_K \|\widehat{f}\|_\infty \leq c_K \|f\|_1, 
    \end{equation*}
    and, since this is true for any $x \in \Omega$,
    \begin{equation*}
        \|\Pi f\|_\infty \leq c_K \|f\|_1.
    \end{equation*}
    Hence, $\Pi$ is bounded and continuous. The result for $\Pin$ follows from the fact that $\Pin f(x) = \Pi f(x) - f(x)$, for any $x \in \Omega$.

    Finally, condition $\Pin f(x) \leq \Pi f(x)$ is trivial if $f(x) \geq 0$, $\forall x \in \R$.
\end{proof}

\begin{lemma}
    \label{thm:ratio pointwise convergence}
    Let $f_n\geq 0$ be a sequence of functions in $P_K$ converging to $f \not= 0\in P_K$ in the $L^1$-norm. Then the sequence of ratios $\Pin f_n/\Pi f_n$ converges to $\Pin f/\Pi f$ pointwise almost everywhere.
\end{lemma}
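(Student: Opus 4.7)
The plan is to combine the continuity estimate from the previous lemma with a structural observation that forces the denominator $\Pi f$ to be nonzero almost everywhere. By the linearity of the periodization operators and the bound $\|\Pi g\|_\infty \leq c_K \|g\|_1$ (together with the analogous estimate for $\Pin$ from the proof of the previous lemma) applied to $g = f_n - f$, the $L^1$-convergence $f_n \to f$ immediately upgrades to \emph{uniform} convergence $\Pi f_n \to \Pi f$ and $\Pin f_n \to \Pin f$ on $\Omega$. In particular, both numerator and denominator converge pointwise everywhere.

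The key step is to show that $\Pi f(x) \neq 0$ for almost every $x \in \Omega$. Since $f \in P_K$, its Fourier transform $\widehat{f}$ is supported in the compact set $K$, so by the Poisson summation formula (as in the previous lemma)
\begin{equation*}
    \Pi f(x) = \sum_{k \in \Z} \widehat{f}(k) \fcoef{xk}
\end{equation*}
is a finite trigonometric sum, i.e.\ a trigonometric polynomial. Because $f_n \geq 0$ and $f_n \to f$ in $L^1$, we have $f \geq 0$ almost everywhere; combined with $f \neq 0$ this forces $\widehat{f}(0) = \int_\R f(t)\, dt > 0$, so $\Pi f$ is not identically zero. A nontrivial trigonometric polynomial extends to an entire function on $\C$ and hence has only isolated zeros; restricted to the bounded interval $\fov$ it has at most finitely many, in particular a zero set of Lebesgue measure zero.

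At any $x \in \Omega$ where $\Pi f(x) \neq 0$, the uniform convergence gives $\Pi f_n(x) \to \Pi f(x) \neq 0$, so $\Pi f_n(x) \neq 0$ for all sufficiently large $n$, and the standard continuity of the quotient map away from a zero denominator yields
\begin{equation*}
    \frac{\Pin f_n(x)}{\Pi f_n(x)} \longrightarrow \frac{\Pin f(x)}{\Pi f(x)}.
\end{equation*}
Combined with the previous paragraph this establishes the desired pointwise almost-everywhere convergence. The main obstacle is precisely the non-vanishing of $\Pi f$ on a full measure set: the band-limiting condition $f \in P_K$ is what makes $\Pi f$ a trigonometric polynomial, and the nonnegativity together with $f \neq 0$ supplies the nontrivial constant term $\widehat{f}(0)>0$; these two ingredients together provide the analytic rigidity needed to rule out pathological vanishing.
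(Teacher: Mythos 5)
Your proof is correct and takes essentially the same route as the paper: both arguments upgrade the $L^1$-convergence to (uniform) convergence of $\Pi f_n$ and $\Pin f_n$ via the boundedness of the periodization operators from the preceding lemma, and then pass to the quotient on the full-measure set where the denominator is nonzero. The only differences are cosmetic --- the paper carries out the quotient estimate explicitly using the bound $\big|\Pin f_n(x)/\Pi f_n(x)\big|\leq 1$ and justifies the null exceptional set by appealing to analyticity via Paley--Wiener, whereas you invoke continuity of division and show more explicitly that $\Pi f$ is a nontrivial trigonometric polynomial with constant term $\widehat{f}(0)>0$, which is a slightly more self-contained justification of the same fact.
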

\begin{proof}
    Let $X\subseteq \R$ be the intersection of all the domains of the functions $\Pin f_n/\Pi f_n$ and $\Pin f/\Pi f$ and let $x\in X$. We know that $\R\setminus X$ has measure zero (since all the $f_n$ and the $f$ functions must be entire functions by the Paley-Wiener theorem), so it is sufficient to prove the result on $X$. We have
    \begin{equation}
    \label{eq:ratio first estimate}
        \begin{split}
            \Big|\frac{\Pin f(x)}{\Pi f(x)} - \frac{\Pin f_n (x)}{\Pi f_n(x)}\Big|
            & = \Big|\frac{\Pi f_n(x)\cdot(\Pin f(x) - \Pin f_n(x)) +  \Pin f_n(x) \cdot (\Pi f_n(x)  - \Pi f(x))}{\Pi f_n(x) \cdot \Pi f(x)}\Big| \\
            & \leq \Big|\frac{\Pin f(x) - \Pin f_n(x)}{\Pi f(x)}\Big| + \Big| \frac{\Pin f_n(x)}{\Pi f_n(x)} \cdot \frac{\Pi f_n(x)  - \Pi f(x)}{\Pi f(x)}\Big| \\
            & \leq \Big|\frac{\Pin f(x) - \Pin f_n(x)}{\Pi f(x)}\Big| + \Big| \frac{\Pi f_n(x)  - \Pi f(x)}{\Pi f(x)}\Big|.
        \end{split}
    \end{equation}
    Notice that $\Big|\frac{\Pin f_n(x)}{\Pi f_n(x)}\Big| \leq 1$, since $f_n \geq 0$. Now fix $\varepsilon > 0$ and let $N\in\N$ be such that for any $n \geq N$ we have 
    \begin{equation*}
        |\Pin f_n(x) - \Pin f(x)| < \frac{\varepsilon}{2}|\Pi f (x)| \quad \text{and} \quad |\Pi f_n(x) - \Pi f(x)| < \frac{\varepsilon}{2}|\Pi f (x)|.
    \end{equation*}
    Such $N$ exists by \Cref{thm:periodization is continuous}. Using these two inequalities in the last expression of \Cref{eq:ratio first estimate}, we get
    \begin{equation*}
        \Big|\frac{\Pin f(x)}{\Pi f(x)} - \frac{\Pin f_n (x)}{\Pi f_n(x)}\Big| \leq \varepsilon, 
    \end{equation*}
    which is the thesis.
\end{proof}

We are now in the position to prove the result.
\begin{theorem}
    \label{thm:L is continuous}
    The function $\Lambda: S \rightarrow L^p(\Omega)$ is continuous for any fixed $p \in [1, \infty)$.
\end{theorem}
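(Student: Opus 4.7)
The plan is to reduce continuity of $\Lambda$ to the two preceding lemmas via a pointwise-convergence plus dominated-convergence argument. Suppose $C_n\to C$ in $S$; set $f_n(x) = |\widehat{C_n}(x/\gamma)|^2$ and $f(x) = |\widehat{C}(x/\gamma)|^2$, so that $\Lambda C_n = \Pin f_n / \Pi f_n$ and $\Lambda C = \Pin f / \Pi f$.

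First I would verify the hypotheses of \Cref{thm:ratio pointwise convergence} for this data. Since each $C_n \in L^2(I_W)$ is compactly supported, the function $\widehat{C_n}(\cdot/\gamma)$ is the Fourier transform of $\gamma C_n(\gamma\,\cdot)$, which is supported in $[-W/\gamma, W/\gamma]$. Hence $\widehat{f_n}$ is the convolution of two functions both supported in $[-W/\gamma, W/\gamma]$ and therefore lives in the \emph{fixed} compact set $K = [-2W/\gamma, 2W/\gamma]$, independent of $n$. Continuity and integrability of $f_n, f$ follow from Paley-Wiener and Plancherel, so $f_n, f \in P_K$ with $f_n, f \geq 0$.

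The main technical step is to obtain $L^1(\R)$ convergence of $f_n$ to $f$ from the hypothesized $L^2$ convergence of $C_n$ to $C$. I would use the pointwise inequality $\bigl||a|^2 - |b|^2\bigr| \leq (|a|+|b|)|a-b|$ together with Cauchy-Schwarz, a change of variable $u = x/\gamma$, and Plancherel, to arrive at an estimate of the form $\|f_n - f\|_1 \leq 2\gamma\,\|C_n - C\|_2$. This bound crucially exploits the normalization $\|C\|_2 = \|C_n\|_2 = 1$ built into the domain $S$, which keeps the coefficient in front of $\|C_n - C\|_2$ uniform in $n$; without this constraint one would only get a bound in terms of the (a priori unbounded) $L^2$ norms of $C_n$ and $C$.

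Given $L^1(\R)$ convergence, \Cref{thm:ratio pointwise convergence} delivers $\Lambda C_n \to \Lambda C$ pointwise almost everywhere on $\Omega$. Since $0 \leq \Lambda C_n \leq 1$ by \Cref{thm:lambda is well defined} and $\Omega$ has finite Lebesgue measure, the constant function $1$ dominates the sequence in $L^p(\Omega)$ for every $p \in [1, \infty)$, and the dominated convergence theorem closes the argument. The only genuine obstacle is the middle step; once the $L^1$ estimate is in hand, the rest is bookkeeping.
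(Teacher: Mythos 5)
Your proposal is correct and follows essentially the same route as the paper's proof: reduce to \Cref{thm:ratio pointwise convergence} by checking that $C\mapsto|\widehat{C}(\cdot/\gamma)|^2$ maps $S$ continuously into $P_K$, obtain pointwise a.e.\ convergence of the ratios, and conclude with dominated convergence using the uniform bound $0\leq\Lambda C\leq 1$. The only difference is that you spell out the $L^1$ estimate $\|f_n-f\|_1\leq 2\gamma\|C_n-C\|_2$ (via $\bigl||a|^2-|b|^2\bigr|\leq(|a|+|b|)|a-b|$, Cauchy--Schwarz, and Plancherel), where the paper simply asserts continuity of the squared-modulus map $L^2(\R)\to L^1(\R)$; this is a welcome clarification rather than a departure.
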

\begin{proof}
    Fix $p \in [1, \infty)$. Let $K$ be a bounded interval containing the supports of all the autocorrelations of elements in $L^2(I_W)$; for example, we could take $K = [-2W, 2W]$. The function $\phi : S \rightarrow P_K$ such that $\phi(C) = |\widehat{C}(x/\gamma)|^2$ is well defined, since we showed that $\phi(C) \in P_K$ in \Cref{thm:Poisson summation formula applied to autocorrelation}. Moreover, it is continuous, being the composition of the Fourier transform $S \subseteq L^2(I_W) \rightarrow L^2(\R)$ and the squared absolute value $|-|^2:L^2(\R) \rightarrow L^1(\R)$. Finally, $\phi(C) \geq 0$ for any $C \in S$. 

    By definition, we have 
    \begin{equation*}
        \Lambda = \frac{\Pin}{\Pi}\circ \phi. 
    \end{equation*}
    Therefore, we are left to prove that the operator $\frac{\Pin}{\Pi}$ is continuous.

    Let $f_n \in S$ be a sequence converging to $f \in S$ in the $L^2$-norm. Then $\phi(f_n)$ converges to $\phi(f)$ in $P_K$ (i.e., in the $L^1$-norm) and we can apply \Cref{thm:ratio pointwise convergence} to deduce that $\Lambda(f_n)$ converges to $\Lambda(f)$ pointwise almost everywhere. 
    
    Finally, we know that $|\Lambda(f_n)| \leq \bm{1} \in L^p(\Omega)$. Thus, the result follows by an application of Lebesgue's Dominated Convergence Theorem in $L^p(\Omega)$.
\end{proof}

\subsection{Existence of approximate solutions for the minimization problem}
\label{ch:Existence of solutions for the minimization problem}
In this section we discuss the existence of Pareto optimal solutions of problem (\ref{prbl:lambda minimization problem}). Unfortunately, determining whether this problem has minimal solutions is very difficult. Even if we managed to prove the continuity of the error shape operator in \Cref{ch:Lambda is continuous}, the noncompactness (actually, the nonclosedness) of $S$ in $H$ prevents us from deducing the existence of optimal solutions by classical arguments. To circumvent this issue, we limit ourselves to the search for \textit{approximate solutions}, as explained below.

\subsubsection{Existence of approximate Pareto optimal solutions}
Let us simplify the problem by approximating the space of admissible gridding kernels with a suitable finite-dimensional subspace of $L^2(I_W)$. In this way, we can exploit the continuity of $\Lambda$ and prove the existence of \textit{approximate} solutions. More precisely, we will restrict it to the space spanned by the first $L$ elements of the Slepian basis defined in \cite{slepian_prolate_1961}. This basis is composed of the PSWFs, and is known to possess compressing properties in the representation of bandlimited functions. A summary of the most important facts about it is given in \Cref{ch:The Slepian basis}. 

The following result was taken and adapted from \cite{landau_prolate_1962}.

\begin{theorem}[Theorems 1 and 3, \cite{landau_prolate_1962}, adapted]
Given $\varepsilon > 0$, let $E(\varepsilon)$ be the set of functions $f$ in $L^2(I_W)$ such that $\|f\|_2=1$ and 
\begin{equation*}
    \int_{|x|> 1/2}|\widehat{f}(x)|^2 dx = \varepsilon^2.
\end{equation*}
Let $X \subseteq L^2(\R)$ be the space of functions that are Fourier transforms of functions in $L^2(I_W)$. Then, for any fixed $L\geq 0, L \in \Z$, the minimum of
\begin{equation*}
    \min_{\{\phi_l\}_{l = 0}^L\subseteq X} \max_{f \in E(\varepsilon)} \min_{\{a_l\}_{l=0}^L} \Big\|\widehat{f} - \sum_{l = 0}^L a_l \widehat{\phi}_l\Big\|_2^2
\end{equation*}
is achieved by the first $L + 1$ PSWFs, $\psi_0, \dots, \psi_L \in X$.

Moreover, if $L \geq \intpart{W}$, then 
\begin{equation}
    \label{eq:slepian approximation}
    \Bigg\|\widehat{f} - \sum_{l = 0}^L a_l \psi_l \Bigg\|^2_2 \leq C \varepsilon^2,
\end{equation}
where the $a_l$ are the Fourier coefficients of $\widehat{f}$ in its expansion in the Slepian sequence and $C$ is a constant independent of $f$, $\varepsilon$, and $W$, that may be taken as 12.
\end{theorem}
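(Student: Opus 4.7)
The plan is to reduce the nested optimization to a spectral problem for the Slepian concentration operator and then apply the Courant--Fischer min--max principle. First, for fixed $\{\phi_l\}\subseteq L^2(I_W)$ (taken orthonormal by Gram--Schmidt, so $\{\widehat{\phi}_l\}$ is orthonormal in $L^2(\R)$ by Plancherel), the inner minimum is realized by orthogonal projection:
\begin{equation*}
    \min_{\{a_l\}}\Bigl\|\widehat{f} - \sum_{l=0}^L a_l \widehat{\phi}_l\Bigr\|_2^2 = 1 - \sum_{l=0}^L \bigl|\langle f,\phi_l\rangle_{L^2(I_W)}\bigr|^2 = 1 - \|P_V f\|_2^2,
\end{equation*}
with $V = \mathrm{span}(\phi_0,\dots,\phi_L)$ and $P_V$ the orthogonal projection. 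The outer minimax thus becomes: choose an $(L{+}1)$-dimensional $V\subseteq L^2(I_W)$ so as to maximize $\inf_{f\in E(\varepsilon)}\|P_V f\|_2^2$.

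Second, I would introduce the self-adjoint, positive, compact concentration operator $T\colon L^2(I_W)\to L^2(I_W)$ defined by $Tf(\tau) = \int_{-W}^{W} f(\tau')\,\sinc(\tau-\tau')\,d\tau'$, whose quadratic form equals $\langle Tf,f\rangle = \int_{-1/2}^{1/2}|\widehat{f}(x)|^2\,dx$ and therefore measures the in-band energy of a unit-norm $f\in L^2(I_W)$. Its ordered eigenvalues $1>\lambda_0\geq\lambda_1\geq\cdots>0$ have eigenfunctions $\phi_l$, and the PSWFs in the statement are precisely $\psi_l = \widehat{\phi}_l$; the constraint $f\in E(\varepsilon)$ reads $\langle Tf,f\rangle = 1-\varepsilon^2$. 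When $V = \mathrm{span}(\phi_0,\dots,\phi_L)$, the operator $T$ preserves both $V$ and $V^\perp$, so for $f\in E(\varepsilon)$
\begin{equation*}
    1-\varepsilon^2 = \langle TP_V f,P_V f\rangle + \langle TP_{V^\perp}f,P_{V^\perp}f\rangle \leq \lambda_0\|P_V f\|_2^2 + \lambda_{L+1}\bigl(1-\|P_V f\|_2^2\bigr),
\end{equation*}
and using $\lambda_0\leq 1$ this rearranges to $1-\|P_V f\|_2^2 \leq \varepsilon^2/(1-\lambda_{L+1})$. Matching this from below for any competing subspace $V'$ uses the Courant--Fischer identity $\lambda_{L+1}=\min_{\dim V=L+1}\max_{f\perp V,\,\|f\|=1}\langle Tf,f\rangle$: pick $g\perp V'$ with $\langle Tg,g\rangle\geq\lambda_{L+1}$ and blend it with a highly concentrated eigenfunction to land inside $E(\varepsilon)$ while keeping $\|P_{V'}f\|_2^2$ small, exhibiting an $f$ whose projection error is at least that of the PSWF choice.

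For the explicit bound when $L\geq\intpart{W}$, the estimate above reduces matters to showing $1-\lambda_{L+1}\geq 1/12$ uniformly in $W$. This is where the difficulty concentrates: the min--max steps are clean spectral theory and give the qualitative statement for free, but separating $\lambda_{L+1}$ from $1$ by an absolute constant independent of $W$ past the transition index $l\approx W$ requires the detailed asymptotic analysis of the PSWF eigenvalue plateau carried out in \cite{landau_prolate_1962}, which I would quote rather than reprove.
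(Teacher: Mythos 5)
The paper does not prove this statement at all: it is imported verbatim (``adapted'') from Theorems 1 and 3 of Landau--Pollak \cite{landau_prolate_1962}, with no argument supplied. Your proposal is therefore a reconstruction of the cited proof rather than an alternative to anything in the paper, and the parts you carry out in full are correct: reducing the inner minimum to an orthogonal projection via Plancherel, identifying the constraint $f\in E(\varepsilon)$ with the quadratic form $\langle Tf,f\rangle=1-\varepsilon^2$ of the $\sinc$-kernel concentration operator, and deriving the upper bound $1-\|P_Vf\|_2^2\leq \varepsilon^2/(1-\lambda_{L+1})$ from the invariance of $V$ and $V^\perp$ under $T$. This is exactly the mechanism in Landau--Pollak, and it also quietly repairs a notational slip in the paper's statement (the double hat on $\phi_l\in X$), since your projection formulation makes clear that one approximates $\widehat f$ by elements of the span of the $\psi_l$ themselves.

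Two caveats on what remains unproved. First, the optimality of the Slepian subspace against an arbitrary competitor $V'$ is only gestured at: choosing $g\perp V'$ with $\langle Tg,g\rangle\geq\lambda_{L+1}$ by Courant--Fischer is right, but the ``blend with a concentrated eigenfunction'' step is where the real work lives, because the cross term $\langle Tg,\psi_0\rangle$ need not vanish and the set $E(\varepsilon)$ imposes an exact equality on the out-of-band energy; Landau--Pollak handle this with a specific two-dimensional construction. Second, your reduction of the constant to the claim $1-\lambda_{L+1}\geq 1/12$ for $L\geq\intpart{W}$ is a sufficient route but is precisely the nonelementary eigenvalue estimate at the spectral plateau (note the Shannon number here is $2W$, not $W$, so the indexing in the paper's adaptation deserves a second look); you correctly identify this as the content of the reference rather than something recoverable from soft spectral theory. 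Since the paper itself cites these two facts rather than proving them, deferring them is consistent with the paper's treatment, but your write-up should state explicitly that those are quoted, not derived.
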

This theorem implies that the PSWFs are the best bandlimited basis to represent bandlimited functions. A study on how to improve the value of the constant $C$ is carried out in \cite{landau_prolate_1962}, to which we refer the reader. 

Let $H_L \subseteq L^2(I_W)$ be the vector space spanned by the first $L + 1$ elements of the Slepian sequence, and let $S_L = \{C \in H_L \st \|C\|_2 =1 \}$ be the unit sphere in $H_L$. The vector minimization problem becomes
\begin{equation}
\begin{split}
    & \text{minimize } \Lambda C \in \Lambda(S_L)\subseteq H_L \\
    & \text{with respect to the partial ordering $\preceq$ inherited from $H$} \\
    & \text{subject to } C \in S_L.
\end{split}  
\end{equation}

To show the existence of Pareto optimal solution for this problem, we use the following result proved in \cite{jahn_vector_2011}.
\begin{theorem}[{\cite[Theorem~6.9]{jahn_vector_2011}}] 
\label{thm:existence of minimal element}
Every weakly compact subset of a partially ordered separable normed space with a closed pointed ordering cone has at least one properly minimal element.
\end{theorem}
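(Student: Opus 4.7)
My plan is the classical scalarization approach for existence of properly minimal elements: I would produce a continuous linear functional $\phi\in Y^*$ that is strictly positive on $D\setminus\{0\}$, then take any minimizer of $\phi$ over $Z$ as the candidate properly minimal point. The rationale is that a minimizer of a strictly $D$-positive continuous linear functional is automatically properly efficient, while weak continuity of $\phi$ combined with weak compactness of $Z$ guarantees that such a minimizer exists.

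The construction of $\phi$ is where separability enters. Using pointedness of $D$ together with Hahn--Banach, for every nonzero $d\in D$ there exists $f_d\in Y^*$ with $f_d\geq 0$ on $D$ and $f_d(d)>0$. I would then pick a countable dense subset $\{d_n\}$ of $D\cap\{y\in Y:\|y\|=1\}$ (available by separability), take corresponding normalised functionals $f_n\in Y^*$ with $f_n\geq 0$ on $D$ and $f_n(d_n)\geq 1$, and set
\begin{equation*}
\phi = \sum_{n\in\N} 2^{-n}\,\frac{f_n}{1+\|f_n\|}.
\end{equation*}
Absolute convergence in $Y^*$ is immediate. For any $d\in D\setminus\{0\}$, approximating $d/\|d\|$ by some $d_n$ in the dense set forces $f_n(d)>0$ by the normalisation together with continuity of $f_n$; combined with non-negativity of every other summand on $D$, this gives $\phi(d)>0$.

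With $\phi$ in hand the rest is standard. Linearity and norm continuity of $\phi$ imply weak continuity, so by weak compactness of $Z$ there exists a minimizer $z^*$ of $\phi$ on $Z$. If some $z\in Z$ satisfied $z^*-z\in D\setminus\{0\}$, then $\phi(z^*-z)>0$ and hence $\phi(z)<\phi(z^*)$, contradicting the choice of $z^*$; so $z^*$ is minimal. That $z^*$ is in fact \emph{properly} minimal (in whichever flavour Jahn's book uses) is the classical sufficient condition: $\phi$ itself serves as the strictly positive separating functional required by the Henig, Geoffrion, or Borwein definitions of proper efficiency. The genuinely delicate step is the construction of $\phi$---fusing countably many Hahn--Banach functionals into one that is \emph{strictly}, not merely non-negatively, positive on the entire punctured cone is where separability is indispensable, and is the main place where one has to be careful with the choice of weights and normalisations.
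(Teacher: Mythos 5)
First, a remark on the comparison itself: the paper does not prove this statement --- it is imported verbatim from Jahn's monograph (Theorem 6.9 there) and used as a black box. Your overall strategy does match the standard proof of that theorem: scalarize by a continuous linear functional that is strictly positive on $D\setminus\{0\}$, minimize it over the weakly compact set using weak continuity of linear norm-continuous functionals, and invoke the classical fact that a minimizer of such a functional is properly efficient. You also correctly identify that separability enters only through the construction of the functional, and your separation step (for each nonzero $d\in D$, pointedness gives $-d\notin D$ and closedness of $D$ lets Hahn--Banach produce $f_d\in D^*$ with $f_d(d)>0$) is sound.

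The genuine gap is in gluing these functionals into a single strictly positive one. Writing $\tilde d=d/\|d\|$ and picking $d_n$ with $\|\tilde d-d_n\|<\varepsilon$, your estimate reads $f_n(\tilde d)\ge f_n(d_n)-\|f_n\|\,\|\tilde d-d_n\|\ge 1-\|f_n\|\varepsilon$, so you need an approximant $d_n$ satisfying $\|\tilde d-d_n\|<1/\|f_n\|$ \emph{for that same index} $n$. Density only guarantees that for each prescribed $\varepsilon$ some index exists, and nothing prevents $\|f_n\|$ from blowing up along precisely the subsequence of indices that approximate $\tilde d$; there is a circularity between choosing $\varepsilon$ and knowing $n$. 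Nor can this be repaired by choosing the $f_n$ more cleverly: the best achievable ratio $f_n(d_n)/\|f_n\|$ equals $\operatorname{dist}(-d_n,D)$, which for a general closed pointed cone in an infinite-dimensional space need not be bounded away from zero on the unit sphere of $D$. The standard fix puts the countable dense set on the \emph{dual} side: since $Y$ is separable, $D^*\cap B_{Y^*}$ is weak-$*$ compact and metrizable, hence weak-$*$ separable; taking $\{f_n\}$ weak-$*$ dense there and $\phi=\sum_n 2^{-n}f_n$, the assumption $\phi(d)=0$ for some $d\in D\setminus\{0\}$ forces $f_n(d)=0$ for all $n$, hence $f(d)=0$ for every $f\in D^*$ by weak-$*$ density, contradicting the separation of $-d$ from $D$. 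The remainder of your argument (existence of the minimizer and the scalarization characterization of proper minimality) is fine, modulo pinning down which notion of proper minimality Jahn's statement actually uses.
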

Since $H_L$ has finite dimension, its unit sphere $S_L$ is compact. From the continuity of $\Lambda$, the subset $\Lambda(S_L)$ is compact, and, \textit{a fortiori}, weakly compact. Since $L^p(\Omega)$ is a separable space for $p < \infty$ \cite[Theorem~4.13]{brezis_functional_2011}, we can then apply \Cref{thm:existence of minimal element} and prove the existence of minimal solutions. 

\subsubsection{Existence of Pareto optimal solutions with prescribed shapes}
\label{ch:Existence of Pareto optimal solutions with prescribed shapes}
In some scenarios, we are more interested in a restricted portion of the spectrum; for example, we may expect it to carry more useful information. In this case, we would like to use a gridding kernel whose error shape is minimal at the meaningful frequencies, while we can allow ourselves to be less accurate on the remaining part. We can use vector optimization to address this task.

Suppose that we are given a function $\eta \in L^p(\R)$ supported in $\fov$, which we call \emph{target error shape}. The informal intuition behind $\eta$ is that, for each $x \in \fov$, the smaller $\eta(x)$ is, the more accurate we want to be at $x$; that is, the smaller $\eta(x)$ is, the smaller $\|E(-;x, h, C)\|$ should be ($E$ was defined in \cref{eq:linear error operator definition}). In other words, the function $1/\eta$ could be interpreted as a \textit{utility function} (with the convention that $1/0 = +\infty$), in the sense described, for example, in \cite[Chapter~4]{varian_intermediate_2024}: it assigns to each $x\in \fov$ a \textit{rank}, so that areas with higher ranks should be considered preferred or more important. 

In an ideal case, an optimal gridding kernel $C$ should satisfy $\Lambda C(x) \leq \eta(x)$ for all $x\in \fov$. However, for a general $\eta$, this inequality cannot be satisfied everywhere, because the error shape operator $\Lambda$ is not onto; more precisely, the $L^1$ norm of $\Lambda C$ is bounded from below by a positive constant $\mu$. Thus, if the $L^1$ norm of $\eta$ is less than $\mu$, there is no hope that we could find $C\in S$ such that $C(x) \leq \eta(x)$ for all $x \in \fov$. This deep property of $\Lambda$ is proved in \Cref{ch:The relationship between Lambda and the uncertainty principle}, where its connection with the uncertainty principle is also discussed.

This issue represents the core of vector optimization: even if we cannot find $C$ such that $\Lambda C \preceq \eta$, we can still find a gridding kernel $C \in S$ such that $\Lambda C$ is the Pareto front of $\Lambda(S)$ and, at the same time, has minimal $p$-distance from $\eta(x)$. This task can be achieved by using the Wierzbicki's penalty scalarizing functionals defined in \cref{eq:SAF definition}.

More precisely, fix $\rho > 1$ and define the function
\begin{equation}
    \label{eq:small lambda definition}
    \lambda_{\rho, \eta}:S \longrightarrow \R \quad\text{ s.t. }\quad \lambda_{\rho, \eta}(C) = s_\rho(\Lambda C - \eta) = -\|\Lambda C - \eta\|_p^p + \rho \|(\Lambda C - \eta)_+\|_p^p.
\end{equation}
Then we solve the (classical) minimization problem
\begin{equation}
\label[problem]{prbl:vo minimization with SAF for lambda}
    \begin{split}
        & \text{minimize } \lambda_{\rho, \eta}(C) \\
        & \text{subject to } C \in S_L.
    \end{split}
\end{equation}
Notice that, as in \Cref{ch:Existence of solutions for the minimization problem}, we restricted our attention to the finite-dimensional Slepian space $H_L$ defined above (for some $L \geq 0)$. Since $S_L$ is compact, we see that also $\Lambda(S_L)$ is compact, and thus we have at least one solution by Weierstrass' theorem (the continuity of $\lambda_{\rho, \eta}$ following).

\begin{remark}
    The two terms in the definition of $\lambda_{\rho, \eta}$ can be interpreted in the following way (as suggested in \cite{wierzbicki_mathematical_1982}): if $\Lambda C \succeq \eta$ for all $C \in S_L$ (with respect to the order $\succeq$ defined by the cone of $L^p(\Omega)$), then we should try to decrease the distance between $\Lambda C$ and $\eta$ to achieve a better solution (thus, the positive term of $\lambda_{\rho, \eta}$ wins). If instead $\eta \in \Lambda(S_L) + D$ (where $D$ is the cone of $L^p(\Omega)$), then the target error has been reached: in this case, we should maximize the distance $\Lambda C$ and $\eta$ to achieve an optimal solution, that is, a solution lying in the Pareto front of $\Lambda(S_L)$. 
\end{remark}

\subsection{More properties of \texorpdfstring{$\Lambda$}{Lambda}}
\label{ch:More properties of Lambda}

\subsubsection{The relationship between \texorpdfstring{$\Lambda$}{Lambda} and the uncertainty principle}
\label{ch:The relationship between Lambda and the uncertainty principle}
The behaviour of the error shape operator $\Lambda$ is strictly dependent on the uncertainty principle. An easy example is given by the result below, which follows from the Paley-Wiener theorem. Actually, the Paley-Wiener theorem can be interpreted as a (qualitative) uncertainty principle (see, for example, \cite{folland_uncertainty_1997} for a detailed discussion on this topic).

\begin{proposition}
    The error shape operator is always positive; that is, $\Lambda C \succ 0$ for any $C \in S$.
\end{proposition}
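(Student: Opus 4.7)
The plan is first to unpack what $\Lambda C \succ 0$ means: since the cone in $L^p(\Omega)$ is $D = \{f : f \geq 0 \text{ a.e.}\}$, the relation $0 \prec \Lambda C$ amounts to two facts, namely that $\Lambda C(x) \geq 0$ almost everywhere and that $\Lambda C$ is not the zero element of $L^p(\Omega)$. The first is immediate: from the formula
\begin{equation*}
    \Lambda C(x) = \frac{\sum_{m \neq 0}|\widehat{C}(x/\gamma + m)|^2}{\sum_{m \in \Z}|\widehat{C}(x/\gamma + m)|^2},
\end{equation*}
both numerator and denominator are non-negative wherever defined (and the denominator is nonzero a.e., as already established in the proof of \Cref{thm:lambda is well defined}). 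So the content of the proposition is the second fact, and this is where the Paley--Wiener theorem enters.

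I would proceed by contradiction, assuming $\Lambda C = 0$ in $L^p(\Omega)$, i.e.\ $\Lambda C(x) = 0$ for almost every $x \in \fov$. At any such $x$, the numerator must vanish, which forces $\widehat{C}(x/\gamma + m) = 0$ for every $m \neq 0$. Singling out the term $m = 1$, we obtain $\widehat{C}(x/\gamma + 1) = 0$ on a subset of $\fov$ of full measure. Transporting this through the affine change of variables $y = x/\gamma + 1$, we conclude that $\widehat{C}$ vanishes on a subset of the interval $[1 - 1/(2\gamma),\, 1 + 1/(2\gamma))$ of positive Lebesgue measure.

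Now I invoke Paley--Wiener: since $C \in L^2(I_W)$ has compact support in $I_W$, its Fourier transform $\widehat{C}$ extends to an entire function on $\C$. An entire function whose zero set has positive Lebesgue measure (hence an accumulation point) must be identically zero, by the identity theorem. Therefore $\widehat{C} \equiv 0$, and Plancherel's theorem gives $\|C\|_2 = \|\widehat{C}\|_2 = 0$, contradicting $\|C\|_2 = 1$ which is part of the definition of $S$. The only step requiring care is the harmless but not entirely trivial observation that the set where $\Lambda C$ is undefined (the zero locus of the denominator, which is at most countable) is negligible and does not affect the argument; this has already been handled in \Cref{thm:lambda is well defined}, so the proof is complete.
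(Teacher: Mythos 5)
Your proof is correct and follows essentially the same route as the paper's: reduce to the nonvanishing claim, suppose the numerator vanishes almost everywhere, and use the Paley--Wiener theorem together with the identity theorem for the entire extension of $\widehat{C}$ to force $\widehat{C}\equiv 0$, contradicting $\|C\|_2=1$. Your version is slightly more explicit about why a zero set of positive measure kills an entire function and about the final appeal to Plancherel, but the substance is identical.
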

\begin{proof}
    By \Cref{thm:lambda is well defined}, we know that $\Lambda C \succeq 0$. Suppose that $\Lambda C(x) = 0$ for almost every $x \in \Omega$. Then, for all $x$ in which $\Lambda C(x)$ is defined, we have
    \begin{equation*}
        \sum_{m \not = 0}|\widehat{C}(x/\gamma + m)|^2 = 0,
    \end{equation*}
    which \textit{a fortiori} implies $\widehat{C}(x) = 0$ for almost every $x \in [m-\frac{1}{2\gamma}, m + \frac{1}{2\gamma})$ for every $m \neq 0$. Since $\widehat{C}$ is a holomorphic function, this can happen only if $\widehat{C}(x) = 0$ for almost all $x \in \R$. Therefore, $C$ should be the zero function, which is impossible, since $\|C\|_2 = 1$.
\end{proof}

A more refined result is based on the work by Slepian, Landau and Pollack on the PSWFs \cite{landau_prolate_1961,landau_prolate_1962,slepian_prolate_1961,slepian_prolate_1964,slepian_prolate_1978}. The proof of the previous proposition relies on the extremal properties of the PSWFs, which are a consequence of the uncertainty principle.

\begin{proposition}
\label{thm:Lambda lower bound}
    If $\gamma = 1$, there exists a positive constant $\Theta$ such that $\|\Lambda C\|_1 > \Theta$, for any $C \in S$.
\end{proposition}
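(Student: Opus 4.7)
The plan is to bound $\|\Lambda C\|_1$ from below by splitting the argument into a uniform upper bound on the denominator of the integrand and a Parseval-plus-concentration lower bound on the integral of the numerator. Writing
\begin{equation*}
    \|\Lambda C\|_1 = \int_{-1/2}^{1/2}\frac{\sum_{m\neq 0}|\widehat{C}(x+m)|^2}{\sum_{m\in\Z}|\widehat{C}(x+m)|^2}\,dx,
\end{equation*}
I would call $D(x)$ the denominator and $N(x)$ the numerator and bound them separately.

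For $D(x)$, I would invoke \Cref{thm:Poisson summation formula applied to autocorrelation} to rewrite it as the Fourier series $\sum_{\beta\in\Z}a_C(\beta)e^{-2\pi i x\beta}$ of the autocorrelation $a_C$. Since $\supp C\subseteq I_W$, we have $\supp a_C\subseteq[-2W,2W]$, so at most $4W+1$ terms contribute, and Cauchy--Schwarz gives $|a_C(\beta)|\leq a_C(0)=\|C\|_2^2=1$. Hence the uniform estimate $D(x)\leq 4W+1$ for every $x\in\Omega$ and every $C\in S$.

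For the integral of $N(x)$, unfolding the sum over the integer translates and applying Parseval's identity with $\|C\|_2=1$ gives
\begin{equation*}
    \int_{-1/2}^{1/2}N(x)\,dx = \int_{|y|>1/2}|\widehat{C}(y)|^2\,dy = 1-\int_{-1/2}^{1/2}|\widehat{C}(y)|^2\,dy.
\end{equation*}
Here I would invoke the Slepian--Pollak--Landau theory summarised in \Cref{ch:The Slepian basis}: over all $C\in S$, the energy concentration $\int_{-1/2}^{1/2}|\widehat{C}(y)|^2\,dy$ is maximised by the first PSWF $\psi_0$, with extremal value equal to the largest eigenvalue $\lambda_0=\lambda_0(W)$ of the prolate concentration operator. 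Thus $\int_{-1/2}^{1/2}N(x)\,dx \geq 1-\lambda_0$, and combining the two estimates yields
\begin{equation*}
    \|\Lambda C\|_1 \;\geq\; \frac{1}{4W+1}\int_{-1/2}^{1/2}N(x)\,dx \;\geq\; \frac{1-\lambda_0(W)}{4W+1}\;=:\;\Theta.
\end{equation*}

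The crux of the argument is the strict inequality $\lambda_0(W)<1$, which is precisely where the uncertainty principle enters. Qualitatively, by the Paley--Wiener theorem, the only element of $L^2(I_W)$ whose Fourier transform is supported in $[-1/2,1/2]$ is the zero function, so the supremum defining $\lambda_0$ cannot equal $1$; since the concentration operator is compact and self-adjoint, its supremum is attained by $\psi_0$ and the inequality is strict. The bound $4W+1$ on $D(x)$ is very coarse, and a sharper analysis could improve $\Theta$, but the present estimate is enough to establish positivity. Note also that the hypothesis $\gamma=1$ is used essentially here: for general $\gamma>1$ the translates $\widehat{C}(x/\gamma+m)$ probe $\widehat{C}$ on a coarser grid and the Parseval identity no longer directly yields $1-\lambda_0$.
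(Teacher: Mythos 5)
Your proposal is correct and follows essentially the same route as the paper's proof: the same $4W+1$ bound on the denominator via the compactly supported autocorrelation and Cauchy--Schwarz, the same unfolding of the numerator to $\int_{|y|>1/2}|\widehat{C}(y)|^2\,dy$, and the same appeal to the Slepian--Pollak--Landau extremal property of $\psi_0$ to conclude $\Theta = (1-\lambda_0)/(4W+1) > 0$. Your added justification of the strict inequality $\lambda_0 < 1$ via Paley--Wiener is a welcome detail that the paper leaves implicit.
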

\begin{proof}
    Let $C \in S$. By definition we have
    \begin{equation}
        \label{eq:norm 1 of Lambda}
        \|\Lambda C\|_1 = \int_{\Omega} \Bigg|\frac{\sum_{m \not = 0} |\widehat{C}(x + m)|^2}{ \sum_{m \in \Z}|\widehat{C}(x + m)|^2} \Bigg| dx.
    \end{equation}
    By \Cref{thm:Poisson summation formula applied to autocorrelation}, the denominator of the integrand is
    \begin{equation*}
        \sum_{m \in \Z}|\widehat{C}(x + m)|^2 = \sum_{\beta \in \Z}a_C(\beta)\fcoef{x\beta}.
    \end{equation*}
    The autocorrelation $a_C$ is zero outside $[-2W, 2W)$. Thus we have
    \begin{equation*}
        \begin{split}
            \Big|\sum_{\beta \in \Z}a_C(\beta)\fcoef{x\beta}\Big| = \Big|\sum_{-2W \leq \beta \leq 2W}a_C(\beta)\fcoef{x\beta}\Big| & \leq \sum_{-2W \leq \beta \leq 2W}|a_C(\beta)|. 
        \end{split}
    \end{equation*}
    Using the Cauchy-Schwarz inequality, we get the estimate
    \begin{equation*}
        |a_C(\beta)| = \Bigg|\int_{-W}^W C(\nu)\overline{C}(\nu - \beta) d\nu \Bigg| \leq \|C\|_2^2 = 1
    \end{equation*}
    Therefore
    \begin{equation*}
        \sum_{-2W \leq \beta \leq 2W}|a_C(\beta)| \leq 4W + 1.
    \end{equation*}
    Substituting in \cref{eq:norm 1 of Lambda}, we then get
    \begin{equation}
    \label{eq:norm 1 bound for Lambda - estimation}
        \|\Lambda C\|_1 \geq \frac{1}{4W + 1} \int_\Omega \sum_{m \not= 0} |\widehat{C}(x + m)|^2 dx =\frac{1}{4W + 1} \int_{|x| > 1/2} |\widehat{C}(x)|^2 dx.
    \end{equation}
    By \cite{slepian_prolate_1961}, we know that the function in $S$ which minimizes the integral in \cref{eq:norm 1 bound for Lambda - estimation} is $\psi_0$, the first PSWF, and the value of the minimum is $1 -\mu_0 > 0$, where $\mu_0 < 1$ is the eigenvalue relative to $\psi_0$. Therefore we get
    \begin{equation*}
        \|\Lambda C\|_1 \geq \frac{1 - \mu_0}{4W + 1} > 0.
    \end{equation*}
\end{proof}

\subsubsection{Shifting property of \texorpdfstring{$\Lambda$}{Lambda}}
The following result is basically a \textit{shifting property} of the gridding error shape operator $\Lambda$. Suppose that we are given a gridding kernel $C$ such that $\Lambda C$ takes small values in a certain subdomain $X$ of the spectrum $\fov$. Applying \Cref{thm:Shifting property of Lambda} below, we can translate the accuracy we have in $X$ into another subdomain of $\fov$. This has proven particularly useful in the numerical tests we did in \Cref{ch:Numerical results}.

\begin{proposition}
\label{thm:Shifting property of Lambda}
    Let $x_0 \in \R$ and let $u(\nu) = e^{2 \pi i x_0 \nu} \in L^2(I_W)$. Then
    \begin{equation*}
        \Lambda(C \cdot u)(x) = \Lambda C(x - x_0)
    \end{equation*}
    for any $x \in \fov$ such that $x/\gamma - x_0 \in \fov$.
\end{proposition}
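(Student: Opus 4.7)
The plan is to reduce the identity to a direct calculation based on the modulation--translation duality of the Fourier transform, followed by a term-by-term matching in the defining series of $\Lambda$.

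First, I would compute the Fourier transform of the modulated kernel. Since multiplication by the complex exponential $u(\nu) = e^{2\pi i x_0 \nu}$ in the spatial variable corresponds to a translation in the frequency variable, for every $y \in \mathbb{R}$ one obtains
\begin{equation*}
  \widehat{C \cdot u}(y) \;=\; \int_{-W}^{W} C(\nu)\,e^{2\pi i x_0 \nu}\,e^{-2\pi i y \nu}\,d\nu \;=\; \widehat{C}(y - x_0).
\end{equation*}
This is the only analytic input of the argument; the rest is essentially bookkeeping in the series defining $\Lambda$.

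Next, I would substitute this identity into the definition \eqref{eq:loss gridding definition} of the error shape operator. Plugging $y = x/\gamma + m$ into both numerator and denominator gives
\begin{equation*}
  \Lambda(C \cdot u)(x) \;=\; \frac{\sum_{m \neq 0}\bigl|\widehat{C}(x/\gamma + m - x_0)\bigr|^{2}}{\sum_{m \in \mathbb{Z}}\bigl|\widehat{C}(x/\gamma + m - x_0)\bigr|^{2}}.
\end{equation*}
The right-hand side is exactly $\ell^*(\,\cdot\,;C)$ evaluated at the shifted argument, so comparing with the definition of $\Lambda C$ yields $\Lambda(C\cdot u)(x) = \Lambda C(x - x_0)$. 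The hypothesis $x/\gamma - x_0 \in \fov$ enters at precisely this step: it guarantees that the shifted argument lies in the domain on which $\Lambda C$ was originally defined, so that the equality is meaningful pointwise.

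I do not expect any serious obstacle. The one technical point to verify is that the two quotients are well defined at the shifted argument, i.e.\ that the denominators do not vanish on a set of positive measure. As in the proof of \Cref{thm:lambda is well defined}, this follows immediately from the Paley--Wiener theorem: $\widehat{C}$ extends to an entire function, so its zero set has measure zero, and this property is preserved under the real translation by $x_0$. Consequently, both sides of the claimed identity are well defined almost everywhere in the relevant subset of $\fov$, and the equality between numerators and denominators holds term by term, concluding the proof.
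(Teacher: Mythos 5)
Your proof is correct and follows essentially the same route as the paper's: apply the modulation--translation duality to get $\widehat{C\cdot u}(y)=\widehat{C}(y-x_0)$ and substitute into the series defining $\Lambda$, with the hypothesis $x/\gamma-x_0\in\fov$ ensuring the shifted argument is admissible. Your added remark on well-definedness of the quotients via the Paley--Wiener theorem is a small bonus the paper omits, but otherwise the two arguments coincide.
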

\begin{proof}
    The result follows from the shifting properties of the Fourier transform. Indeed
    \begin{equation*}
        \begin{split}
            \Lambda C(x) = \frac{\sum_{m \not = 0} |\widehat{C\cdot u}(x /\gamma+ m)|^2}{ \sum_{m \in \Z}|\widehat{C\cdot u}(x/\gamma + m)|^2} = \frac{\sum_{m \not = 0} |\widehat{C}(x/\gamma - x_0 + m)|^2}{ \sum_{m \in \Z}|\widehat{C}(x/\gamma - x_0 + m)|^2},
        \end{split}
    \end{equation*}
    and the last term is $\Lambda C(x - x_0)$ provided that $x - x_0 \in \big[-\frac{1}{2}, \frac{1}{2}\big)$.
\end{proof}

\section{Numerical implementation} 
\label{ch:Algorithms implementation}
As discussed above, the gridding algorithm consists of two main steps: \emph{i)} a \textit{calibration} step, in which the optimal gridding kernel $C$ is determined; 
\emph{ii)} an \textit{evaluation} step, in which the computed kernel is used to evaluate the nonuniform discrete Fourier transform (NUDFT) of a given signal.

\null

\noindent We discuss more in detail the two steps in the following sections.




\subsection{Calibration step}
\label{ch:Calibration step}
In the calibration step, we seek the function $C \in \cont(I_W)$ that minimizes the functional $\Lambda$ with respect to some reference error profile $\eta$. We accomplish this task by minimizing the scalarizing functional $\lambda_{\rho, \eta}$ defined in \cref{eq:small lambda definition} with respect to $C$. Specifically, one has to evaluate the gridding kernel on a uniform grid of points in the interval $I_W$. These values are then stored and subsequently used as a lookup table to perform interpolation. Similarly, the correcting function $h$ must also be computed and stored. 
We first detail the procedure for numerically computing $\Lambda$ and $h$ given a gridding kernel $C$.

\subsubsection{Numerical computation of \texorpdfstring{$\Lambda$}{Lambda} and \texorpdfstring{$h$}{h}}
\label{ch:Numerical computation of Lambda and h}

Let $\bm{C} = (C_n) \in \C^N$, $n = 0, \dots, N - 1$, $N \gg 1$, be a vector representing the discretized gridding kernel $C$; that is, $C_n = C(\nu_n)$, where $\nu_n =  -W + n\cdot \Delta\nu$, $n=0, \dots, N-1$, is a uniform discretization of the interval $I_W = [-W, W)$, with $\Delta\nu = 2W / N$.

Let $\bm{x} = (-\frac{1}{2} + \frac{m}{M})$, for $m = 0, \dots, M-1$, be the vector in $\R^M$ representing the uniformly sampled frequencies in $\fov$, for $m=0,\dots, M-1$. We want to compute the value of $\Lambda C$ at each $x_m$. By \Cref{thm:error bound}, we can write
\begin{equation}
    \label{eq:Lambda expression for computation}
    \Lambda C(x_m) = 1 - \frac{|\widehat{C}(x_m/\gamma)|^2}{\sum_{k \in \Z}|\widehat{C}(x_m/\gamma + k)|^2}.
\end{equation}
Therefore, we need to calculate $|\widehat{C}(x_m/\gamma)|^2$ and $\sum_{k \in \Z}|\widehat{C}(x_m/\gamma + k)|^2$ for each $m = 0, \dots, M-1$.
Since 
\begin{equation}
\label{eq:lambda numerator}
   |\widehat{C}(x/\gamma)|^2 = \Bigg|\int_{-W}^{W}C(\nu)\fcoef{x\nu/\gamma} d\nu \Bigg|^2, 
   \qquad x \in \fov,
\end{equation}
we can discretize it as
\begin{equation}
|(\widehat{C}(x_m))_m|^{\odot 2} = |\bm{H} \bm{C}|^{\odot 2} \in \R^M, \qquad \text{with $\bm{H}=\Delta\nu \cdot (\fcoef{x_m \nu_n/\gamma})_{m, n}\in \C^{M \times N}$},
\end{equation}
The factor $\Delta\nu$ is introduced to take into account the integral sign in \cref{eq:lambda numerator}. A more refined quadrature formula could be used in future work. Notice that $|\bm{H} \bm{C}|^{\odot 2}$ means the elementwise squared absolute value of the matrix-vector product $\bm{H}\bm{C} \in \C^M$. This product can be fast computed through a chirp Z-transform (see \cite{rabiner_chirp_1969}), whose speed is of the same order as the FFT one.

Next, consider the term $\sum_{k \in \Z}|\widehat{C}(x_m/\gamma + k)|^2$ for each $m = 0, \dots, M-1$. By \Cref{thm:Poisson summation formula applied to autocorrelation}, it can be written as
\begin{equation}
\label{eq:denominator as autocorrelation}
   \sum_{\beta\in \Z}a_C(\beta)\fcoef{x_m\beta/\gamma},
\end{equation}
where $a_C(\beta)$ is the autocorrelation function of $C$. Being $C \in \cont(I_W)$, it is easy to show that $a_C \in \cont(I_{2W})$. Since $\#([-2W, 2W] \cap \Z) = 4W + 1$, we can then define $\bm{a} =(a_C(\beta)) \in \C^{4W + 1}$, so that 
\begin{equation}
\label{eq:bounded autocorrelation}
    \Big(\sum_{\beta\in \Z}a_C(\beta)\fcoef{x_m\beta/\gamma}\Big)_m = \Big(\sum_{-2W \leq \beta \leq 2W} a_\beta \fcoef{x_m\beta/\gamma}\Big)_m = \bm{K} \bm{a} \in \R^M,
\end{equation}
where \begin{equation*}
    \mathbf{K}=(K_{m, \beta})\in\C^{M \times (4W + 1)} \quad \text{s.t} \quad K_{m, \beta} = (\fcoef{x_m\beta/\gamma}).
\end{equation*}
Notice that $\bm{K}\bm{a} \in \R^M$ since the Fourier transform of a Hermitian function is real, and the autocorrelation of a function of a real variable is always Hermitian. 

Thus, we are left to explain how to compute the vector $\bm{a}$. The autocorrelation of $C$ is given by
\begin{equation}
\label{eq: autocorrelation definition}
    a_C(\beta)  = \int_{-W}^{W} C(\nu)\overline{C}(\nu - \beta) d\nu.
\end{equation}
Define $\bm{\Tilde{\nu}} = (-2W + k \cdot \Delta\nu)_k \in \R^{2N}$, for $k = 0, \dots, 2N - 1$, representing a uniform discretization of the interval $I_{2W}$, and let $\kappa(\beta)$ be the index such that $\Tilde{\nu}_{\kappa(\beta)} = \beta$ ($N$ should be taken so that such index exists, i.e. $N/2W \in \Z$). Then \cref{eq: autocorrelation definition} can be discretized as
\begin{equation}
    \label{eq:discretized autocorrelation}
    a_C(\beta) = a_{C, \beta} = \Delta\nu \cdot \sum_{k = 0}^{2N-1}C_k\overline{C}_{k -\kappa(\beta)}, 
\end{equation}
with the convention that $C_{k - \kappa(\beta)} = 0$ if $k - \kappa(\beta) \geq 2N$. 

To conclude, the final discretized expression for $\Lambda C$ evaluated at the points $x_m$ is
\begin{equation}
    \label{eq: discretized lambda}
    \bm{\Lambda C} = 1 -(|\bm{H}\bm{C}|^{\odot 2} \oslash \bm{K}\bm{a}) \in \R^M,
\end{equation}
where $\oslash$ means the elementwise division. A word of caution should be spent here: in general, $\bm{K} \bm{a}$ can have some zero element, even though the probability is zero by \Cref{thm:lambda is well defined}. One needs to handle this case, for example by (quite arbitrarily) setting the value of $\Lambda C (x_m)$ to zero. This will not be an issue, as long as $M$ is large enough.

\hfill

In the same way, we can compute $\bm{h} \in \C^M$, the vector representing the gridding correcting function at the points $x_m$. By \cref{eq:optimal h}, we know that
\begin{equation}
    h(x) = \frac{\overline{\widehat{C}}(x/\gamma)}{\sum_{m \in \Z}|\widehat{C}(x/\gamma+m)|^2}.
\end{equation}
Therefore, we can discretize in the following way
\begin{equation}
    \bm{h} = (h_m)_m = \overline{\bm{H}\bm{C}} \oslash \bm{K}\bm{a} \in \C^M, \quad m = 0, \dots, M-1,
\end{equation}
where $\overline{\bm{H}\bm{C}}$ is the elementwise complex conjugate of $\bm{H}\bm{C} \in \C^M$.

\hfill

Pseudocode for computing $\Lambda C$ and $h$ is provided in \Cref{alg:Lambda and h}, \Cref{ch:Algorithms}.

\subsubsection{Optimization of \texorpdfstring{$\lambda_{\rho, \eta}(C)$}{lambda(rho, eta)(C)}}
\label{ch:Optimization}
Once we have a discrete expression for $\Lambda C$, we can choose a reference profile $\bm{\eta}\in\R^M$ (depending on our task, see \Cref{ch:About eta}), a norm parameter $p \in [1, \infty)$, a parameter $\rho > 1$, and minimize the functional given by the composition of $\Lambda$ with a scalarizing function $s_\rho$
\begin{equation}
    \lambda_{\rho, \eta}(C) = s_{\rho}(\Lambda(C) - \eta) = -\|\Lambda C - \eta\|_p^p + \rho \|(\Lambda C - \eta)_+\|_p^p, 
\end{equation}
where $(-)_+$ indicates the positive part.

The choice of $\rho > 1$, as seemingly for $\eta$, depends on the specific application. Based on the heuristic proposed at the end of \Cref{ch:Existence of Pareto optimal solutions with prescribed shapes}, one could argue that, at least for a $\eta$ below the feasible region $\Lambda(S)$ (with respect to the relation order $\preceq$ in $L^p(\Omega)$), a large value of $\rho$ corresponds to a gridding error shape more close to $\eta$ in the $p$-norm, while a small value corresponds to a gridding error shape less similar to $\eta$, but smaller at other frequencies of the spectrum. In other words, $\rho$ acts as a parameter that allows one to choose how much $\Lambda C$ should be similar to $\eta$ at the cost of worsening the error at less important frequencies.

\hfill

Following the discussion in \Cref{ch:Existence of Pareto optimal solutions with prescribed shapes}, we restrict the domain space of the optimization search to the space $S_L$ spanned by the first $L + 1$ Slepian functions, where $L$ is a tunable parameter. The coefficients of the gridding kernel vector $\bm{C}$ in the Slepian basis are defined by two vectors $\bm{c}_{\text{real}}, \bm{c}_{\text{imag}} \in \R^N$, the first representing their real part, and the second the imaginary part. Therefore, if $\bm{B} \in \R^{N \times (L+1)}$ is the matrix whose $l$th column is the discretized version of the $l$th PSWF, $\psi_l$, with $l = 0, \dots, L+1$, then
\begin{equation}
    \bm{C}  = \bm{B}\bm{c}_{\text{real}} + i \cdot \bm{B}\bm{c}_{\text{imag}} \in \C^N.
\end{equation}
The optimization variables actually used are hence the two vectors $\bm{c}_\text{real}$ and $\bm{c}_{\text{imag}}$. 

The choice of the starting matrix $[\bm{c}_{\text{real}, 0}, \bm{c}_{\text{imag}, 0}]\in \R^{(L + 1) \times 2}$ has proven to be tricky. Usually, a random matrix leads to poor results, especially if $W$ is not small, likely because of the presence of numerous local minima in the objective function. We found that a good starting point is usually given by the coefficients in the Slepian basis of a kernel which is known to perform well all over the spectrum, such as the first PSWF for the case of $\gamma = 1$, or the optimized Kaiser-Bessel function prescribed by Fessler \cite{Fessler_minmaxNUFFT} for higher values of $\gamma$. Such kernels sometimes need to be further corrected to match the shape of $\eta$ (especially for the $\gamma = 1$ case) by exploiting the shifting property of $\Lambda$ described in \Cref{thm:Shifting property of Lambda}.

Finally, notice that the choice of $W$ depends on the application at hand: a larger value of $W$ increases the accuracy but it decreases the speed of the gridding algorithm.

\subsection{Evaluation step}
\label{ch:Evaluation step}
The evaluation step is described by \cref{eq:NFFT} and \cref{eq:gridding}. \Cref{eq:NFFT} can be quickly computed using the FFT. Calculating the $u^*_k$ terms requires some comments. The gridding function $C(t)$ is nonzero only if $|t| \leq W/2$. Therefore, $C(k - \gamma t_n)$ can be non null only if $|k - \gamma t_n| \leq W/2$, which can happen for not more than $W + 1$ terms. This means that the sum in \cref{eq:gridding} reduces to a sum of not more than $W + 1$, and since $N \gg W$, it can be calculated very quickly.

Notice that \textit{a priori} we do not know the values of $C$ at points $(k - \gamma t_n)$. These have to be computed by interpolating the values we found in the calibration step. An interpolation by splines of 3rd degree seems to suffice.

The gridding algorithm is detailed in \Cref{alg:gridding}, \Cref{ch:Algorithms}.

\section{Numerical results}
\label{ch:Numerical results}
In the following we provide examples in which the gridding kernels found by vector optimization outperform both Fessler's MIRT-NUFFT \cite{Fessler_minmaxNUFFT} and the PSWF, as they better approximate the target error function $\eta$. The used $\eta$ are shown in \Cref{fig:Test target profile functions}.

\begin{figure}[h]
    \centering
    \subfigure[Test 1]{\includegraphics[width=0.32\textwidth]{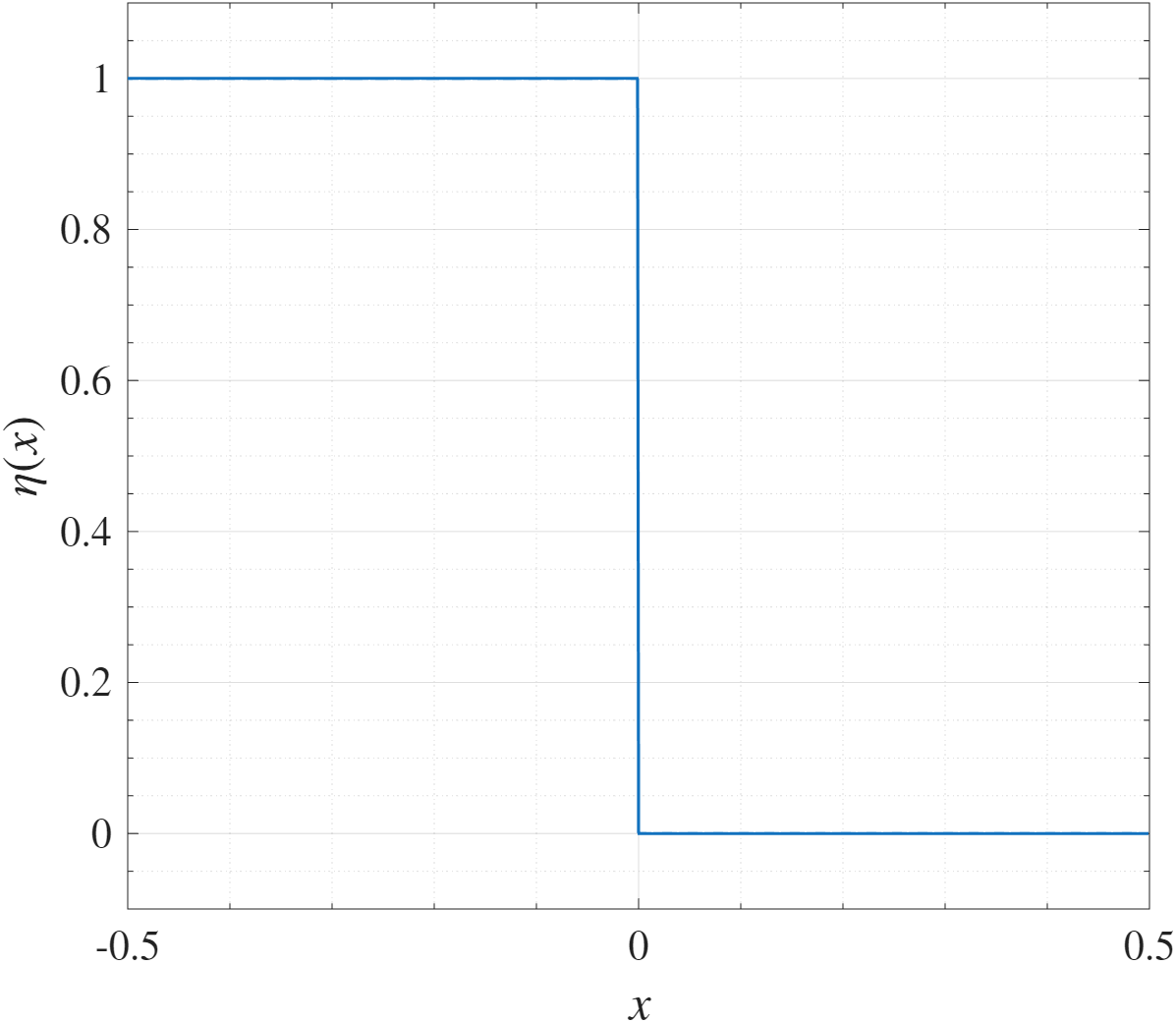}}
    \subfigure[Test 2]{\includegraphics[width=0.32\textwidth]{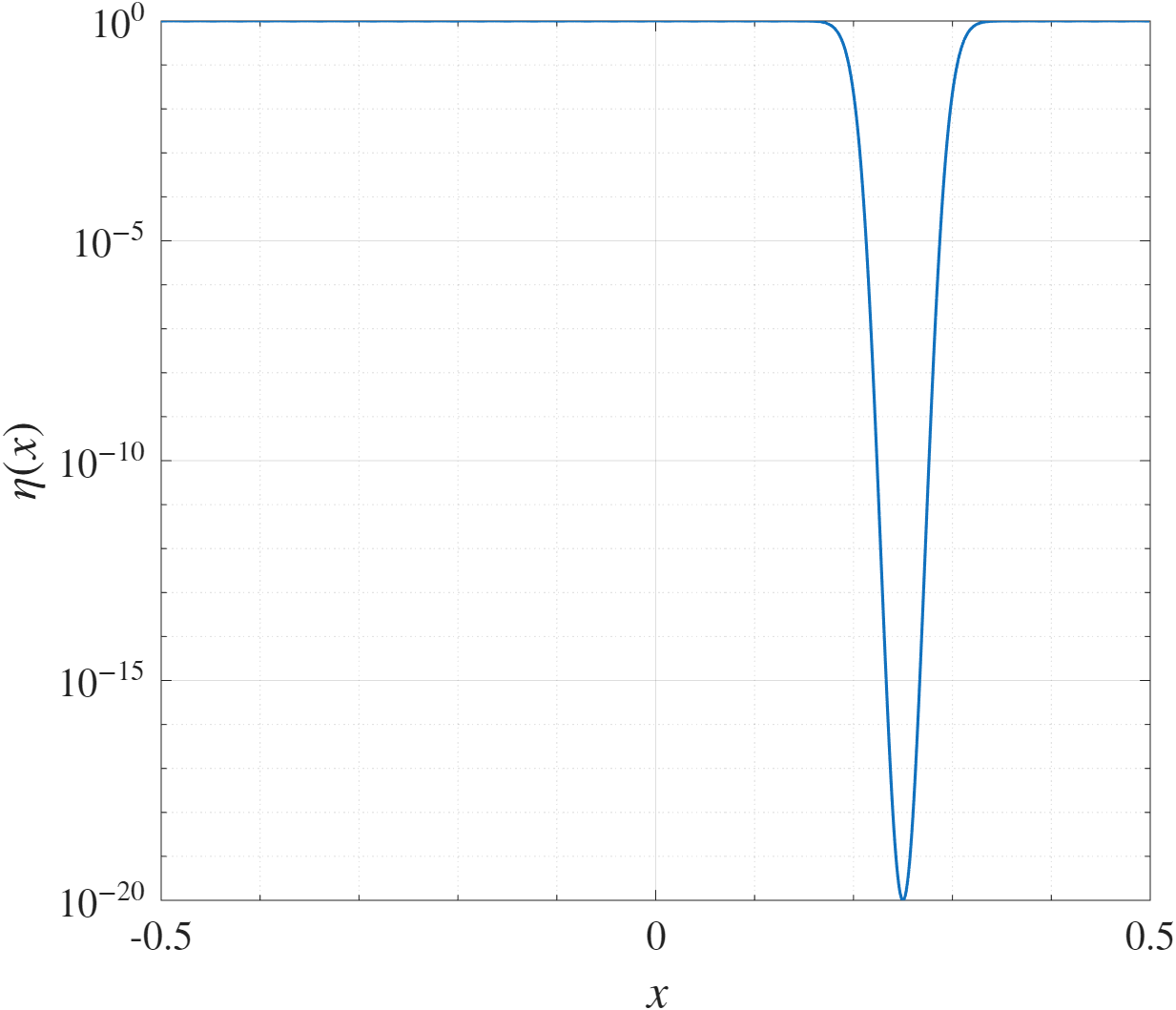}}
    \subfigure[Test 3]{\includegraphics[width=0.32\textwidth]{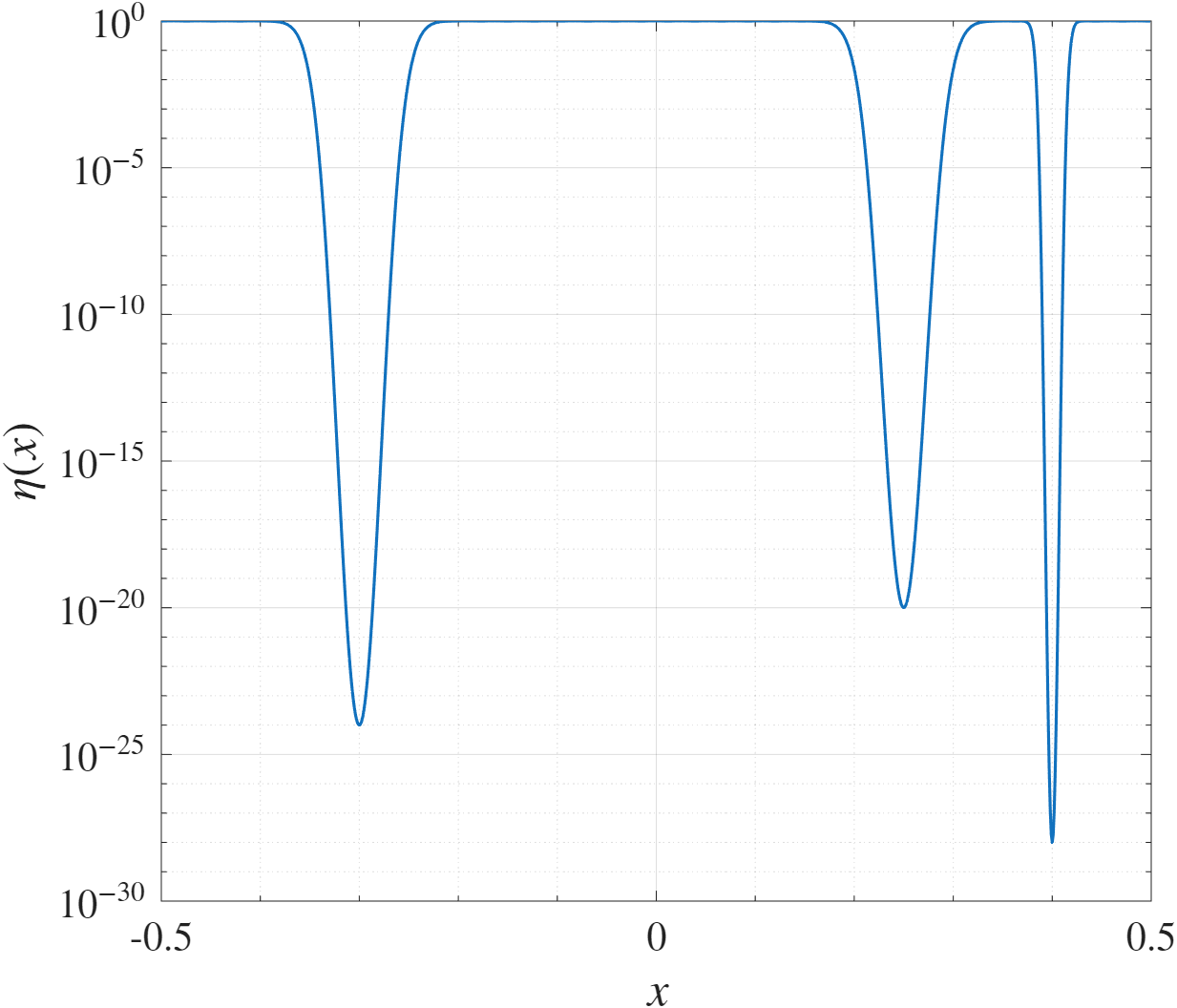}}
    \caption{The target profile error functions $\eta$ used for the tests.}
    \label{fig:Test target profile functions}
\end{figure}


To the authors' knowledge, in terms of accuracy, the MIRT-NUFFT represents the current state of art, and it is optimized for oversampling $\gamma = 2$. However, the PSWF is sometimes considered better for the case $\gamma = 1$ \cite{ye_optimal_2019}.

\subsection{Technical information}
All tests are performed in the following manner: for a given set of parameters $W$, $D$, $L$, $\rho$, $\eta$, and $\gamma$, we first calculate the optimal gridding kernel $C$ using the interior-point method. The minimization of $\lambda_{\rho, \eta}$ was carried out in \textsc{Matlab} using the \textit{\tt fmincon} function with constraint $\|C\|_2^2 = 1$. The discrete PSWFs are implemented in \textsc{Matlab} through the function \textit{\tt dpss}. We empirically found that setting $L = 35$, $D = 21$, $p = 1$, and $\rho = 10^{16}$, is generally a good choice of parameters for all $W$, $\gamma$ and $\eta$, as it results in optimal kernels with shapes close to $\eta$. We make no claim of optimality. It is also worth noting that the choice $p = 1$ makes the loss function $\lambda_{\rho, \eta}$ nondifferentiable. For this reason, gradient-based optimization metrics are not useful in this context. 

It is important to note that, for most tests, we cannot guarantee that the optimization converged to a global minimum. The algorithm usually terminates for the following two reasons:
\begin{itemize}
    \item[-] the norm of the steps is smaller than the chosen tolerance, which is very small ($\text{\tt tol} =10^{-16}$);
    \item[-] the number of function evaluations exceeds the chosen maximum, which is very large ($\text{\tt MaxFuncEval} = 10^6$).
\end{itemize}
Nevertheless, the resulting kernels perform satisfactorily.
  
The choice of the initial guess for the coefficients of $C$ in the Slepian basis (see \Cref{ch:Optimization}) is made in the following way: for the case $\gamma = 1$, we use a shifted version of the PSWF, while for the case $\gamma = 2$ we use the Kaiser-Bessel function with optimal parameters as found by Fessler. For other choices of the oversampling parameter $\gamma$, we use the PSWF or Fessler's Kaiser-Bessel kernel depending on which performs better (that is, depending on which better approximates the target function $\eta$). 

The tests consist of calculating the mean absolute errors (MAEs), i.e. the average absolute difference between the DFT (as in \cref{eq:NDFT}) and the gridding+FFT at each frequency for certain sets of signals described below. This means that the error is actually a vector of errors, each component representing the error at a certain frequency of the spectrum.
For MIRT-NUFFT, we use the implementation given in \cite{mirt_github}; for our optimized kernels and the PSWF, we use our implementation of the gridding+FFT algorithm. 

The formula for the calculation of the MAE vector is
\begin{equation}
    \text{MAE}_{\text{alg}}(x) = \frac{1}{S} \sum_{s = 1}^{S} |y(x; \bm{u}_s) - y_{\text{alg}}(x, \bm{u}_s) |,
\end{equation}
where $S$ is the number of signals used for the test, $\bm{u}_s$ is the $s$-th signal in the test set, $y(x; \bm{u}_s)$ is the nonuniform DFT as defined in \cref{eq:NDFT}, and $y_{\text{alg}}(x; \bm{u}_s)$ is the Fourier transform computed by the algorithm $\text{alg} \in \{\text{MIRT, PSWF, Ours}\}$.

The sets of signals consist of $S = 100$ signals generated randomly by the following steps:
\begin{enumerate}
    \item A number $Q$ of meaningful frequencies is chosen randomly between 10 and 100, using a uniform discrete distribution.
    \item For each $q=1, \dots, Q$, a frequency $x_q \in \fov$ is randomly chosen using a probability distribution connected to $\eta$. More precisely, for Tests 2 and 3 we use $\log(1/\eta(x))$ as a probability distribution (up to normalization); for Test 1, we use a uniform distribution in the interval $\big[0, \frac{1}{2}\big)$.  
    \item For each $q=1, \dots, Q$, an amplitude $A_q$ is randomly chosen using the uniform distribution in the interval $[0.1, 1]$.
    \item The final discrete signal $\bm{u}=(u_n) \in \C^N$ is computed using the following formula: $u_n = \sum_{q=1}^Q A_q e^{2 \pi i x_q t_n}$, where $t_n$ is chosen randomly using the uniform distribution in the interval $[0, N]$.
\end{enumerate}
This choice is made to mimic a real world scenery, in which we expect the more meaningful frequencies of the signals to concentrate in the areas of the spectrum in which $\eta$ is smaller. An example of the (ideal) Fourier transform of a signal for Test 3 is shown in \Cref{fig:example of Fourier transform of a signal for Test 3}.

\begin{figure}
    \centering
    \includegraphics[width=0.5\linewidth]{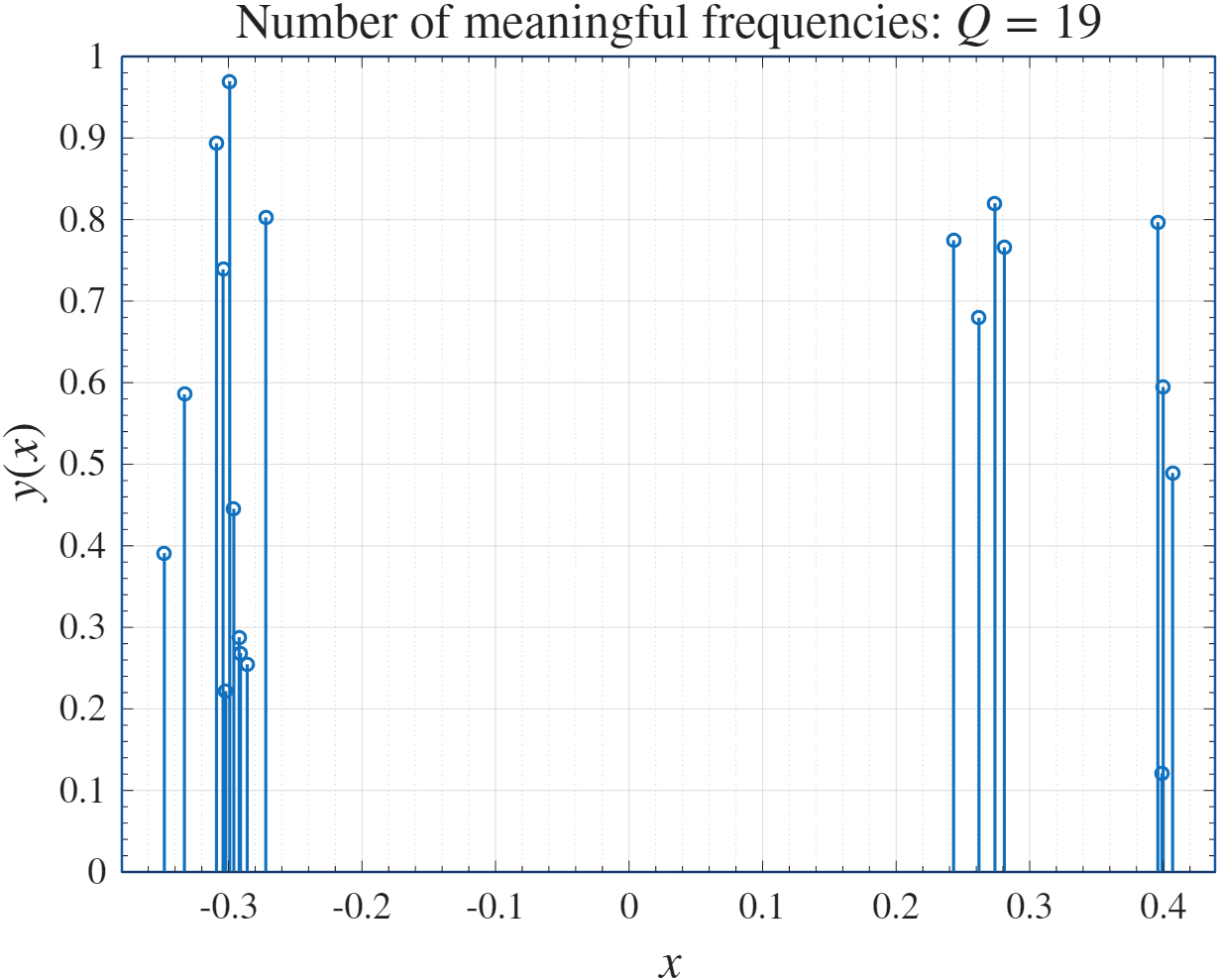}
    \caption{An example of the Fourier transform of a signal for Test 3. Notice that the meaningful frequencies concentrate on the areas in which the target error function $\eta$ is smaller.}
    \label{fig:example of Fourier transform of a signal for Test 3}
\end{figure}

\null

Further tests on the behaviour of the solution when $\rho$ or $p$ vary will be available on a dedicated GitHub.

\subsection{Tests discussion}
We start analysing the case without oversampling, i.e. the case with $\gamma = 1$. In \Cref{fig:MAE without oversampling} are shown the MAEs for Test 1, Test 2 and Test 3 respectively. As we can see, in all the cases ($W=1, 2, 3$) our method clearly outperforms both the MIRT-NUFFT and the PSWF-based gridding algorithm: the MAE is significantly smaller in the regions in which the prescribed target error function $\eta$ is smaller. The same fact is highlighted in \Cref{tab:Weighted errors case gamma=1}. The numbers reported in the table are the weighted $1$-norms of the MAEs, where the weights are defined as $\log(1/\eta(x))$ for Test 2 and Test 3, while for Test 1 they are $0$ for $x < 0$ and $1$ for $x \geq 0$. The reasoning is the same as the one we adopted to construct the signals. In this case, we need a metric that privileges the frequencies in which the value of $\eta$ is lower. 

Our method also performs better than the others in the case $W = 4$, but the difference from the PSWF-based one is less pronounced, as can be seen also in the last row of \Cref{tab:Weighted errors case gamma=1}. This is expected, since the optimization of the gridding kernel becomes more and more difficult as $W$ increases (the dimension of the optimization problem grows linearly with $W$). \Cref{fig:MAE for Test 3 with W = 3 and gamma = 1} shows the MAE for Test 3, with $W = 4$ and no oversampling.

\begin{figure}[H]
    \centering
    \subfigure[]{\includegraphics[width=0.32\linewidth]{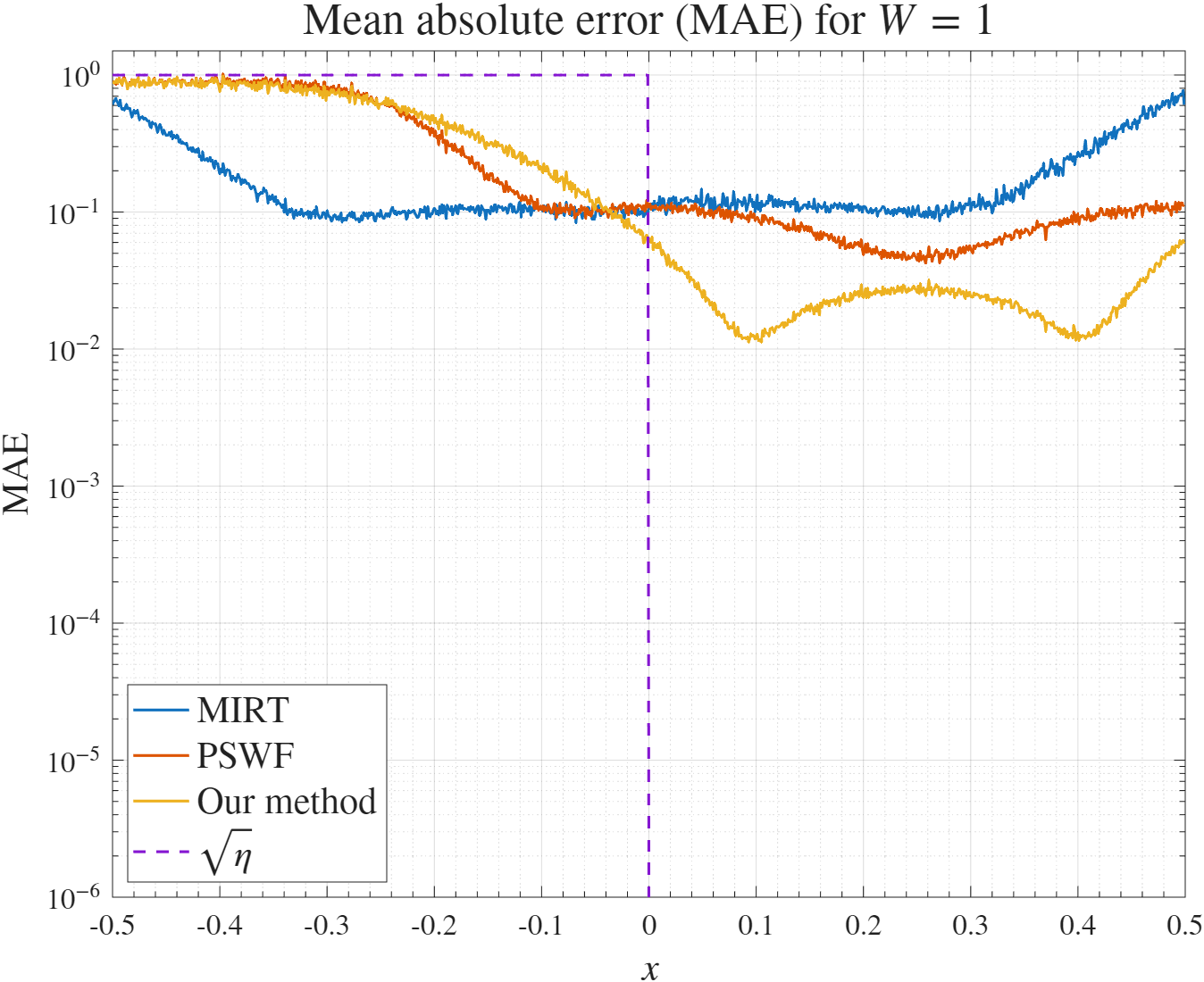}
    \label{fig:a}}
    \subfigure[]{\includegraphics[width=0.32\linewidth]{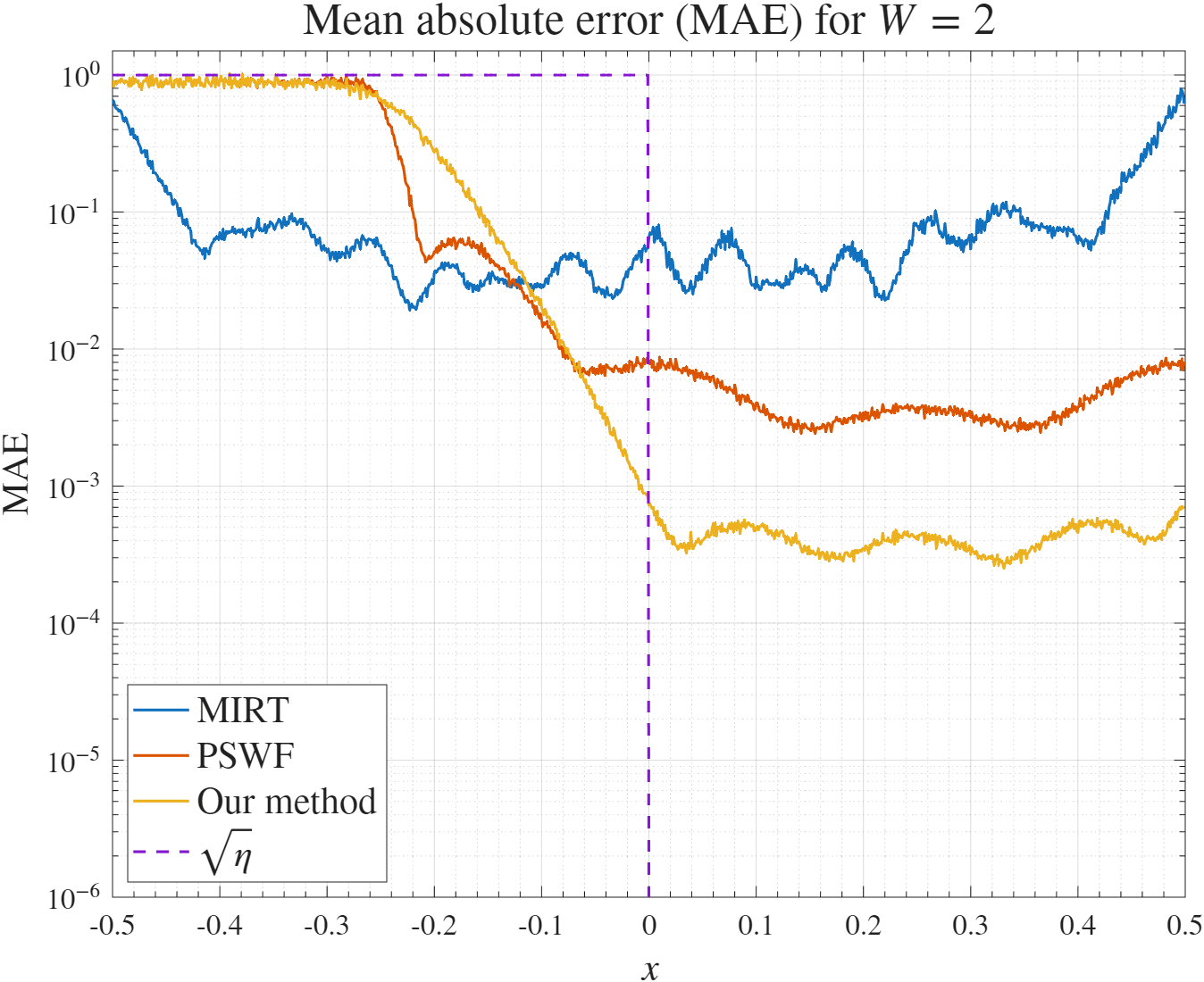}}
    \subfigure[]{\includegraphics[width=0.32\linewidth]{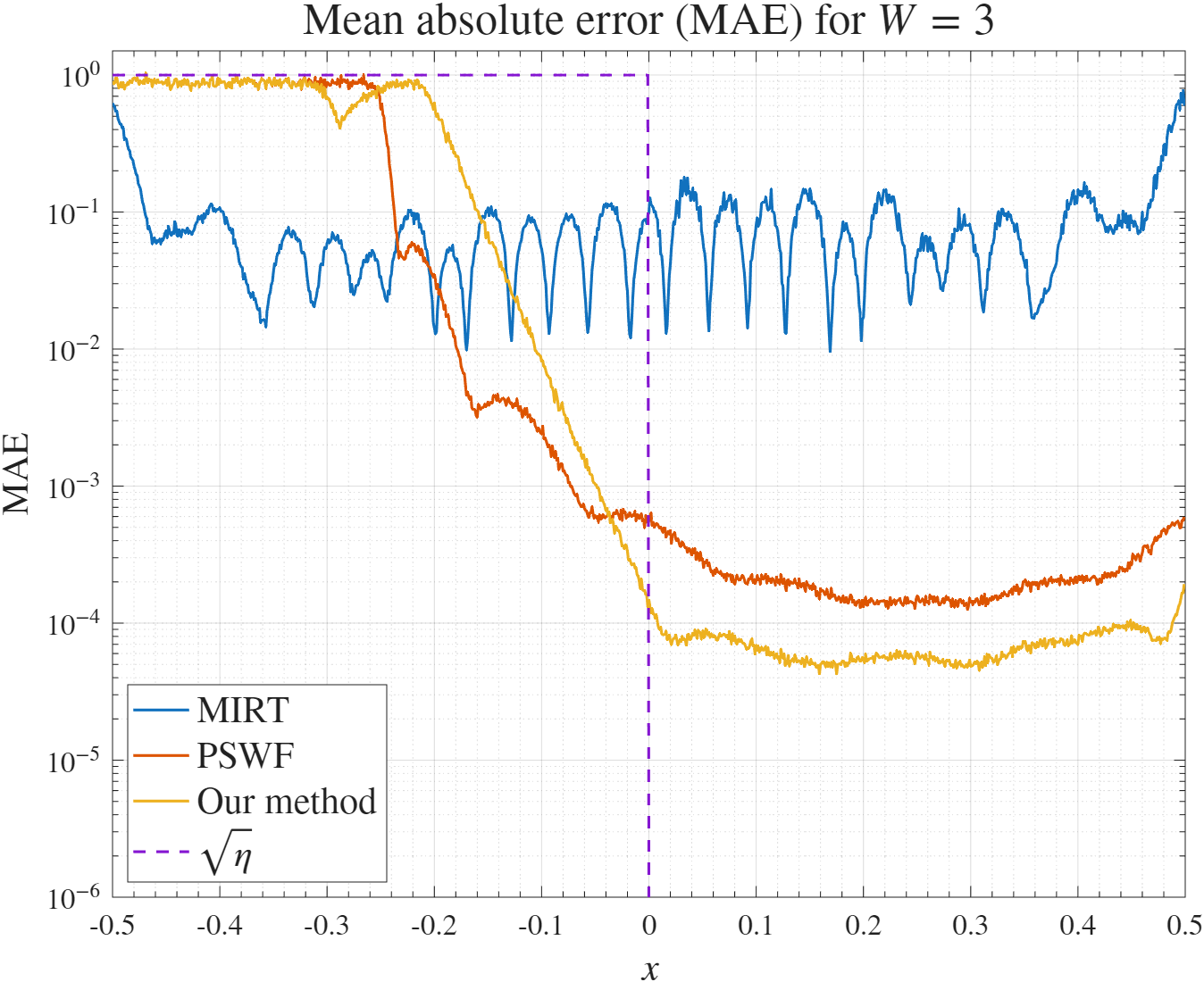}}
    \subfigure[]{\includegraphics[width=0.32\linewidth]{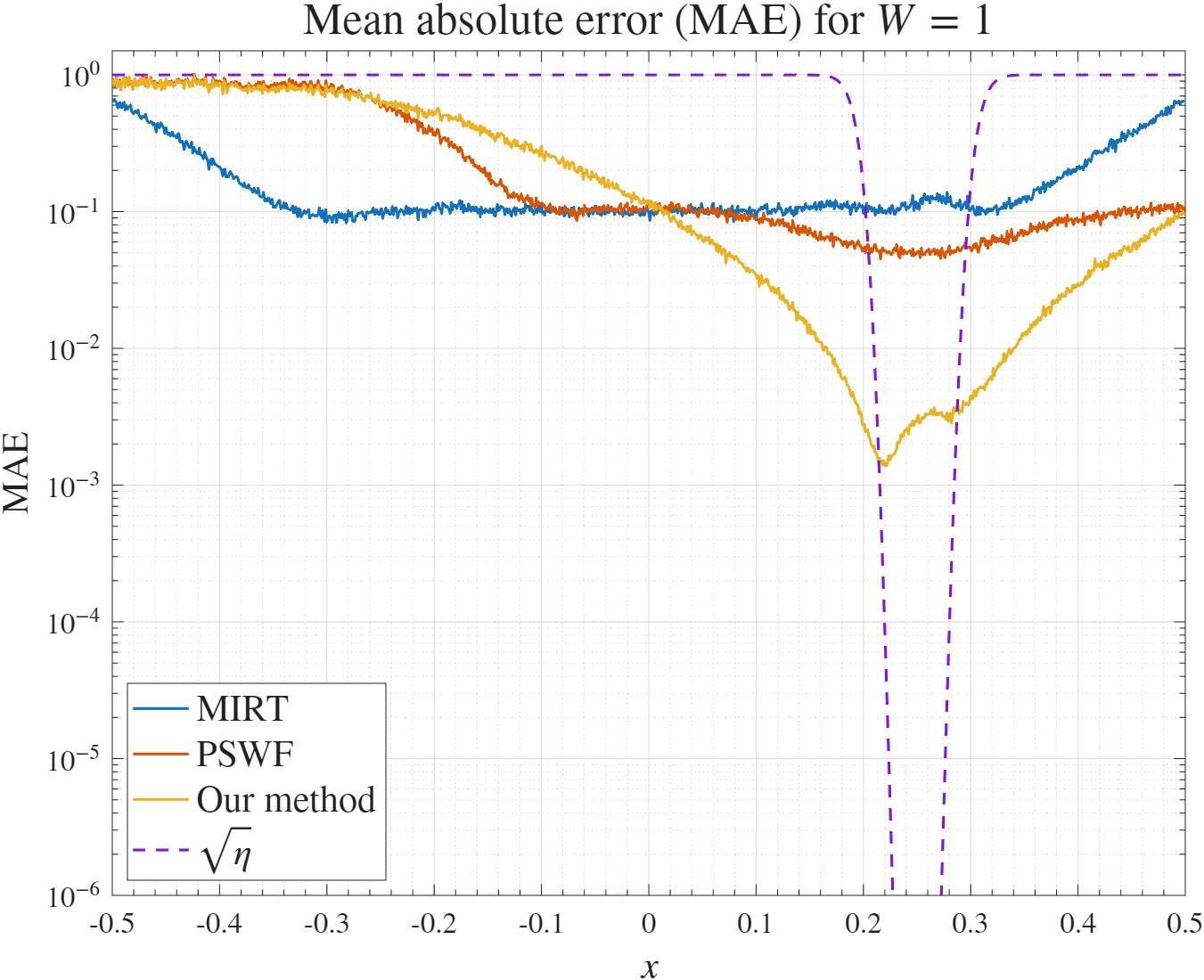}}
    \subfigure[]{\includegraphics[width=0.32\linewidth]{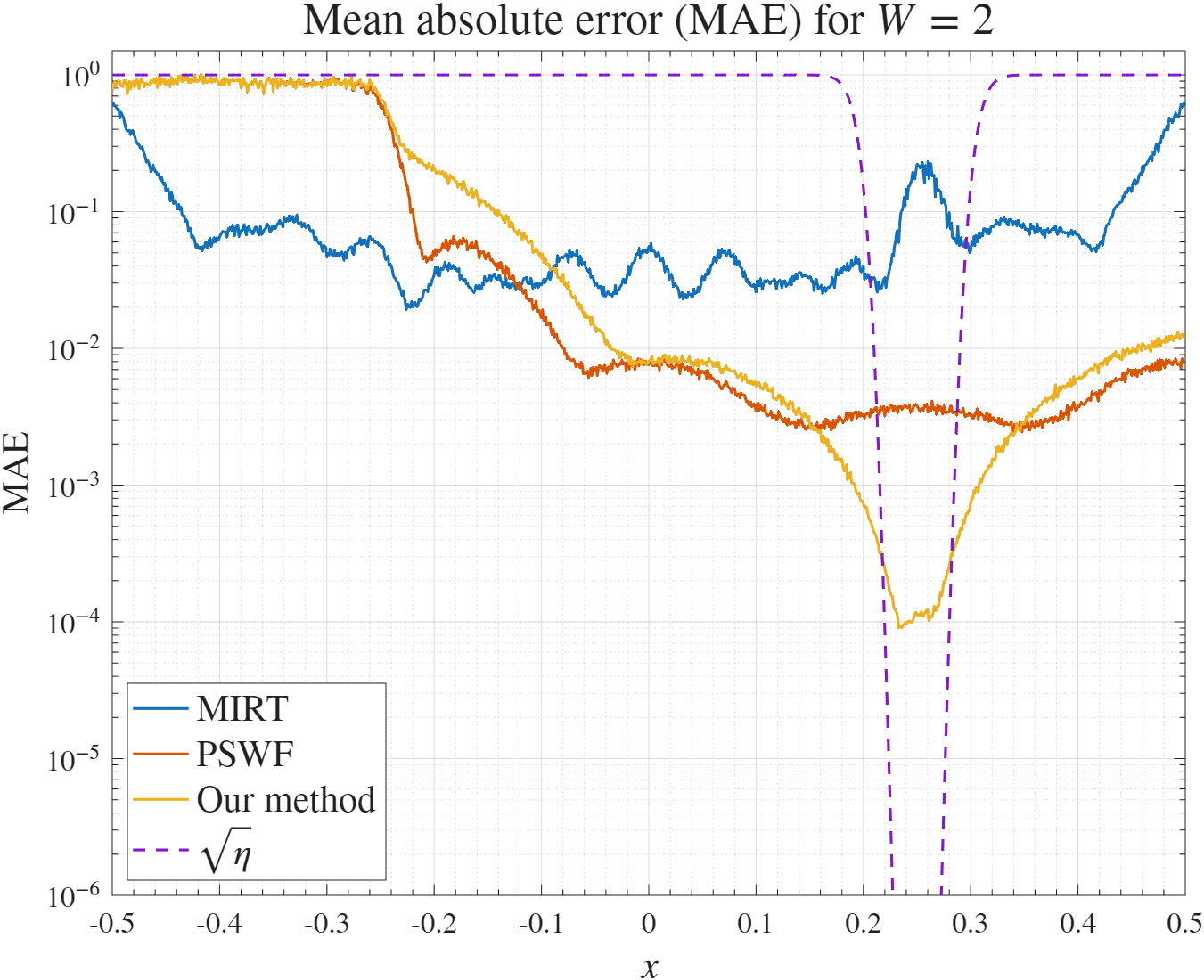}}
    \subfigure[]{\includegraphics[width=0.32\linewidth]{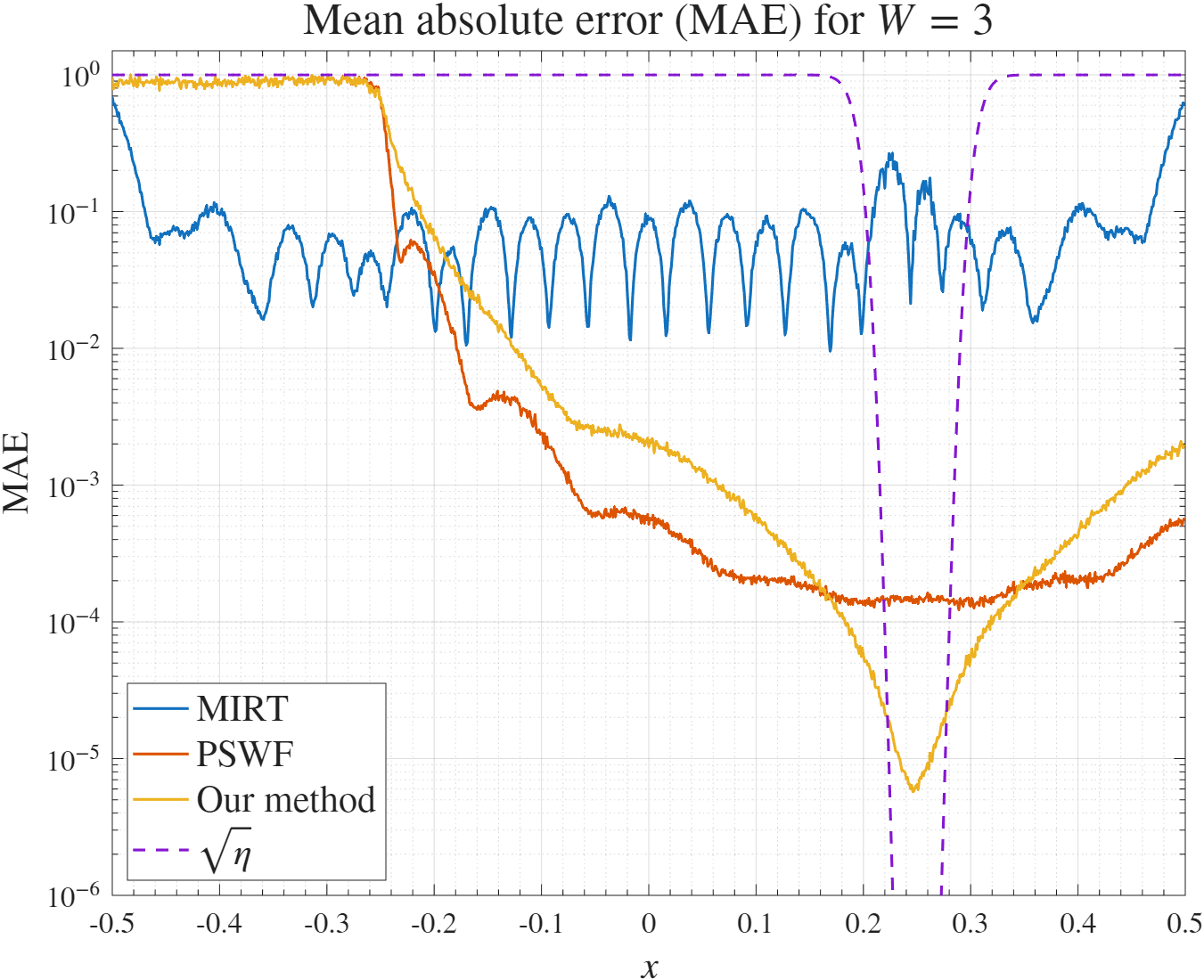}}
    \subfigure[]{\includegraphics[width=0.32\linewidth]{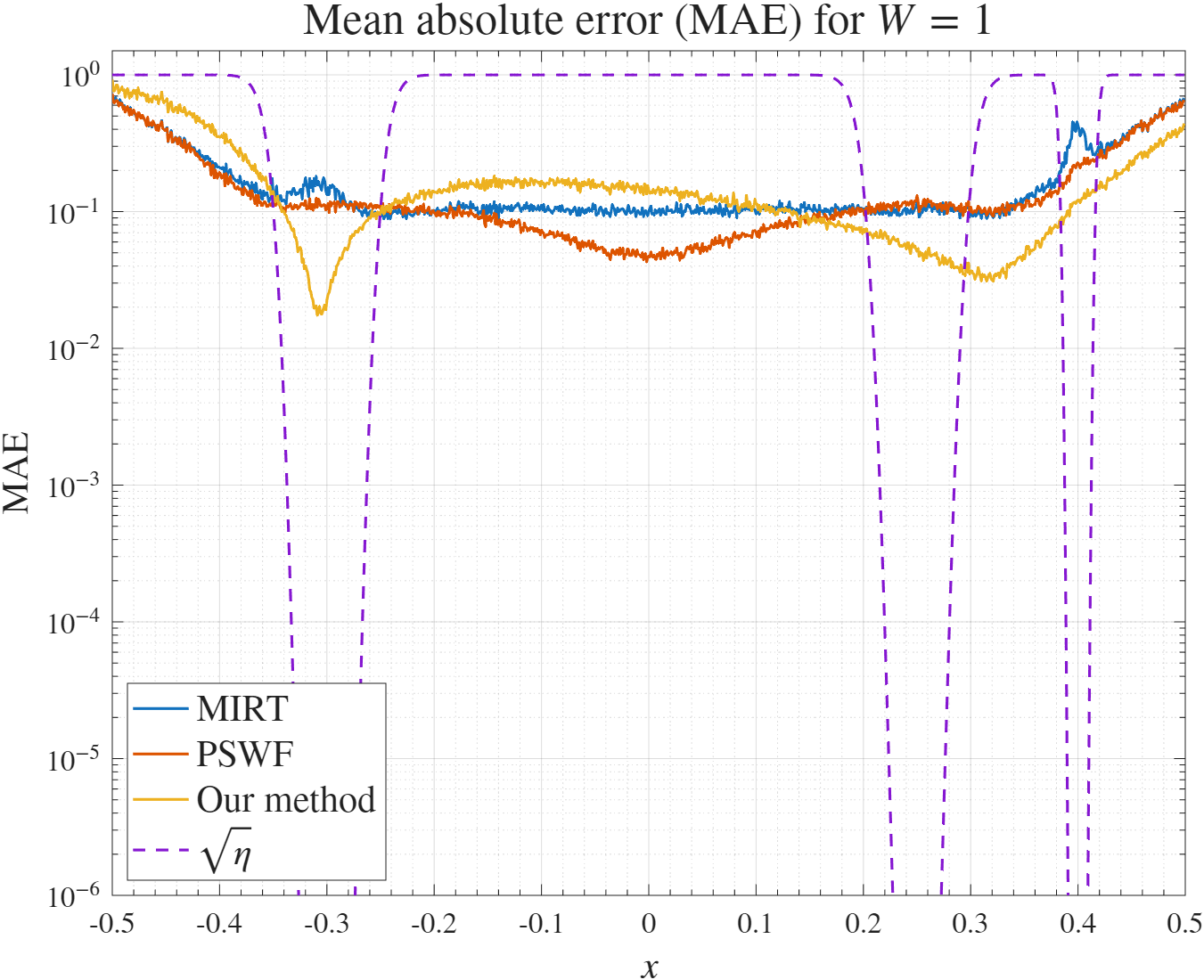}}
    \subfigure[]{\includegraphics[width=0.32\linewidth]{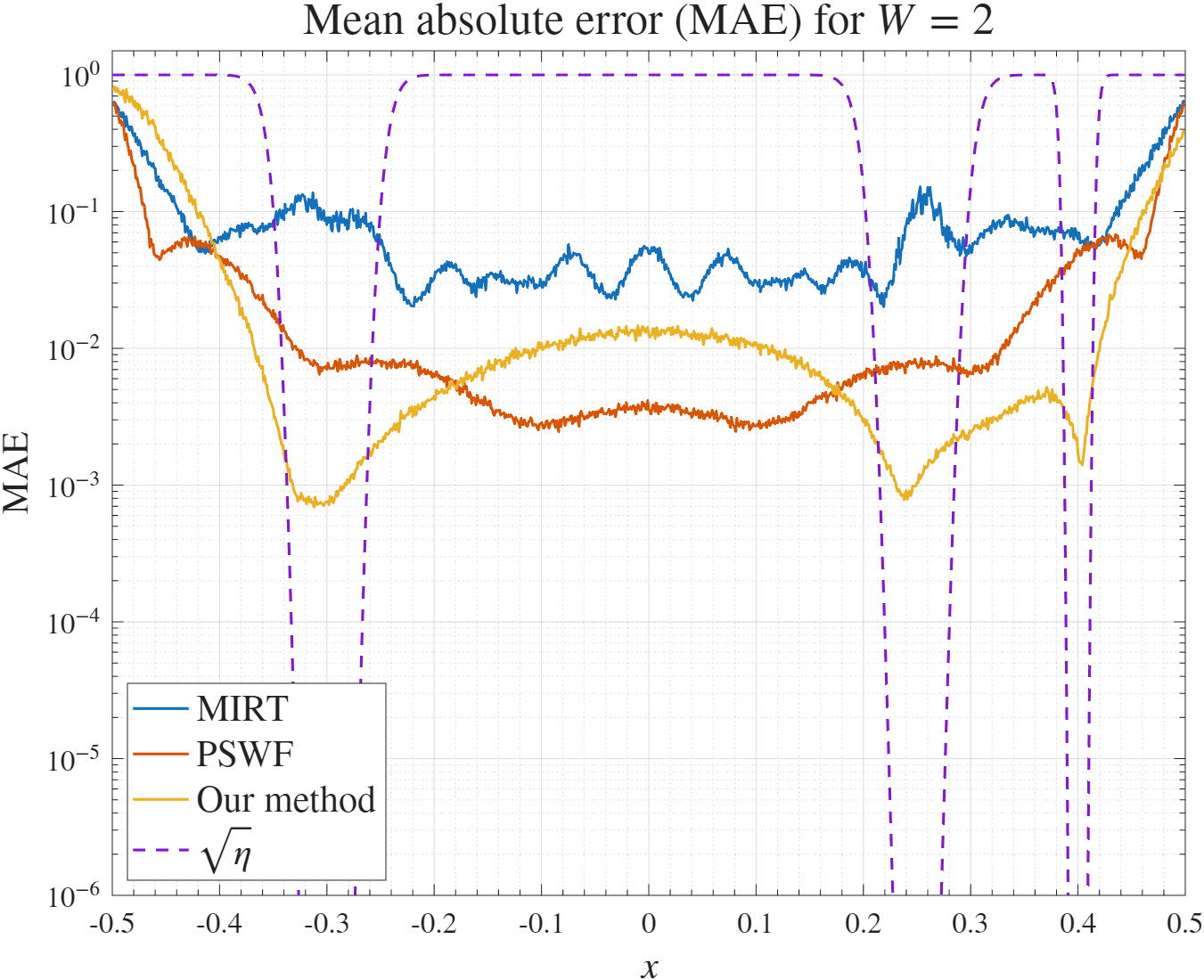}}
    \subfigure[]{\includegraphics[width=0.32\linewidth]{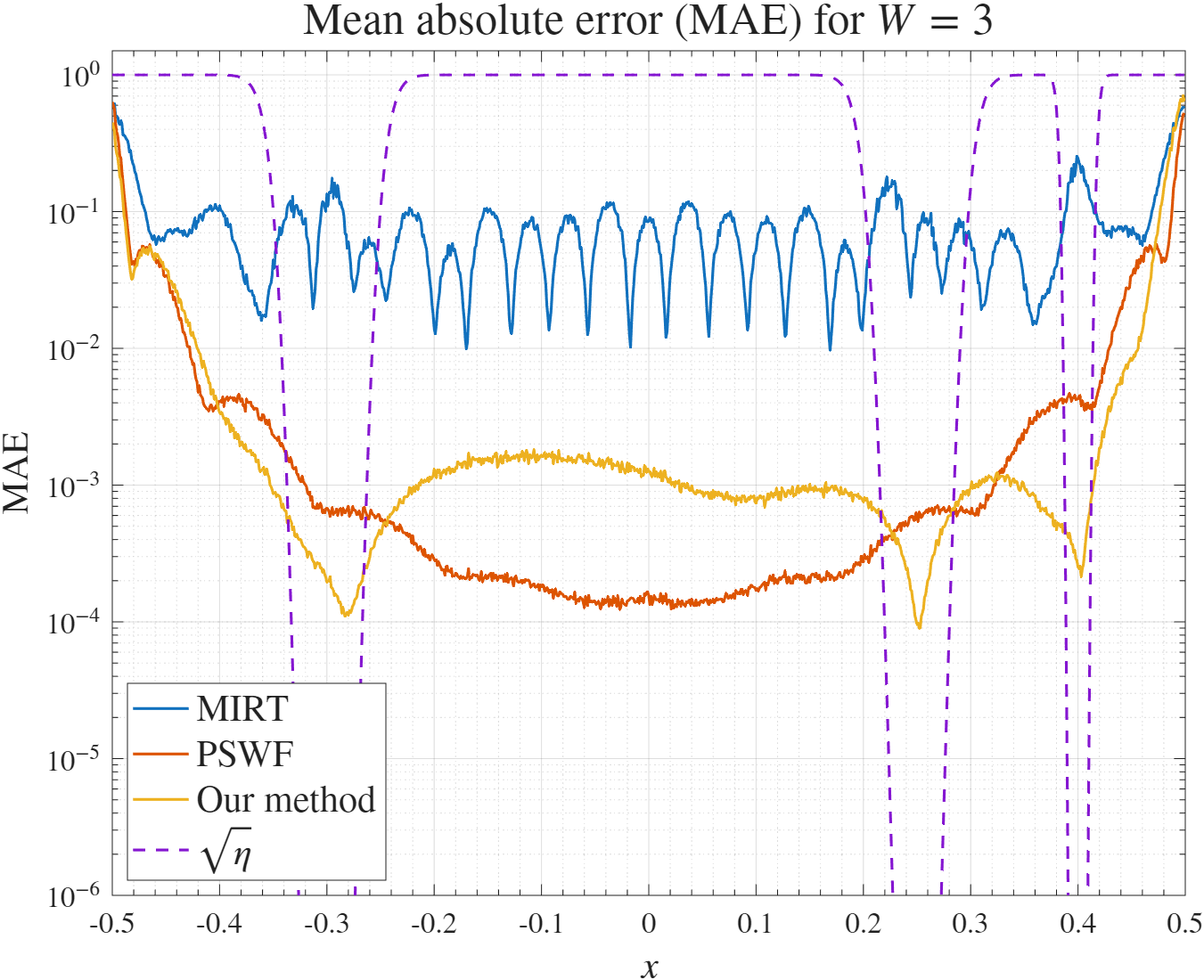}}
    \caption{MAE errors without oversampling. In the first row are shown the MAEs of Test 1, in the second row those of Test 2, and in the third row those of Test 3. In all the Tests and for all $W = 1, 2, 3$, our methods clearly outperforms both the MIRT-NUFFT and the PSWF-based gridding algorithm.}
    \label{fig:MAE without oversampling}
\end{figure}

\begin{table}[H]
    \footnotesize
    \centering
    \begin{NiceTabular}{|c|m[c]{1.15cm}|m[c]{1.15cm}|m[c]{1.15cm}||m[c]{1.15cm}|m[c]{1.15cm}|m[c]{1.15cm}||m[c]{1.15cm}|m[c]{1.15cm}|m[c]{1.15cm}|}
        \hline 
         & \multicolumn{3}{|c||}{\textbf{Test 1}} & \multicolumn{3}{|c||}{\textbf{Test 2}} & \multicolumn{3}{|c|}{\textbf{Test 3}} \\
         \hline
        \textbf{$W$} & 
        \textbf{MIRT} & \textbf{PSWF} & \textbf{Ours} & 
        \textbf{MIRT} & \textbf{PSWF} & \textbf{Ours} & 
        \textbf{MIRT}  & \textbf{PSWF} & \textbf{Ours}
        \csvreader{data/alpha_1/l1_errors.csv}{}
        {\\ \hline $\thecsvrow$ & 
        $\num{\csvcoli}$ & $\num{\csvcolii}$ & $\num{\csvcoliii}$ &
        $\num{\csvcoliv}$ & $\num{\csvcolv}$ & $\num{\csvcolvi}$ &
       $ \num{\csvcolvii}$ & $\num{\csvcolviii}$ & $\num{\csvcolix}$
        }
        \\ \hline
    \end{NiceTabular}
    \caption{Weighted errors for $\gamma = 1$. In all the cases, our method outperforms both MIRT and the PSWF-based ones.}
    \label{tab:Weighted errors case gamma=1}
\end{table}

\begin{figure}[H]
    \centering
    \includegraphics[width=0.42\linewidth]{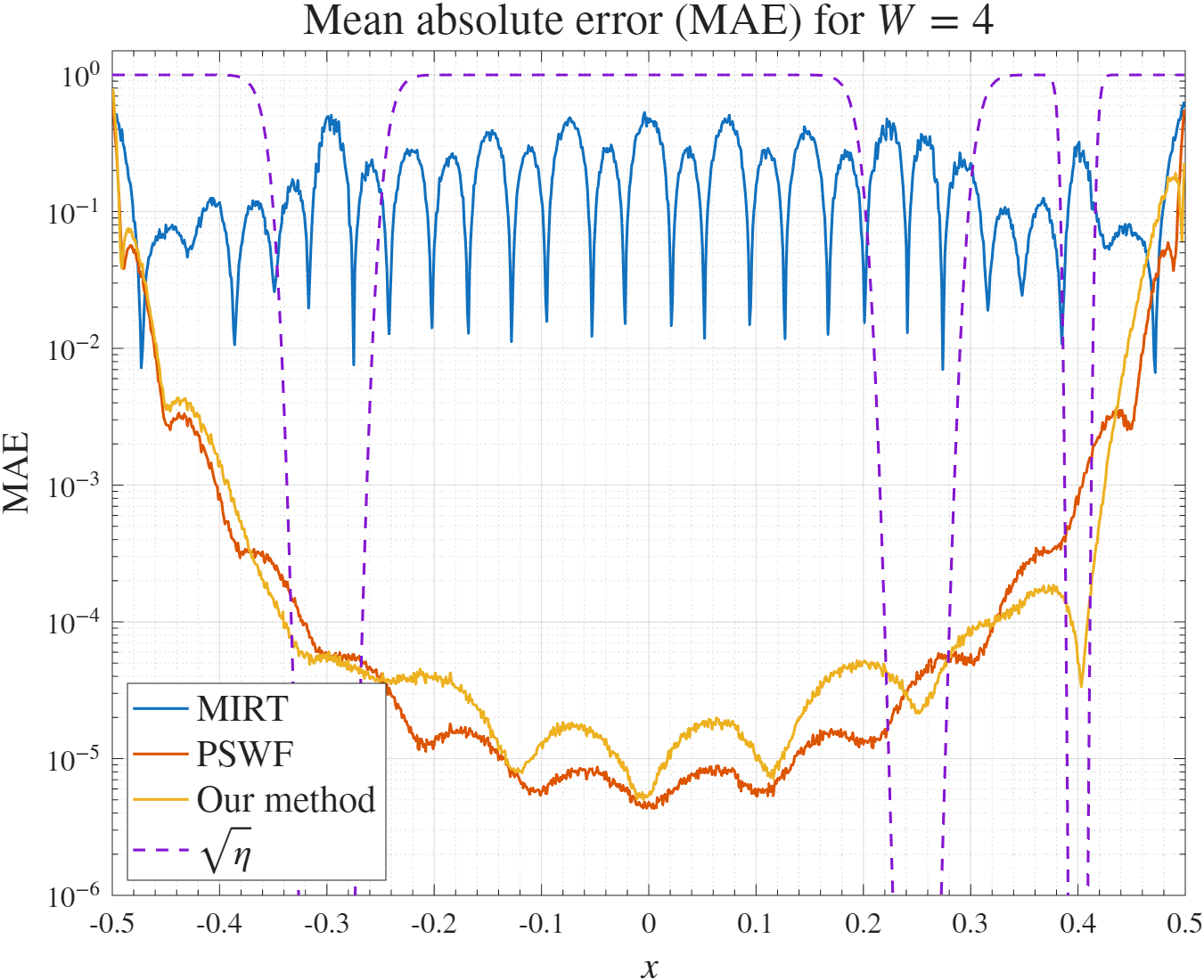}
    \caption{The MAE for Test 3, with $W = 4$ and $\gamma = 1$. Our methods performs slightly better than the PSWF-based near the frequencies $x_0 = -0.3$, $x_1 = 0.25$, and $x_2 = 0.4$, but the improvement is significant only for the latter.}
    \label{fig:MAE for Test 3 with W = 3 and gamma = 1}
\end{figure}

\null

Now we discuss the case $\gamma = 2$, for which the MIRT-NUFFT is optimized. We use as initial guess Fessler's optimized Keiser-Bessel kernel. In \Cref{fig:MAE with gamma 2} are shown the MAEs, and in \Cref{tab:Weighted errors case gamma=2} are reported the weighted $1$-norms. As we can see, sometimes our method is capable of outperforming MIRT-NUFFT (see Test 1 for $W =2$, Test 2 for $W = 2, 3$, and Test 3 for $W = 1, 2, 3$). However, there are cases where MIRT-NUFFT gives the same or even slightly better performance. The main reason for this behaviour seems to be related to the choice of the initial guess: in general, we observe that, for $\gamma = 2$, the optimization algorithm terminates very early, leading to solutions that are very similar to Fessler's Keiser-Bessel kernel. This fact suggests that Fessler's kernel is very close to a local minimum, so that the algorithm struggles to escape from it. This should not be surprising, since MIRT-NUFFT utilizes a kernel that is optimized in a way not much different from our method. The MAEs for $W = 4$, not reported here, are affected by the same issue: in this case, the optimization algorithm stops after only $6$-$7$ steps, producing a kernel which is extremely close to the initial guess, so that the MAEs for MIRT-NUFFT and our method are almost identical. 

\begin{figure}[H]
    \centering
    \subfigure[]{\includegraphics[width=0.32\linewidth]{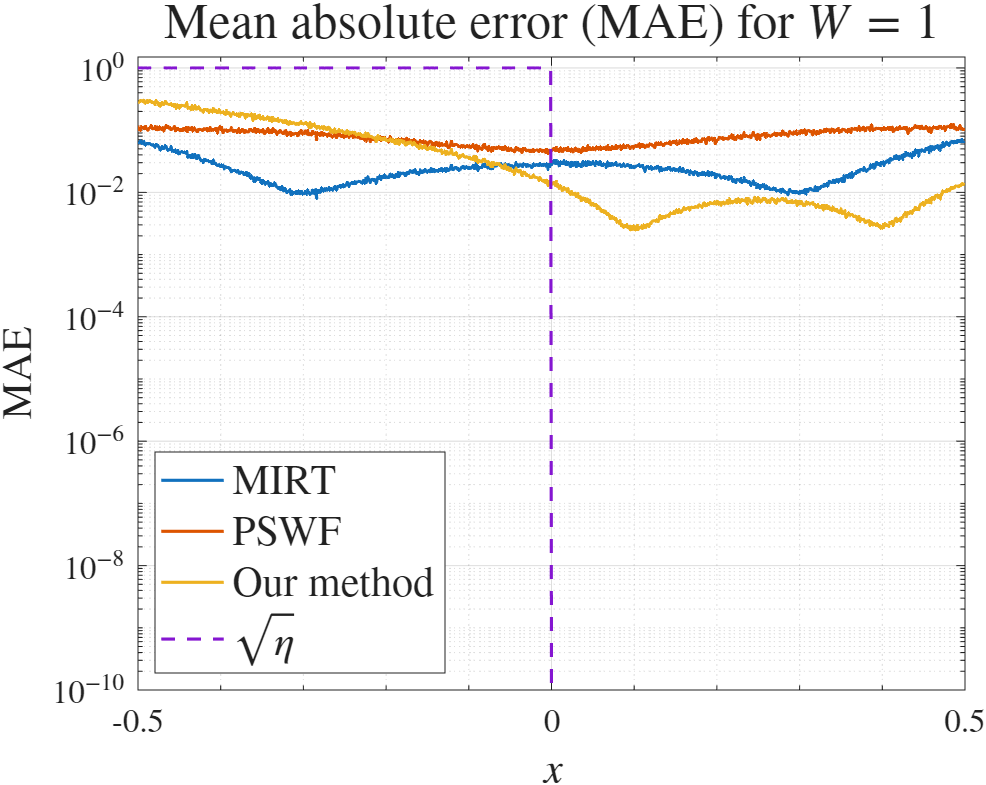}}
    \subfigure[]{\includegraphics[width=0.32\linewidth]{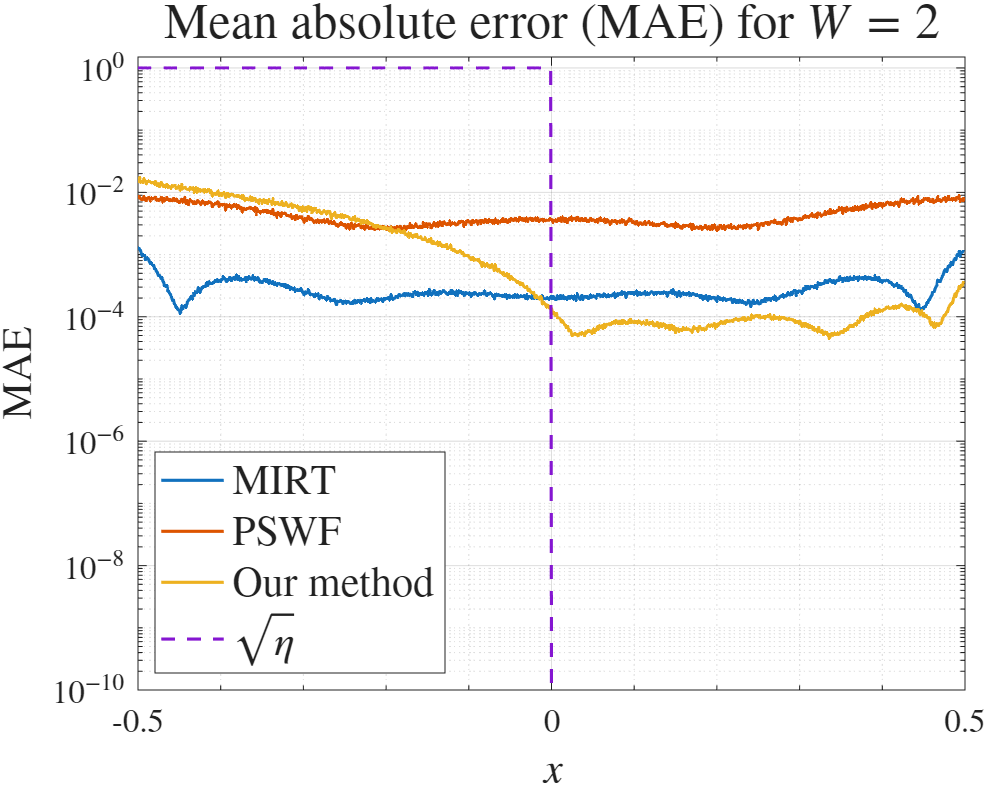}}
    \subfigure[]{\includegraphics[width=0.32\linewidth]{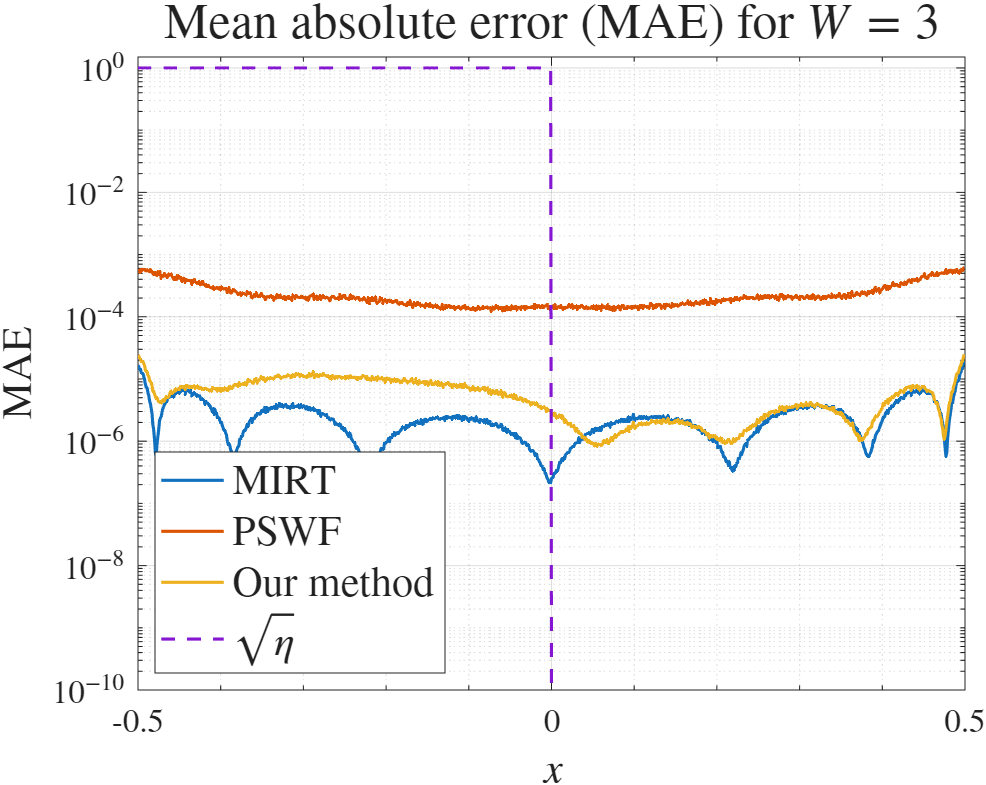}}
    \end{figure}
    \begin{figure}[H]\ContinuedFloat
    \subfigure[]{\includegraphics[width=0.32\linewidth]{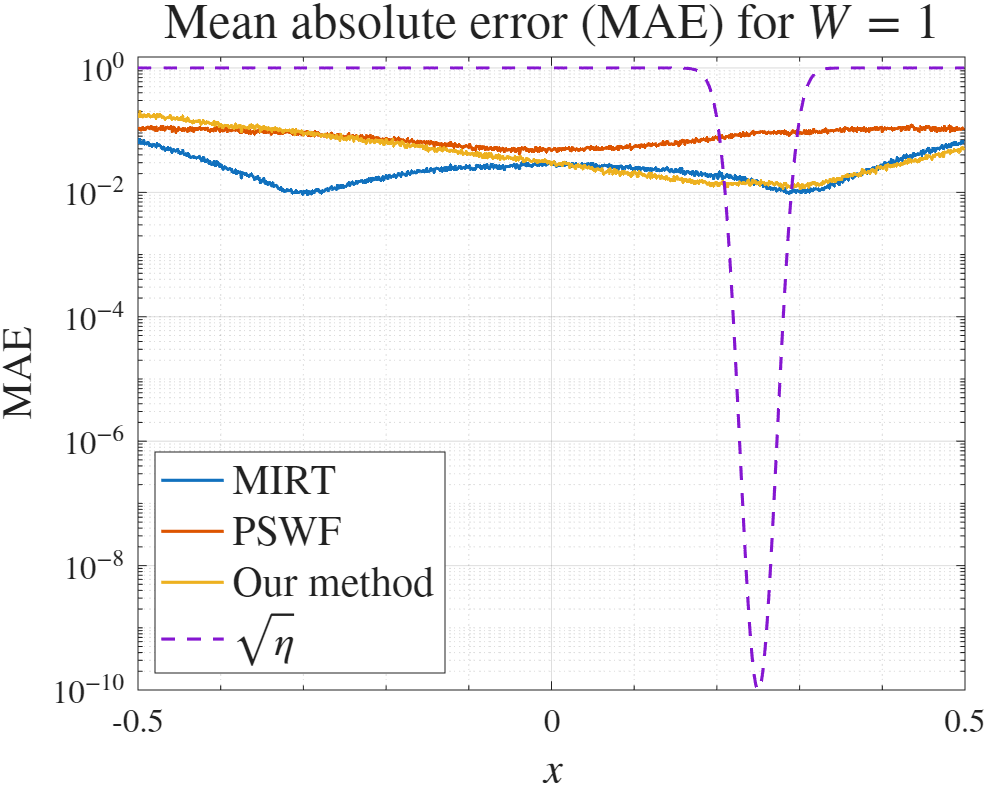}}
    \subfigure[]{\includegraphics[width=0.32\linewidth]{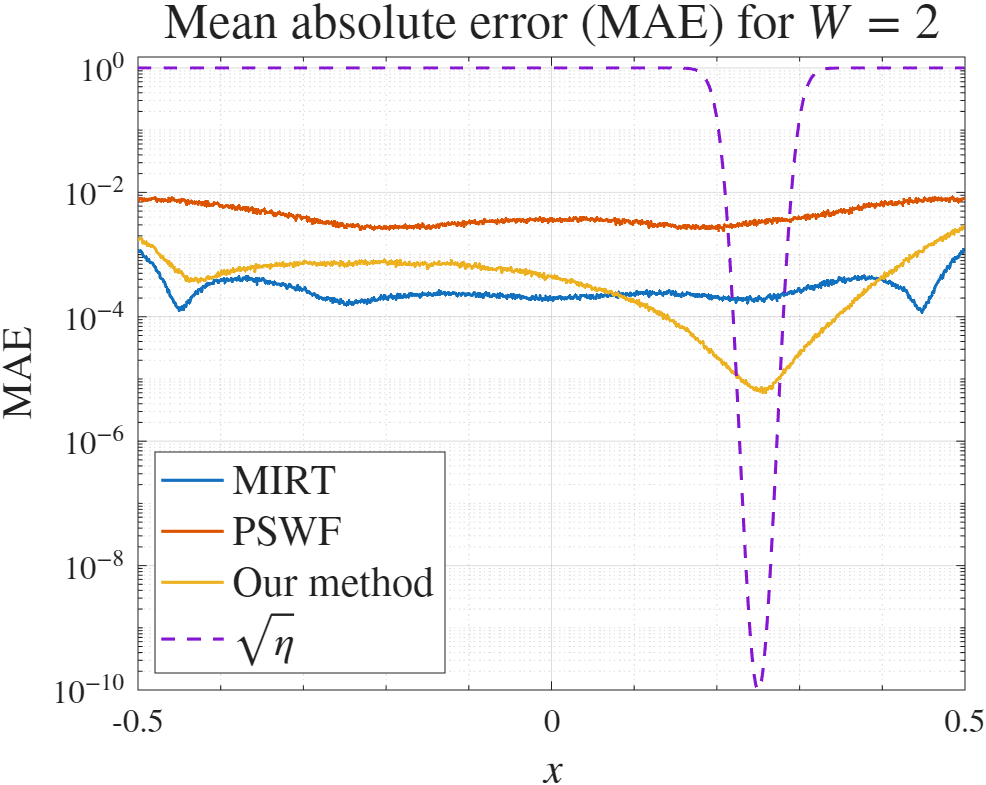}}
    \subfigure[]{\includegraphics[width=0.32\linewidth]{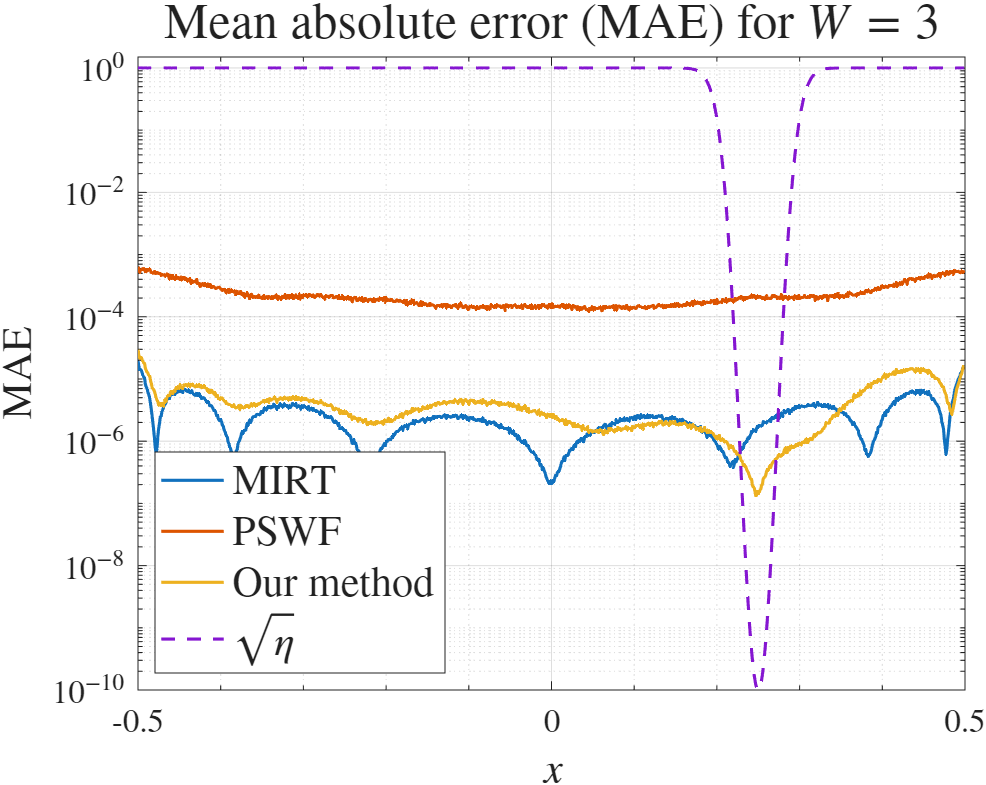}}
    \end{figure}
    \begin{figure}[H]\ContinuedFloat
    \subfigure[]{\includegraphics[width=0.32\linewidth]{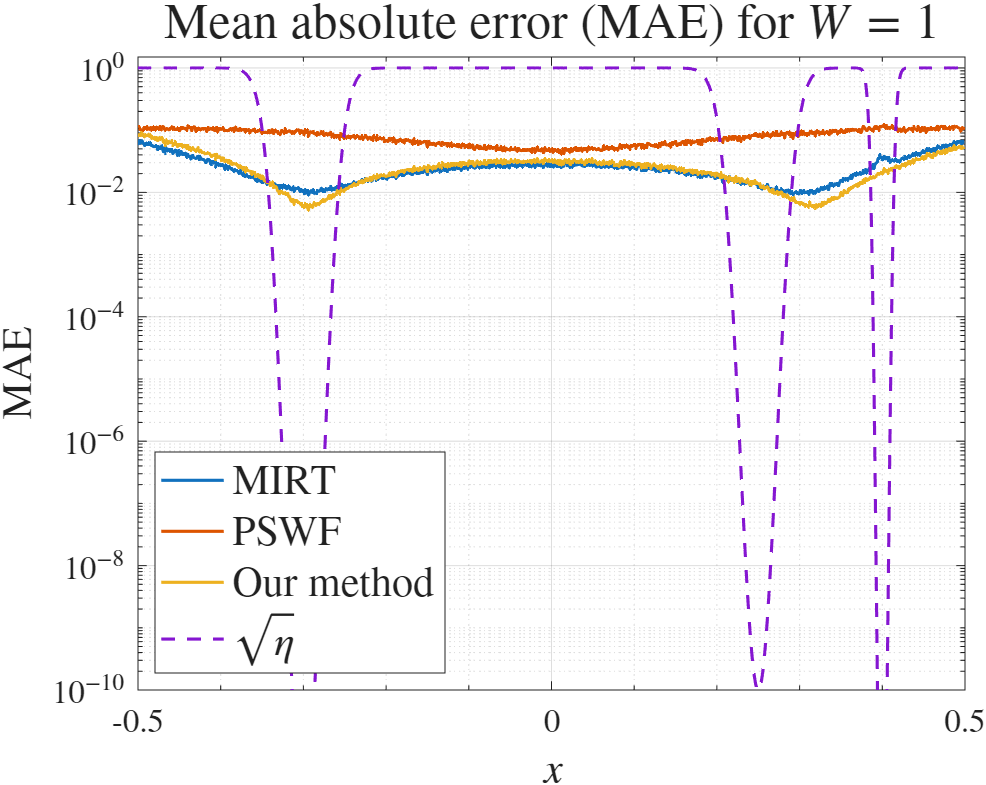}}
    \subfigure[]{\includegraphics[width=0.32\linewidth]{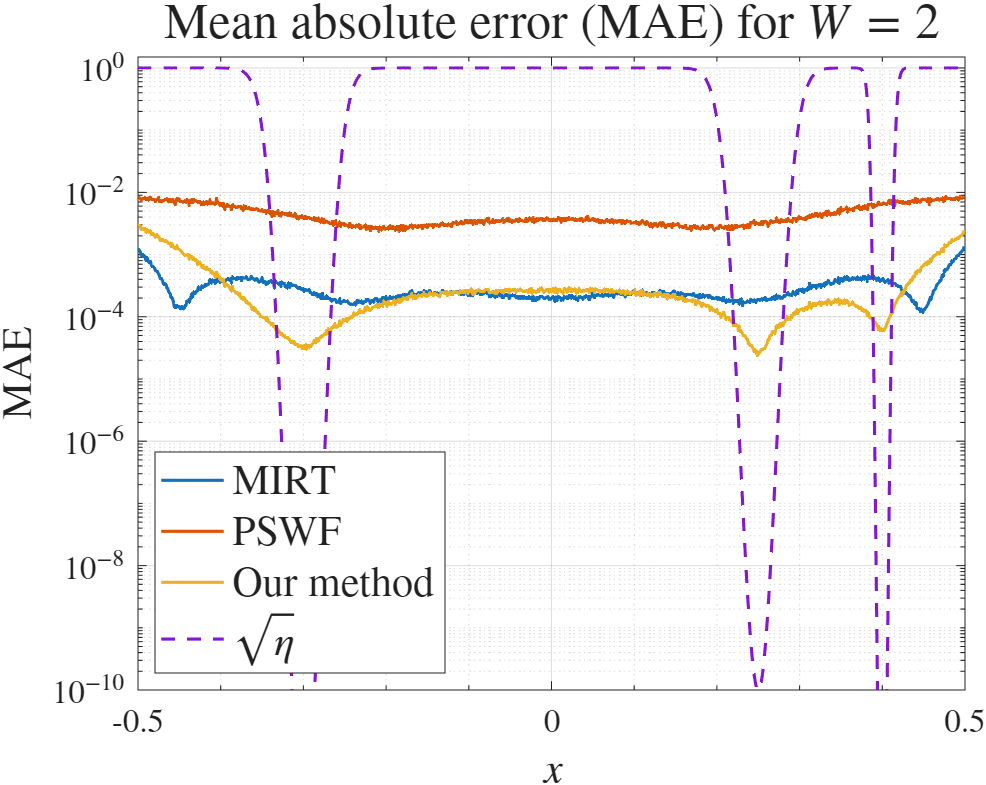}}
    \subfigure[]{\includegraphics[width=0.32\linewidth]{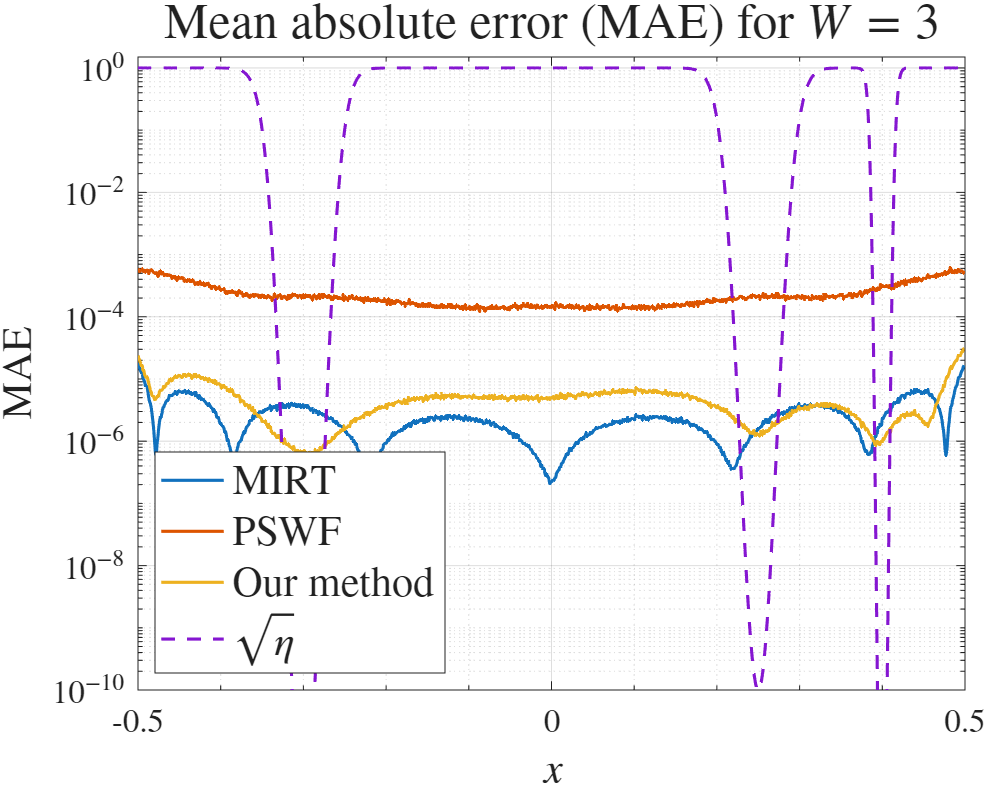}}
    \caption{The MAE errors for the case with oversampling $\gamma = 2$. In the first row are shown the MAEs of Test 1, in the second row those of Test 2, and in the third row those of Test 3. In several cases our method outperforms both the PSWF-based one and the MIRT-NUFFT. However, in some cases MIRT-NUFFT seems to perform slightly better.}
    \label{fig:MAE with gamma 2}
\end{figure}

\begin{table}[H]
    \footnotesize
    \centering
    \begin{NiceTabular}{|c|m[c]{1.15cm}|m[c]{1.15cm}|m[c]{1.15cm}||m[c]{1.15cm}|m[c]{1.15cm}|m[c]{1.15cm}||m[c]{1.15cm}|m[c]{1.15cm}|m[c]{1.15cm}|}
        \hline 
         & \multicolumn{3}{|c||}{\textbf{Test 1}} & \multicolumn{3}{|c||}{\textbf{Test 2}} & \multicolumn{3}{|c|}{\textbf{Test 3}} \\
         \hline
        \textbf{$W$} & 
        \textbf{MIRT} & \textbf{PSWF} & \textbf{Ours} & 
        \textbf{MIRT} & \textbf{PSWF} & \textbf{Ours} & 
        \textbf{MIRT}  & \textbf{PSWF} & \textbf{Ours}
        \csvreader{data/alpha_2/l1_errors.csv}{}
        {\\ \hline \thecsvrow & 
        $\num{\csvcoli}$ & $\num{\csvcolii}$ & $\num{\csvcoliii}$ &
        $\num{\csvcoliv}$ & $\num{\csvcolv}$ & $\num{\csvcolvi}$ &
        $\num{\csvcolvii}$ & $\num{\csvcolviii}$ & $\num{\csvcolix}$
        }
        \\ \hline
    \end{NiceTabular}
    \caption{Weighted errors for $\gamma = 2$. In all the cases, our method outperforms both MIRT and the PSWF-based ones.}
    \label{tab:Weighted errors case gamma=2}
\end{table}

\subsubsection{A 2D example}
We now show, as a very simple example, the use of our method for image reconstruction as in MRI. We deal with the case of a 2-dimensional nonuniformly-sampled signal (traditionally called \textit{k-space}), whose Fourier transform is a 2-dimensional image (see \cite{song_least-square_2009,sha_improved_2003, fessler_nufft-based_2007} for reference). To generate the k-space, we start with the Shepp-Logan-like phantom shown in \Cref{fig:shepp-logan phantom}. It is composed of $128\times 128$ pixels, and represents 3 small ellipses with high intensities, whose centers lie near $(x_0, y_0)=(0.25, 0.25)$, and a larger one surrounding them with a low intensity. In particular, the phantom in \Cref{fig:shepp-logan phantom} concentrates most of its information in a small neighbour of $(x_0, y_0)$.

From this image, we construct a radially sampled k-space as in \cite{song_least-square_2009} by using a nonuniform FFT. Thus, its nonuniform coordinates are given by
\begin{equation}
    \text{k-space coordinates}=\{(\theta_\alpha, r_\beta)\}_{\substack{\alpha = 0, \dots,127 \\ \beta=0, \dots, 256 }}\subseteq [0, 2\pi)\times[0, 0.5), \quad \text{with }\quad \begin{cases}
     \theta_\alpha &= 2\pi\alpha\phi \mod{2\pi},   \\
     r_\beta &=\frac{\beta}{256}\cdot0.5, 
    \end{cases}
\end{equation}
where $\phi$ is the golden ratio $\phi = \frac{1+\sqrt{5}}{2}$. Finally, we multiply the k-space values by the density compensation function $f(\theta_\alpha, r_\beta) = r_\beta$, whose purpose is to compensate for differences of densities in the radial sampling.

The second step is to construct a (2-dimensional) target error function $\eta(x, y)$, which we assume for simplicity of the form $\eta(x, y)=\eta_1(x)\eta_2(y)$. Suppose that we know that the meaningful information is around $(x_0, y_0)$; then we can choose $\eta_1$ and $\eta_2$ to be the same as the target error function of Test 2 in \Cref{ch:Numerical results} (see \Cref{fig:Test target profile functions} (b)). Next, we choose $W = 2$ and $\gamma = 2$. Notice that the gridding kernel and the correcting function have to be 2-dimensional as well; again, we assume $C(u, v) = C_1(u)C_2(v)$ and $h(x, y)=h_1(x)h_2(y)$, and since $\eta_1 = \eta_2$, it follows that $C_1 = C_2$ and $h_1 = h_2$.

In \Cref{fig:2D log errors} are shown the log10-maps of the errors produced by MIRT and by our optimized kernel (whiter areas denote a smaller error). As we can see, our method significantly reduces the error around $(x_0, y_0)$: indeed, in the MIRT log10-map we can clearly see the shape of the ellipses with higher intensities, as they are darker (i.e. the error in that area is large), while in the log10-map of the proposed method the ellipses are almost unrecognizable. The same fact can be seen if we compute the $1$-norm of the weighted errors (in this case, the weights $w(x,y)$ are given by $w(x, y)=w(x)w(y)$, where $w_1=w_2$ are the same weights used in \Cref{ch:Numerical results}): for MIRT the value of the weighted error is $745.86$, while for the proposed method it is $247.48$. 

\begin{figure}[H]
    \centering
    \includegraphics[width=0.4\linewidth]{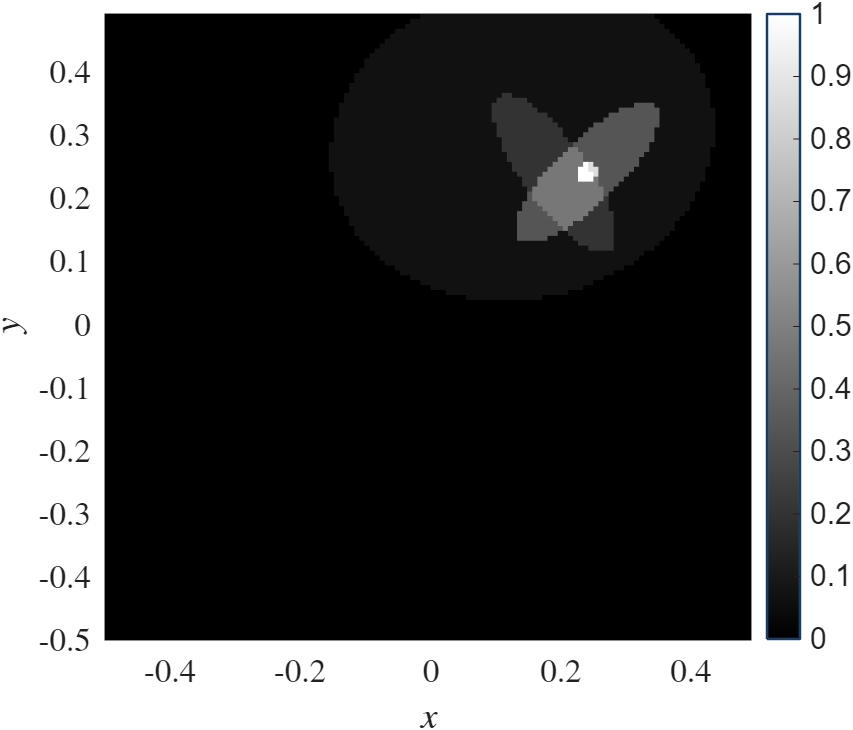}
    \caption{The Shepp-Logan-like phantom we used to generate the k-space, that is the image we want to reconstruct. The 4 ellipses composing it are concentrated around the point $(x_0, y_0)=(0.25, 0.25)$.}
    \label{fig:shepp-logan phantom}
\end{figure}
\begin{figure}[H]
    \centering
    \includegraphics[width=0.7\linewidth]{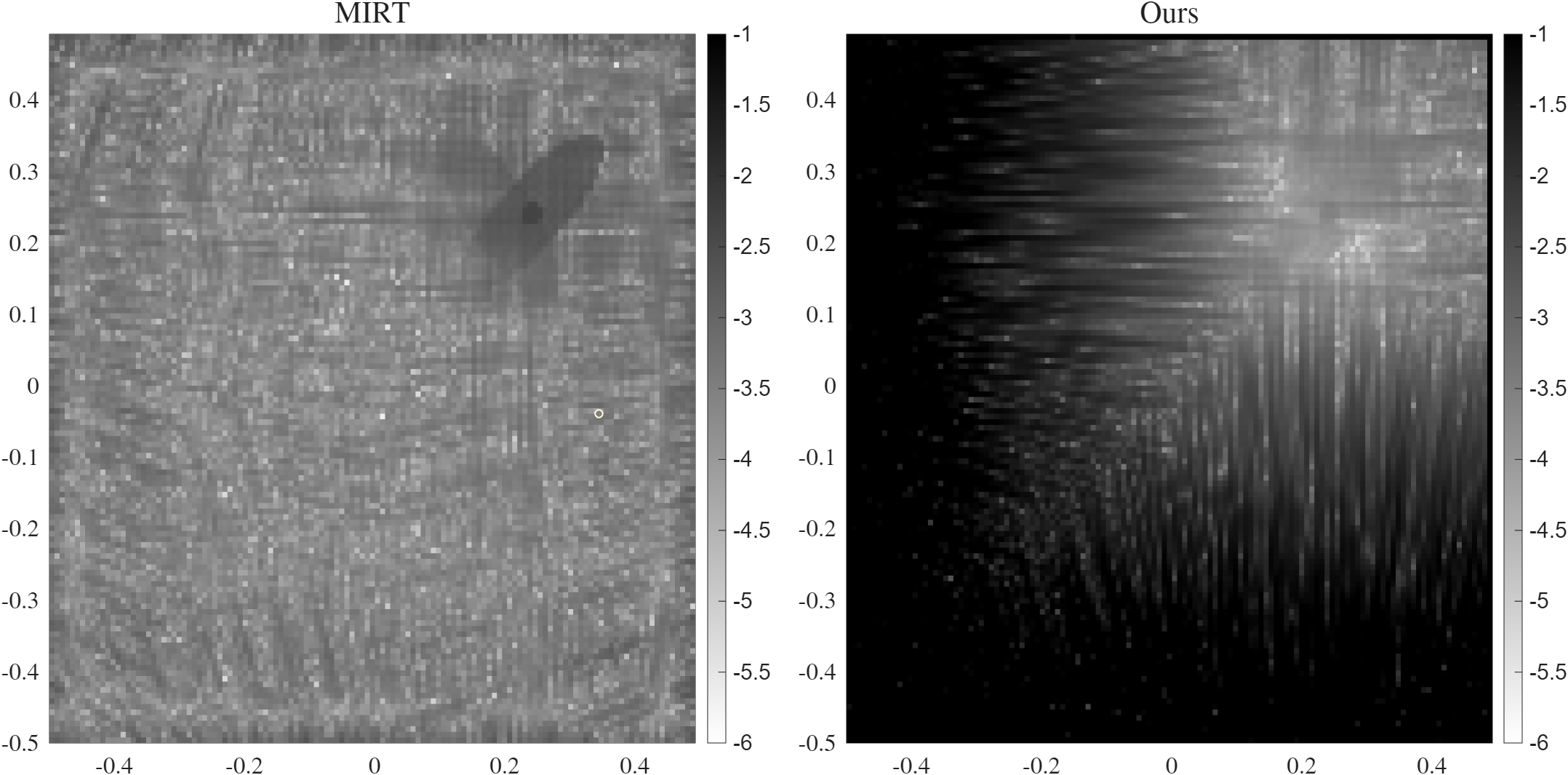}
    \caption{The log10-maps of the errors produced by MIRT and the proposed method. Whiter areas correspond to areas with smaller error. Notice that in the MIRT error the shapes of the ellipses with higher intensities are clearly visible, while in the one proposed by the authors is hardly recognizable.}
    \label{fig:2D log errors}
\end{figure}

\subsubsection{About the oversampling factor}
Finally, we also consider other oversampling factors, namely $\gamma \in \{1.25, 1.5, 1.75\}$. In general, the same observations above can be noticed for these cases. For small $\gamma$, choosing the first PSWF as initial guess leads to better solutions, while for values of $\gamma$ closer to $2$, Fessler's Keiser-Bessel kernel is preferable. Optimization becomes more difficult as the value of $W$ increases. 

From the definition of $\Lambda$ in \cref{eq:loss gridding definition}, we see that $\gamma$ acts as a zooming factor. Let us stress the dependence of the error shape operator from $\gamma$ by denoting it $\Lambda_\gamma$, so that 
\begin{equation}
    \Lambda_\gamma C(x) = \frac{\sum_{m \neq 0} |\widehat{C}(x/\gamma +m)|^2}{\sum_{m \in \Z} |\widehat{C}(x/\gamma +m)|^2}.
\end{equation}
Then we see that $\Lambda_\gamma C(x) = \Lambda_1C(x/\gamma)$ and, for any $p \in [1, \infty)$,
\begin{equation}
    \|\Lambda_\gamma C\|_{p, \fov} = \gamma\cdot \|\Lambda_1 C \|_{p, [-\frac{1}{2 \gamma}, \frac{1}{2\gamma})},
\end{equation}
and, for $p = \infty$,
\begin{equation}
    \|\Lambda_\gamma C \|_{\infty, \fov} = \|\Lambda_1 C \|_{\infty, [-\frac{1}{2\gamma}, \frac{1}{2\gamma})}.
\end{equation}
This means that, introducing oversampling, the error is basically the same given by the restriction of $C$ to $[-\frac{1}{2\gamma}, \frac{1}{2\gamma})$. Moreover, we have
\begin{equation}
\label{eq:relation between minima with different oversampling}
  \min_{x \in \fov} \Lambda_\gamma C(x) = \min_{x \in [-\frac{1}{2\gamma}, \frac{1}{2\gamma})} \Lambda_1 C(x) \geq \min_{x\in\fov} \Lambda_1 C(x).  
\end{equation} 
This final inequality shows that for each $x$, the error $\Lambda_\gamma C(x)$ cannot be smaller than $\Lambda_1 C(x)$. In particular, if $\Lambda_1 C(x_0)$ is minimum for some $x_0 \in \Omega$, then oversampling cannot help to further decrease the error at $x_0$.

All these facts are clearly visible in \Cref{tab:MAEs for different gammas}, where the weighted errors of Test 2 and $W = 2$ are reported for different values of $\gamma$ and different kernels. For MIRT and the method proposed by the authors, the kernel changes with $\gamma$; more specifically, for the proposed method, for each $\gamma$ we find the optimal kernel depending on $\eta$. This results in smaller errors as $\gamma$ increases. However, for the PSWF kernel shifted by $x_0 = 0.25$, oversampling makes no difference, as expected. Notice that if we did not shift it, we would see a gradual improvement, since in general $\Lambda C_{\text{PSWF}}(x)$ is smaller around $x = 0$, where $C_{\text{PSWF}}$ denotes the PSWF kernel without shift (see for example \Cref{fig:MAE without oversampling} (h)). However, this improvement cannot be better than what we get with the shifted version because of \cref{eq:relation between minima with different oversampling}.
\begin{table}[H]
    \footnotesize
    \centering
    \begin{tabular}{|c|c|c|c|}
         \hline
        \textbf{$\gamma$} & 
        \textbf{MIRT} & \textbf{PSWF} & \textbf{Ours}
        \csvreader{data/oversamp/l1_errors.csv}{}
        {\\ \hline 
        $\num{\csvcoli}$ & $\num{\csvcolii}$ & $\num{\csvcoliii}$ & $\num{\csvcoliv}$
        }
        \\ \hline
    \end{tabular}
    \caption{The weighted $1$-norms for Test 2, $W = 2$ and different oversampling parameters $\gamma$. For MIRT and the method proposed by the authors, the higher is $\gamma$, the smaller is the weighted error, as the kernel varies. For the shifted PSWF kernel, the weighted error is stuck at a value of around $4$.}
    \label{tab:MAEs for different gammas}
\end{table}

\section{Discussion and conclusions}
\label{ch:Discussions and conclusions}
\subsection{Achievements of this work}
In this paper, we clarified the meaning of \textit{optimal kernel} in the context of gridding algorithms: we proposed an interpretation of this concept through the lens of vector optimization theory and developed a new technique to find optimal kernels based on it. 

The results we were able to obtain were superior to those given by the PSWF, which is usually considered the best gridding kernel for the case without oversampling. In particular, our method was capable of obtaining gridding kernels that, on average, produce errors smaller than those produced by the PSWF, at least in some prescribed areas of the spectrum. For the case with oversampling, and in particular the case $\gamma = 2$ for which the MIRT-NUFFT is optimized, our method usually lead to comparable or even better performance than Fessler's optimized Keiser-Bessel kernel. 

We believe that the existence of the gridding error shape operator $\Lambda$ is important in itself. Indeed, if $C$ belongs to the Pareto front of $\Lambda$, then $C$ is minimal in a very specific sense that seems to be related to the uncertainty principle. From this point of view, we could argue that the Pareto front of $\Lambda$ is composed of those functions that are somewhat \textit{extremal} for the uncertainty principle. Thus, studying the properties of $\Lambda$ could lead to a better understanding of the relation between timelimiting and bandlimiting operations, as asked in \cite{landau_prolate_1961}.

Yet, we know very little about $\Lambda$ and, at the time of this work, we are not even able to determine if $\Lambda$ possesses a Pareto front or not, although the results of this paper strongly suggest that the answer should be positive.

\subsection{About \texorpdfstring{$\eta$}{eta}}
\label{ch:About eta}
The definition of the target error shape function $\eta$ may seem somewhat obscure and requires some clarification. One could consider it as a sort of weight function that describes the maximum error that one would like to obtain at each frequency $x \in [-\frac{1}{2}, \frac{1}{2})$. For example, if $\eta(0.25) = 10^{-7}$, then one would like to get from the gridding algorithm a spectrum that, at frequency $x = 0.25$, differs from the ground truth for no more than $10^{-3.5}$. Therefore, $\eta$ defines for each frequency $x$ the maximum discrepancy from the ground truth that one would like the reconstructed spectrum to satisfy. 

Of course, it can happen that the conditions defined by $\eta$ are too restrictive (for example, because violating the lower bound proved in \Cref{thm:Lambda lower bound}). This is the reason why vector optimization is useful: it lets us find the gridding kernel $C$ such that $\Lambda C$ is Pareto optimal and as close as possible to $\eta$ in some $p$-distance. 

It remains to understand how one actually defines $\eta$. It follows from what was said above that the definition depends on the application. For example, suppose that we expect the most meaningful frequencies of a signal to lie in a very small portion of the spectrum. This could be the case of MRI, where only a portion of an image needs to be analysed for a diagnosis, like in \cite{cao_mr_1992,cao_locally_1995, kuperman_locally_1998}. The degree of accuracy that is needed in a certain area of the spectrum has to be decided \textit{a priori} by an external expert (a radiologist in the case of MRI), usually called a \textit{decision maker}, and then translated into a target error by some procedure. This approach is known in literature as that of \textit{interactive methods}, that is, methods which involve the subjective opinion of an expert in the field. There are many, like trial and error, interactive reoptimization, trade-off based methods like Guided Multi-Objective Evolutionary Algorithm (G-MOEA) and more \cite{meignan_review_2015}. The typical procedure is called the \textit{human-in-the-loop} (HITL) approach and is described in \Cref{fig:Human-In-The-Loop approach}.

\begin{figure}[h]
    \centering
    \includegraphics[width=0.6\linewidth]{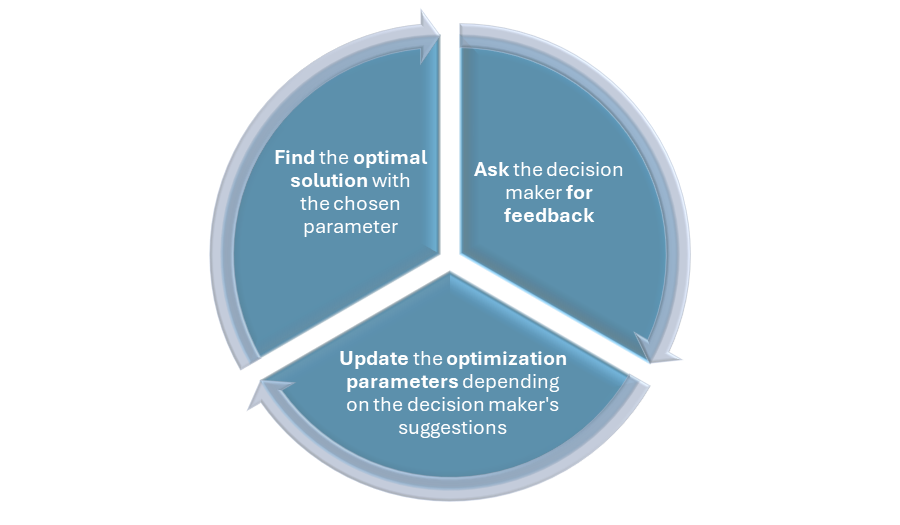}
    \caption{Human-In-The-Loop approach}
    \label{fig:Human-In-The-Loop approach}
\end{figure}

Another approach, which does not involve any human decision maker, relies on machine learning. In this case, the idea is to learn $\eta$ from the data. Suppose for example that we are given a dataset composed of couples $(X, y)$, where $X$ is an image and $y$ a label living in some space. In a medical context, $X$ could be a MRI scan, and $y$ a diagnosis. Then we could try to train a machine learning model to recognize which areas of $X$ were the most important in determining the label $y$ and label each pixel with an importance score value. In this way, we produce a heat-map, usually called \textit{saliency} or \textit{attention map} \cite{saleem_explaining_2022}, which could be used to extract a function $\eta$: indeed, we would like to have a smaller error in those areas where the importance score is higher. The reader can learn about an application of saliency maps in the detection of glaucoma in \cite{li_large-scale_2020}.

The authors intend to investigate this approach in future work.

\section*{Acknowledgements}
We thank Prof. Jeffrey Fessler of the University of Michigan for his advice, and we also thank Prof. Giuseppe Molteni and Prof. Marta Calanchi of Università degli Studi di Milano for useful conversation.

\null

\noindent This work was financed by \textit{Next Generation UE}, Mission 4, Component 1, CUP G43C23001640006.





\printbibliography

\newpage
\appendix
\section{Proofs}
\label{ch:Proofs}
\begin{proof}[Proof of \Cref{thm:Poisson summation formula applied to autocorrelation}]
    Since $a_C$ has finite support (because $C$ has), $a_C \in L^1(\R)$ trivially, and thus we can apply the Fourier transform to it. Moreover, $C \in \cont(I_W) \subseteq L^2(\R)$ implies $\widehat{C} \in L^2(\R)$ by Plancherel's theorem, and \textit{a fortiori} $|\widehat{C}(x)|^2 \in L^1(\R)$. Thus $|\widehat{C}(x)|^2$ is integrable. 
    
    By the Wiener-Kinchin theorem (Chapter 1.9, \cite{cohen_time-frequency_1995}), $a_C$ is the inverse Fourier transform of the power spectral density of $C$, namely
    \begin{equation}
        a_C(\beta) = \int_{\R}|\widehat{C}(x)|^2 \ifcoef{\beta x}dx.
    \end{equation}
    In particular, $|\widehat{C}(x)|^2$ is the Fourier transform of an integrable function, and thus it is continuous (Theorem 1.13, \cite{folland_course_2016}). 
    
    Now we can apply the Poisson summation formula (Theorem 3.3, \cite{folland_course_2016}) and the scaling property of the Fourier transform to get
    \begin{equation}
        \sum_{\beta \in \Z} a_C(\beta)\fcoef{x\beta/\gamma} = \sum_{m \in \Z}|\widehat{C}(x/\gamma + m)|^2.
    \end{equation}
    This concludes the proof. Notice that the Poisson summation formula can be applied here precisely because $a_C$ has finite support and $|\widehat{C}(x)|^2 \in L^1(\R)$.
\end{proof}

\null

\begin{proof}[Proof of \Cref{thm:Penalty function is strongly D-increasing}]
    Define $\varphi:L^p(\Omega) \rightarrow \R$ such that $\varphi(f) = \|f\|_p^p$. It is easy to prove that $\varphi(f) = \varphi(f_+) + \varphi(f_-)$, where $f_+ = \max \{f, 0\}$ and $f_- = \max\{-f, 0\}$. Suppose that $f \prec g$, then by definition we have $f_+(\omega) < g_+(\omega)$ and $g_-(\omega) < f_-(\omega)$ for almost all $\omega \in \Omega$. Moreover, $f \preceq g$ implies $f - \eta \preceq g - \eta$. Therefore
    \begin{equation*}
        \begin{split}
            s_{\rho, \eta}(g) - s_{\rho, \eta}(f) 
            & = -\varphi(g - \eta) + \rho\varphi((g - \eta)_+) + \varphi(f - \eta) - \rho\varphi((f - \eta)_+) \\
            & = (\rho - 1)\big(\varphi((g - \eta)_+) - \varphi((f - \eta)_+)\big) + \big(\varphi((f - \eta)_-) - \varphi((g - \eta)_-)\big),
        \end{split}
    \end{equation*}
    which is positive, since $\rho > 1$. Thus, $s_{\rho, \eta}$ is strongly $D$-increasing. 
\end{proof}

\null

\begin{proof}[Proof of \Cref{thm:Corollary to penalty function is strongly D-increasing}]
    Fix $\rho > 1$ and suppose that $\eta = z^*$ is a Pareto optimal element. Let $z \in Z$ be such that $s_{\rho, z^*}(z) \leq s_{\rho, z^*}(z^*) = 0$. Then
    \begin{equation*}
        (\rho - 1) \varphi((z - z^*)_+) \leq \varphi((z - z^*)_-).
    \end{equation*}
    
   Suppose that $\varphi((z - z^*)_+) > 0$. Then we can pick $\rho$ such that
    \begin{equation*}
        \rho > 1 + \frac{\varphi((z - z^*)_-)}{\varphi((z - z^*)_+)}.
    \end{equation*}
    In this way, we get
    \begin{equation*}
        \varphi((z - z^*)_-) < (\rho - 1) \varphi((z - z^*)_+) \leq \varphi((z - z^*)_-),
    \end{equation*}
    which is not possible. 
    
    Therefore, it must be $\varphi((z - z^*)_+) = 0$. But then $(z - z^*)_+ = 0$. This means that $z - z^* = -(z - z^*)_-$, and so that $z  \preceq z^*$. But, since $z^*$ is Pareto optimal, we must have $z = z^*$. Thus, $z^*$ is minimal for $s_{\rho, z^*}$, for any $\rho > 1$.
\end{proof}

\section{The Slepian basis}
\label{ch:The Slepian basis}
The following is a free readaptation of some concepts discussed in \cite{slepian_prolate_1961}, and is meant to give the reader a brief introduction to the Slepian basis.

\null

Let $\mathcal{B} \subseteq L^2(\R)$ be the set of squared integrable functions $f(t)$ whose Fourier transform $\widehat{f}(\omega) \in L^2(\R)$ vanishes for $|\omega| > \Omega$, for a fixed $\Omega > 0$. Thus, $\mathcal{B}$ is the set of bandlimited functions, where the band limit is $\Omega$. By definition, if $f \in \mathcal{B}$, then
\begin{equation}
    f(t) = \int_{-\Omega}^\Omega \widehat{f}(\omega) \ifcoef{t\omega} d\omega.
\end{equation}
We define the \textit{bandlimiting operator}
\begin{equation}
    B:L^2(\R) \longrightarrow L^2(\R) \quad\text{s.t.}\quad Bf(t) = \int_{-\Omega}^\Omega \widehat{f}(\omega) \ifcoef{t\omega} d\omega,
\end{equation}
and by definition we have that $B$ is the identity when restricted to $\mathcal{B}$. 

In an analogous way, we can define the set of \textit{timelimited} functions $\mathcal{D}$ as the set of functions $f$ in $L^2(\R)$ such that $f(t)=0$ if $|t| > T/2$, where $T > 0$ is fixed. There is no need to explain why these two operators are important: signals coming from real-world applications are always timelimited, and are usually approximated by bandlimited versions to make them tractable from a computational standpoint. Moreover, bandlimited functions are a very well-studied concept, thanks to ground-breaking results like the Nyquist-Shannon sampling theorem and the Whittaker-Shannon interpolation formula.

Consider now the composition $BD$. We know that $BDf \neq f$, for all $f \neq 0$; otherwise, we would have a function which is both bandlimited and timelimited, in contrast with the uncertainty principle. In general, the composition $BD$ transforms a function $f\in L^2(\R)$ in a member of $\mathcal{B}$ with a smaller norm. For the theory of gridding algorithms, it makes sense to ask for those functions which preserve most of their energy after having been transformed by $BD$. Indeed, the error resulting from a certain gridding kernel depends on the behaviour of the kernel outside the interval $\fov$ (see \Cref{thm:error bound}). In particular, if the amount of energy outside $\fov$ is low, the error should be small \cite{schwab_optimal_1980}. If a function $f$ minimizes the amount of energy outside $[-\Omega, \Omega]$, then maximizes the following Rayleigh quotient:
\begin{equation}
\label{eq:Definition of the Rayleigh quotient}
    \mu = \frac{\| BDf \|_2^2}{\|f\|_2^2} = \frac{\int_{-\infty}^\infty\Big|\int_{-\Omega}^\Omega \widehat{Df}(\omega) \ifcoef{t\omega} d\omega \Big|^2 dt}{\int_{-\infty}^\infty |f(t)|^2 dt}.
\end{equation}

We have
\begin{equation}
    BDf(t) = \int_{-T/2}^{T/2} \rho_\Omega(t - s)f(s) ds,\quad \text{where } \rho_\Omega(\tau) = \frac{\sin(\Omega \tau)}{\pi \tau}.
\end{equation}
Notice that $\rho_\Omega$ is real and symmetric; thus the eigenvalues of its integral operator are real and positive. Therefore, by standard arguments, we can reduce the problem of maximizing \cref{eq:Definition of the Rayleigh quotient} to that of finding the eigenvalues and eigenfunctions of $BD$:
\begin{equation}
    \lambda f(t) = \int_{-T/2}^{T/2} \rho_\Omega(t - s) f(s)ds, \quad |t| \leq T/2.
\end{equation}
Of course, $\mu = \lambda_0$, where $\lambda_0$ is the maximum eigenvalue.

The eigenfunction corresponding to $\lambda_0$ is precisely the first prolate spheroidal wave function. The other PSWFs can be defined recursively by maximizing \cref{eq:Definition of the Rayleigh quotient} over the set of squared integrable functions that do not belong to the span of the previous PSWFs; that is,
\begin{equation}
    \psi_{n+1} = \arg\max_{f \in L^2\setminus\langle\psi_0, \dots, \psi_n \rangle} \frac{\| BDf \|_2^2}{\|f\|_2^2},
\end{equation}
where $\psi_0, \dots, \psi_n$ are the first $n+1$ PSWFs and $\langle\psi_0, \dots, \psi_n \rangle$ denotes their span. By definition, these functions are real, bandlimited and their energy in the time domain is almost completely concentrated in the interval $(-T/2, T/2)$; which is why they are so interesting as gridding kernels. Moreover, it turns out that they have many useful properties. For all $n \geq 0$, denote by $\lambda_n$ the eigenvalue related to $\psi_n$. Then:
\begin{itemize}
    \item[-] we have $1 > \lambda_0 > \lambda_1 > \dots > \lambda_n > \dots$; 
    \item[-] the functions $\psi_n$ are orthonormal on the real line and complete in $\mathcal{B}$;
    \item[-] they also are orthogonal in the interval $(-T/2, T/2)$ and complete in $L^2((-T/2, T/2))$, and
    \begin{equation*}
        \int_{-T/2}^{T/2} \psi_m(t)\psi_n(t) dt = \begin{cases}
            \lambda_n, &\text{if } m = n \\
            0,         &\text{otherwise}
        \end{cases}\quad \text{for all } m, n = 0, 1, 2, ...;
    \end{equation*}
    \item [-] they are a scaled solution of the following differential equation:
    \begin{equation}
        (1-t^2)f'' - 2t f' + (\chi_n - c^2 t^2)f = 0,
    \end{equation}
    where $c = \Omega T/2$ and the eigenvalues $\chi_n$ are defined as the (discrete) set of real values such that the equation has solution in the closed interval $[-1, 1]$. 
\end{itemize}
The set $\{\psi_0, \psi_1, \dots, \psi_n, \dots\}$ is called the Slepian basis. The PSWFs possess many more properties; we refer the reader to the monograph \cite{osipov_prolate_2013} for a detailed and complete discussion.

\section{Algorithms}
The algorithms implemented in this work are detailed below in pseudocode.

\label{ch:Algorithms}
\begin{algorithm}[h]
\small
\caption{Algorithm to compute $\Lambda C$ and $h$}
\label{alg:Lambda and h}
\begin{algorithmic}
\Require the dimension $M$ of the vector of frequencies, the domain parameter $W\in\Z$, $W \geq 1$, the vector $\textbf{C}=(C_n) \in \C^N$ representing a gridding kernel (with $N = 2WD$ and $D \in \Z$). 
\State
\State $dt \gets 1/D$ \Comment{\small Discretization constant}
\State $\textbf{a}=(a_\beta) \gets \mathbf{0}\in\C^{4W - 1}$ \Comment{\small Compute the autocorrelation}
\For{$b = 0, \dots,2W - 1$}
    \For{$n = i\cdot D, \dots, 2WD-1$}
        \State $a_{2W + b} \gets a_{2W + b} + dt \cdot C_n\cdot\overline{C}_{n - i\cdot D}$
    \EndFor
    \If{$b > 0$}
        \State $a_{2W - b} \gets \overline{a}_{2W + b}$
    \EndIf
    \State $b \gets b + 1$
\EndFor
\State
\State $\textbf{x}=(x_m) \gets (-\frac{1}{2}+\frac{m}{M}) \in \R^M, \quad m=0, \dots, M-1$ \Comment{\small Frequency axis}
\State $\textbf{K}=(K_{m\beta}) \gets (\fcoef{x_m\cdot\beta}) \in \C^{M \times (4W - 1)}, \quad \beta = -2W + 1, \dots, 2W - 1$ \Comment{\small Compute the denominator of $\Lambda$} 
\State $\textbf{den} = (\delta_m) \gets \textbf{K}\cdot \textbf{a}$\Comment{\small Matrix-vector product}

\State $\textbf{num}=(\nu_n) \gets (C_n \cdot e^{\pi i\frac{n}{D}}), \quad n =0, \dots, N - 1$\Comment{Compute the numerator of $\Lambda$}
\State $\textbf{num}=(\nu_m) \gets CZT(\textbf{num}, M, e^{-\frac{2 \pi i}{MD}})\in \C^M$ \Comment{Chirp Z-transform with factor $e^{-\frac{2 \pi i}{MD}}$ and padding $M$}
\State $\textbf{num} \gets (|\nu_m \cdot \fcoef{W x_m} \cdot dt|^2),\quad  m = 0, \dots, M-1$

\State $\bm{\Lambda} \textbf{C}= \gets \Big(1 - \frac{\nu_m}{\delta_m}\Big), \quad m = 0, \dots, M-1$
\State $\textbf{h} = (h_m) \gets \Big(\frac{\overline{\nu}_m}{\delta_m}\Big), \quad m = 0, \dots, M-1$
\State \Return $\bm{\Lambda} \textbf{C}, \textbf{h}$
\end{algorithmic}
\end{algorithm}

\begin{algorithm}
\caption{Gridding algorithm}\label{alg:gridding}
\small
\begin{algorithmic}[t!]
\Require the dimension $M$ of the vector of frequencies, the domain parameter $W\in\Z$, $W \geq 1$, two splines $C_\Re$ and $C_\Im$ representing the real and imaginary parts of the gridding kernel in $[-W, W]$, a vector $\textbf{h} = (h_m) \in \C^M$ representing the correcting function, a vector $\textbf{u}=(u_n) \in \C^N$ representing the signal to be Fourier transformed, a vector $\textbf{t}=(t_n) \in \C^N$ representing the nonuniform sampling of $\textbf{u}$. 
\State
\State $\textbf{X} \gets \textbf{0} \in \C^N$ \Comment{Resampled signal}
\For{$n = 0, \dots, N-1$}
    \State $\textbf{p} \gets (\lfloor t_n - W\rfloor +1, \dots, \lfloor t_n + W\rfloor +1)$
    \State $\bm{\tau}= (\tau_k) \gets (p_k - t_n), \quad k=0, \dots, \text{length}(\textbf{p})$
    \State $\textbf{C}^{\text{spline}} \gets \text{Spline}(C_\Re, \bm{\tau}) + i\cdot\text{Spline}(C_\Im, \bm{\tau})$ \Comment{evaluate the cubic splines in $\bm{\tau}$}
    \State $\textbf{p} \gets ((p_k \mod N) + 1), \quad k=0, \dots, \text{length}(\textbf{p})$
    \For{$k = 0, \dots, \text{length}(\textbf{p})-1$}
        \State $X_{p_k} \gets X_{p_k} + u_n \cdot C^{\text{spline}}_k$ \Comment{Resampling}
    \EndFor
\EndFor

\State $\textbf{y} = (y_m) \gets FFT(\textbf{X}, M)$ \Comment{FFT of $X$ with padding factor $M$}
\State $ \textbf{y} \gets (y_m \cdot h_m), \quad m = 0, \dots, M-1$ \Comment{apply apodization}
\State \Return $\textbf{y}$
\end{algorithmic}
\end{algorithm}

\end{document}